\newcommand{\nc}{\newcommand}
\nc{\fix}[1]{\mathcal{F}(#1)}
\nc{\fixx}{\mathcal{F}}
\nc{\cG}{\mathcal{G}}
\nc{\C}{\mathbf{C}}
\nc{\Q}{\mathbf{Q}}
\nc{\Z}{\mathbf{Z}}
\nc{\F}{\mathbf{F}}
\nc{\R}{\mathbf{R}}
\nc{\Qp}{\mathbf{Q}_p}
\nc{\gen}[1]{\langle #1\rangle}
\nc{\ds}{\displaystyle}
\nc{\Oc}{\mathcal{O}}
\nc{\OF}{\Oc_F}
\nc{\OL}{\Oc_L}
\nc{\cL}{\mathcal{L}}
\nc{\cK}{\mathcal{K}}
\nc{\OLg}{\Oc_{L^g}}
\nc{\fP}{\mathfrak{P}}
\nc{\fp}{\mathfrak{p}}
\nc{\fd}{\mathfrak{D}}
\nc{\fr}{\mathfrak{d}}
\nc{\ft}{\mathfrak{t}}
\nc{\fc}{\mathfrak{c}}
\nc{\g}{{\rm gal}}
\nc{\ta}{{\,t}}
\nc{\GL}{\textrm{GL}}
\nc{\PGL}{\textrm{PGL}}
\nc{\PSL}{\textrm{PSL}}
\nc{\SL}{\textrm{SL}}
\nc{\bbar}[1]{\overline{#1}}
\nc{\vd}{\vec{d}\, }
\nc{\into}{\hookrightarrow}
\nc{\cots}{, \ldots, }
\nc{\pots}{+ \cdots +}
\nc{\seq}{\subseteq}
\nc{\note}[1]{\begin{center}\fbox{\textbf{#1}}\end{center}}
\nc{\cmmt}[1]{}
\nc{\bp}{\bullet \!\!\!\! +} 
\nc{\sst}{\scriptstyle}
\nc{\emm}{\em}
\subjclass[2010]{11S15, 11S20, 11R32}
\thanks{This work was partially supported by a grant from the Simons Foundation (\#209472 to David Roberts).}
\DeclareMathOperator{\gal}{Gal}
\DeclareMathOperator{\Hom}{Hom}
\newtheorem{thm}{Theorem}[section]
\newtheorem{prop}[thm]{Proposition}
\newtheorem{lemma}[thm]{Lemma}
\newtheorem{cor}[thm]{Corollary}
\newtheorem{defn}[thm]{Definition}
\numberwithin{table}{section}
\numberwithin{figure}{section}
\numberwithin{equation}{section}
\theoremstyle{remark}
\title[The Tame-Wild Principle]{The Tame-Wild Principle for
  Discriminant Relations for Number Fields} \author{John W.\ Jones}
\address{School of Mathematical and Statistical Sciences, Arizona
  State University, PO Box 871804, Tempe, AZ 85287} \email{jj@asu.edu}
\author{David P.\ Roberts}
\address{Division of Science and Mathematics, University of
  Minnesota-Morris, Morris, MN 56267}
\email{roberts@morris.umn.edu}
\begin{document}
\begin{abstract}
     Consider tuples $(K_1, \dots,K_r)$ of separable algebras
 over a common local or global number field $F$, with the $K_i$
 related to each other by specified resolvent constructions.   
 Under the assumption that all ramification is tame,
 simple group-theoretic calculations give best
 possible divisibility relations among the discriminants
 of $K_i/F$.  We show that for many resolvent constructions, 
 these divisibility
 relations continue to hold even in the presence of
 wild ramification.   
\end{abstract}

\maketitle

\section{Overview}
\label{overview}
     Let $G$ be a finite group and let $\phi_1$,\dots,$\phi_r$ be 
 permutation characters of $G$.  We say that
 a tuple $(K_1,\dots,K_r)$ of separable algebras over
 a common ground field $F$  has type 
 $(G, \phi_1,\dots,\phi_r)$ if for a joint splitting field $K^{\rm gal}$
 one can identify $\gal(K^{\rm gal}/F)$ with a subgroup of $G$
 such that the action of $\gal(K^{\rm gal}/F)$ on $\Hom_F(K_i,K^{\rm gal})$
 has character $\phi_i$.

     When $F$ is a local or global number field, one has discriminants 
  $\fd_{K_i/F}$ which are ideals in the ring of integers $\OF$ of $F$.
  One can ask for the strongest divisibility relations among these
  discriminants which hold as $(K_1,\dots,K_r)/F$ varies 
  over all possibilities of
  a given type $(G, \phi_1,\dots,\phi_r)$.  
  This question has a simple group-theoretic answer if one
  restricts attention to tuples for  which all ramification in each
  $K_i/F$ is tame.  
    
     This paper focuses on the following phenomenon:  {\em for many
 $(G, \phi_1,\dots,\phi_r)$, the divisibility relations for 
 tame $(K_1,\dots,K_r)/F$ of type $(G, \phi_1,\dots,\phi_r)$ hold also for 
 arbitrary $(K_1,\dots,K_r)/F$ of type $(G, \phi_1,\dots, \phi_r)$.}  
 In this case, we say that the tame-wild principle holds for 
 $(G, \phi_1,\dots,\phi_r)$.  Our terminology ``tame-wild
principle'' is intended to be reminiscent of the standard terminology 
``local-global principle'':   we are showing in this paper that simple 
tame computations can often solve a complicated wild problem,
just as simple local computations can often
solve a complicated global problem.  

     Section~\ref{introductory} provides an  
     introductory example. Section~\ref{chartheory}
     reviews some ramification theory centering
     on Artin characters, placing it in a 
     framework which will be convenient for us.  
     Section~\ref{tamewild} states the tame-wild principle and
     gives two simple methods for proving instances 
     of it.  
     
  If the tame-wild principle  holds for a fixed
  $G$ and any $(\phi_1,\dots,\phi_r)$ then we say it holds 
  universally for $G$.      Section~\ref{universalC} proves that the tame-wild principle holds universally
   for $G$ in a small class of groups we call U-groups.
   Section~\ref{universalN} considers the remaining groups, called
   N-groups, finding that the tame-wild principle
   usually does not hold universally for them.  
   
   Sections~\ref{comparing} and \ref{general} return to the more practical
   situation where one is given not only $G$ but
   also a small list of naturally arising $\phi_i$.  
   Our theme is that the tame-wild principle
   is likely to hold, despite the negative 
   results on N-groups.   Section~\ref{comparing}
   focuses on comparing an arbitrary
   algebra $K/F$ with its splitting field $K^{\rm gal}/F$,
   proving that one of the two divisibility statements coming from the tame-wild principle 
   holds for arbitrary $G$.  Section~\ref{general} gives a
   collection of examples exploring the range of $(G,\phi_1,\dots,\phi_r)$ 
   for which the tame-wild principle holds.  
        
     This paper is written with applications to tabulating 
  number fields of small discriminant in mind.  The
   topics in  \S\ref{intronf}, \S\ref{left}, and 
   \S\ref{aff3} all relate to this application.   Moreover,
   as we will make clear, the theory we present
   here still applies when
permutation characters $\phi_i$ are replaced by
  general characters $\chi_i$, and
     discriminants are correspondingly generalized to 
     conductors.   Applications to
     Artin $L$-functions of small conductor will
     be presented elsewhere.  
   
     We thank Nigel Boston and Griff Elder for
  helpful conversations.

\section{An introductory example}
\label{introductory}  
In \S\ref{introtamewild}, we provide an introductory instance of the tame-wild principle
 that we will revisit later to provide simple illustrations of 
 general points.     In \S\ref{intronf}, we illustrate how this instance
of the tame-wild principle gives an indirect but efficient way of 
solving a standard problem in   
tabulating number fields. 

\subsection{The tame-wild principle for $(S_5,\phi_5,\phi_6)$}
\label{introtamewild}   

\subsubsection*{The Cayley-Weber type}
 For our introductory example, we take the type $(S_5,\phi_5,\phi_6)$, 
 where $\phi_5$ is the character of the given degree five
 permutation representation, and $\phi_6$ is the character of the degree 
 six representation $S_5 \stackrel{\sim}{\rightarrow} PGL_2(5) \subset S_6$.  
 A pair of algebras $(K_5,K_6)$ has type $(S_5, \phi_5,\phi_6)$
  exactly when $K_6$ is the Cayley-Weber resolvent, as in e.g.\ \cite[\S2.3]{generic}, of $K_5$. 
  An explicit example 
  over $\Q$ is given by $K_n = \Q[x]/f_n(x)$ with
  \begin{align}
\label{f51} f_5(x) & =  x^5-2 x^4+4 x^3-4 x^2+2 x-4,  & D_5 & = 2^8 3^4 5^1, \\
\label{f61} f_6(x) & =  x^6-2 x^5+4 x^4-4 x^3+2 x^2-4 x-6, & D_6 & = 2^{10} 3^4 5^3.
 \end{align} 
 In this example, the Galois group is all of $S_5$, discriminants $\fd_{K_n/\Q} = (D_n)$ 
 are as indicated, and
 ramification is wild at $2$ and tame at $3$ and $5$.   We are concerned
 with exponent pairs $(a_\fp,b_\fp)$ on discriminants.   Here $(a_2,b_2)=(8,10)$, $(a_3,b_3) = (4,4)$,
 $(a_5,b_5) = (1,3)$, and otherwise $(a_\fp,b_\fp)=(0,0)$.  
  
  \subsubsection*{All possibilities for $(a_\fp,b_\fp)$}  Figure~\ref{fivetosix} 
  gives all nonzero possibilities for $(a_\fp,b_\fp)$ over $\Q$.   The fact that the 
  tame list is complete is immediate from the general formalism of the next section.  
  A brute force proof that the wild list is complete goes as follows:    
  \begin{figure}[htb]
\includegraphics[width=4.5in]{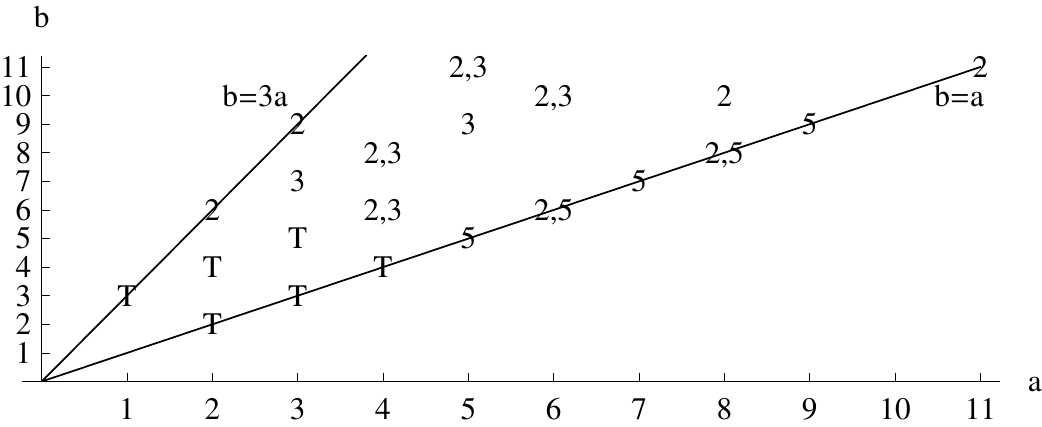}
\caption{\label{fivetosix}  The complete list of pairs $(a,b) \neq
  (0,0)$ which occur as $(a_\fp,b_\fp)$ for $(S_5,\phi_5,\phi_6)$ over
  $\Q$.      The pairs labeled T can occur with tame ramification, 
while the others can only occur for wild $p$-adic ramification as indicated.  }
\end{figure}
there are $113$, $57$, and $51$ quintic algebras $K_5$ over $\Q_p$
for $p=2$, $3$, and $5$ respectively \cite{jr-local-database};  for each, one can compute
$K_6$ and thus the pairs $(a_\fp,b_\fp)$;  the lists arising
are the ones drawn in Figure~\ref{fivetosix}.
For larger number fields $F$, the list of possibilities for 
tame $(a_\fp,b_\fp)$ is exactly the same, but the list of 
possibilities for wild $(a_\fp,b_\fp)$ grows without bound.

\subsubsection*{The tame-wild principle}
Figure~\ref{fivetosix} and the comment about general base fields $F$ 
clearly illustrate two general phenomena
about exponent vectors $(a_{\fp},b_{\fp})$.   First, in absolute terms, 
the exponents can be much larger in wild cases 
than they are in all tame cases.  But second, in relative terms, one can hope that 
the ratios
$a_{\fp}/b_{\fp}$ are quite similar in the wild and tame cases.  
We are interested in this paper only in the second phenomenon
and so we systematically consider ratios. 

In our example, the tame-wild principle is the statement
that 
\begin{equation}
\label{specificbounds}
\frac{1}{3} b_\fp \leq a_\fp \leq b_\fp
\end{equation}
holds for all $(K_5,K_6)/F$ of type $(S_5,\phi_5,\phi_6)$ 
and all primes $\fp$ of $F$.   In other words, when $b_\fp \neq 0$ one
must have $a_\fp/b_\fp \in [1/3,1]$.  
We have summarized a proof that \eqref{specificbounds} holds when one restricts
$F$ to be $\Q$ or one of the $\Q_p$.   We will see by a group-theoretic argument in 
 \S\ref{calculations}, not involving inspecting wild ramification at all, 
that \eqref{specificbounds} holds for general $F$.    However the situation is
subtle, as the analog of \eqref{specificbounds}  
holds for many $(G,\phi_n,\phi_m)$
but not for all.  

\subsection{Application to number field tabulation}
\label{intronf}
 The example of this  section provides a convenient illustration 
 of the application of tame-wild inequalities to number field tabulation.    
 The right inequality of \eqref{specificbounds} globalizes to the divisibility relation
 $\fd_{K_5/F} \mid \fd_{K_6/F}$ which on the level of
  magnitudes becomes
 \begin{equation}
 \label{56norm}
 |\fd_{K_5/F}| \leq |\fd_{K_6/F}|.
 \end{equation}
 Consider the problem of finding sextic field extensions $K_6/F$ with 
Galois group either $PSL_2(5)$ or $PGL_2(5)$.   These all
arise as Cayley-Weber resolvents of $K_5/F$ with 
Galois group either $A_5$ or $S_5$.   From \eqref{56norm}, 
one sees that to find all $K_6/F$ with $|\fd_{K_6/F}| \leq B$ it suffices
to find all $K_5/F$ with $|\fd_{K_5/F}| \leq B$, apply 
the Cayley-Weber resolvent, and keep those $K_6/F$ with $|\fd_{K_6/F}| \leq B$.
 This indirect quintic method is enormously faster
for large $B$, but the direct sextic method over $F=\Q$ was used in \cite{FP} and \cite{FPDH} for the
$PSL_2(5)$ and $PGL_2(5)$ cases respectively.

\section{Character theory and discriminants}
\label{chartheory}
In this section, we review how Artin characters underlie discriminants.  
Each of the subsections introduces concepts and notation which
play an important role in the rest of the paper.   The notions 
we emphasize are slightly different from the most standard
representation-theoretic notions.  However they are appropriate here
because all our characters are rational-valued.   

\subsection{Class sets}  
\label{setting1}
   Let $G$ be a finite group.   We say that two elements of $G$ are {\emm power-conjugate}
if each is conjugate to a power of the other.  Let $G^\sharp$ be the set of 
power-conjugacy classes.    Thus one has a natural surjection 
$G \rightarrow G^\sharp$, with the fiber $C_\sigma \subset G$ 
above $\sigma \in G^\sharp$ being its set of representatives.
   The {\emm order}
$\bar{\sigma}$ of an element
$\sigma \in G^\sharp$ is the order of a representing element $g \in C_\sigma$.  
Similarly the {\emm power} $\sigma^k$ of a class $\sigma$ is the class of ${g}^k$ for any representing 
element ${g} \in C_\sigma$.

   When dealing with explicit examples, we most commonly indicate
an element of $G^\sharp$ by giving its order and an extra identifying label, 
as in e.g.\ $2B$.   To emphasize the role of order, 
we say that a class $\tau$ divides a class $\sigma$ if some power of $\sigma$ is $\tau$.  
Thus divisibility of classes $\tau \mid \sigma$ implies divisibility of
integers $\bar{\tau} \mid \bar{\sigma}$, but not conversely.  In connection
with divisibility, the quantity $\left[ \sigma \right] = |C_\sigma|/\phi(\bar{\sigma})$ 
 is useful, 
with $\phi(n) = |(\Z/n)^\times|$ the Euler $\phi$-function.  This quantity
is integral because  $C_\sigma$ consists of $\left[ \sigma \right]$ power-classes,
 each of size $\phi(\bar{\sigma})$.   Alternatively, one
 can think of $G^\sharp$ has indexing conjugacy classes
 of cyclic subgroups of $G$, and then $[\sigma]$ is the 
 number of cyclic subgroups of type $\sigma$.  

Sections~\ref{universalC} and \ref{universalN} systematically
reason with class sets using diagrams based on the divisibility
relation and the quantities $\left[ \sigma \right]$.   
In general, $G$ itself often recedes into the background of our considerations
and the focus is on $G^\sharp$ and its inherited structures.

\subsection{Characters}
\label{setting2}
Our calculations take place mainly in the ring $\Q(G^\sharp)$ of $\Q$-valued
functions on $G^\sharp$.    We write everything out instead using the larger ring
$\R(G^\sharp)$ of real-valued functions, so that we can use standard
terms like cone, hull, and interval with their usual meaning.  
  We make extensive use of the natural 
inner product on $\Q(G^\sharp)$, given by
\[
(f_1,f_2) = \sum_{\sigma \in G^\sharp} \frac{|C_\sigma|}{|G|} f_1(\sigma) f_2(\sigma).
\]
Important elements in  $\Q(G^\sharp)$ for us
include the characters $\phi_X$ of $G$-sets $X$.  
By definition, these characters are obtained
by counting fixed points: $\phi_X(\sigma) = |X^g|$,
for $g$ any representative of $\sigma$.   
Both the identity class $e \in G^\sharp$ and the constant 
function $1 \in \Q(G^\sharp)$ usually play trivial 
roles in our situation.
To efficiently remove these quantities from our attention, 
we define $G^{\sharp 0} = G^\sharp -\{e\}$ and let 
$\Q(G^\sharp)^0 \subset \Q(G^\sharp)$ be the 
orthogonal complement to $1$.   

\subsubsection*{The characters $\phi_{G/H}$ and $a_H$} 
Let $H$ be a subgroup of $G$.  Then the character of
the $G$-set $G/H$ is given by 
\begin{equation}
\label{permchar}
\phi_{G/H}(\sigma) = \frac{|G| |C_\sigma \cap H| }{|H| |C_\sigma|}.
\end{equation}
Taking $H=\{e\}$ gives the regular
character $\phi_G$ with value $|G|$ at $e$ and
$0$ elsewhere.   We define the formal Artin character 
of $H$ to then be the difference
\begin{equation}
\label{formalartin}
a_H = \phi_{G}-\phi_{G/H},
\end{equation}
which lies in $\Q(G^\sharp)^0$.  
Here we use the adjective ``formal'' because often
one talks about Artin characters only in the presence
of fields, while currently we are in a purely
group-theoretic setting.

\subsubsection*{The case that $H$ is cyclic}  
The case where $H$ is cyclic is particularly important to us.  
The generators of $H$ all represent the same class
$\tau \in G^\sharp$ and we use the alternative notation  
$a_\tau = a_H$, calling the $a_\tau$ tame characters for reasons
which will be clear shortly in \S\ref{artinchars}.

 To study the $a_\tau$ explicitly,  it is convenient to make use of
  what we call
 {\emm precharacters} $\hat{a}_\tau$, for $\tau \in G^\sharp$.  
By definition, $\hat{a}_e$ is the $0$ function and 
otherwise $\hat{a}_\tau$ has two nonzero values:
 \begin{align} 
 \label{ahatchar}
\hat{a}_\tau(e) & = |G|, &  \hat{a}_\tau(\tau) & = -\frac{|G|}{|C_\tau|}.
  \end{align}
Tame characters and precharacters are related to each other via
\begin{align}
   \label{aandahat} a_\tau & =  \sum_{k \mid \bar{\tau}} \frac{\phi(\bar{\tau}/k)} {\bar{\tau}} \hat{a}_{\tau^k},  &
   \hat{a}_\tau  & =
    \sum_{k \mid \bar{\tau}} \frac{\bar{\tau} \mu(k)}{\phi(\bar{\tau}) k} a_{\tau^k},
\end{align}
where $\mu$ is the M\"obius $\mu$-functions taking values in $\{-1,0,1\}$.
Thus, $a_e = \hat{a}_e = 0$, 
$a_\tau = \frac{\bar{\tau}-1}{\bar{\tau}} \hat{a}_{\tau}$ if $\bar{\tau}$ is
prime, and otherwise $a_\tau$ and $\hat{a}_\tau$ are not proportional to 
each other.  As $\tau$ ranges over $G^{\sharp 0}$, the $\hat{a}_\tau$ clearly
form a basis for $\Q(G^\sharp)^0$.  So the
$a_\tau$ also form a basis for $\Q(G^\sharp)^0$

\subsection{Artin characters} 
\label{artinchars}
Let $F$ be a local or global number field.
 Let $L/F$ be a Galois extension with Galois group  identified with a subgroup of $G$.  
A permutation representation $\rho$ of $G$ gives an $F$-algebra $K$ 
split by $L$.   For $\fp$ a prime ideal of $F$, the discriminant exponent $c_\fp(K)$ depends 
only on the character $\phi \in \Q(G^\sharp)$ of $\rho$ and in fact 
depends linearly on $\phi$.  The associated Artin character $a_{L/F,\fp}$ is the unique
element of  $\Q(G^\sharp)$ such that one has the general formula 
\begin{equation}
c_\fp(K) = (a_{L/F,\fp},\phi).
\end{equation}
From $c_\fp(F)=0$, one gets $(a_{L/F,\fp},1)=0$ and so $a_{L/F,\fp} \in \Q(G^\sharp)^0$.  
One can completely compute $a_{L/F,\fp}$ by computing
$c_\fp(K)$ for any $|G^{\sharp0}|$ different $K$ having characters
which are linearly
independent in $\Q(G^\sharp)/\Q$.    

Before continuing, we note a subtlety that disappears in
the Artin character formalism that we are reviewing.  Namely
it can happen that non-isomorphic algebras $K'$ and $K''$ give rise 
to the same permutation character $\phi$.  In this case $K'$ and $K''$ 
are called arithmetically equivalent.  
They are indeed equivalent from the point of view of this paper, and any occurrence of $K'$ can
simply be replaced by $K''$.

 An Artin conductor $a_{L/F,\fp}$ can be expressed directly in terms 
of inertia groups in their upper numbering as follows.  
  Let $\fP$ be 
a prime of $L$ above $\fp$ and let $I_{L/F,\fP} \subseteq \gal(L/F) \subseteq G$ be the corresponding
inertia group.  Then one has rational numbers $1\leq s_1 < s_2 < \cdots < s_k$
and normal subgroups   
 \begin{equation}
 \label{inertialchain}
 I_{L/F,\fP} =  I^{s_1} \supset I^{s_2} \supset \cdots \supset I^{s_k}  \supset  \{e\}
 \end{equation}
 satisfying
 \begin{equation}
 \label{Iexpand}
 a_{L/F,\fp} = \sum_{i=1}^k (s_{i}-s_{i-1}) a_{I^{s_i}}.
 \end{equation} 
 Here, for the sake of the conciseness of formulas, we put $s_0=0$.   As a similar
 convention, we put $I^{s_{k+1}}=\{e\}$.   The upper numbers
 $s_i$ we are using here are called {\em slopes} in \cite{jr-local-database} and
 are designed to capture tame and 
 wild ramification simultaneously; one has $s_i=u_i+1$ where
 the $u_i$ are the upper numbers used in the standard reference \cite{serre-CL}.

     If $s_1=1$ then $I^{s_1}/I^{s_2}$ is cyclic of order prime to $p$.  Otherwise,
 all the $I^{s_u}/I^{s_{u+1}}$ are abelian groups of exponent $p$.   
 In particular, $I_{L/F,\fP}$ itself is a {\emm $p$-inertial} group in the
 sense that it is an extension of a prime-to-$p$ cyclic group
 by a $p$-group.   In general, we say that a group is {\emm inertial}
 if it is $p$-inertial for some prime $p$.

    The prime $\fp$ is unramified in $L/F$ if and only if $k=0$ in which case 
$a_{L/F,\fp}$ is zero.  The cases where $\fp$ is ramified but only tamely
 are those with $k=1$ and $s_1=1$.
In both these two settings, \eqref{Iexpand} becomes $a_{L/F,\fp} = a_\tau$ 
with $\tau$ being the class of any generator of any $I_{L/F,\fP}$.   Thus
the tame characters of the previous subsection are exactly the
Artin characters which arise when ramification is tame.

\subsection{Bounds on Artin characters}
\label{bounds}
Define cones in $\R(G^\sharp)^0$ spanned by characters or precharacters as follows:
\begin{alignat*}{2}
\mbox{the tame cone } && T_+(G) & = \langle a_\tau \rangle, \\
\mbox{the wild cone } && W_+(G) & = \langle a_{L/F,\fp} \rangle, \\
\mbox{the inertial cone } && \tilde{T}_+(G) & = \langle a_I \rangle, \\
\mbox{the broad cone } && \hat{T}_+(G) & = \langle \hat{a}_\tau \rangle. 
\end{alignat*}
The tame and broad cones are the simplest of these objects, as
their generators are indexed by the small and explicit set $G^{\sharp 0}$.  
The inertial cone is also a purely group-theoretic 
object, although now more complicated as its generators
are indexed by conjugacy classes of inertial subgroups.  Finally
$W_+(G)$ is much more complicated in nature: its definition
depends on the theory of $p$-adic fields, with
$a_{L/F,\fp}$ running over all possible Artin characters,
as above.  

Our considerations in this section have established the following
inclusions:
\begin{equation}
\label{mainchain}
T_+(G) \subseteq W_+(G) \subseteq \tilde{T}_+(G) \subseteq \hat{T}_+(G).
\end{equation}
The first inclusion holds because tame
characters are special cases of Artin characters, the second by the
expansion \eqref{Iexpand}, and the third because 
all $a_I$ take only positive values on $G^{\sharp 0}$.

 \section{The tame-wild principle} 
 \label{tamewild}
We begin in \S\ref{abstract} by giving a formulation of the tame-wild principle in a somewhat abstract
 context, so that its motivation and structural features can be seen 
 clearly.  Next, \S\ref{proofmethods} observes that the  bounds from the previous section give
 techniques for group-theoretically proving instances of the tame-wild principle.
Finally, \S\ref{calculations} details one way of introducing coordinates
  to render everything explicit and  
  \S\ref{alternative} sketches
  alternative approaches. 

 \subsection{Abstract formulation} 
 \label{abstract} We seek settings
 where general ramification is governed by tame ramification. 
 The statement that equality holds in $T_+(G) \subseteq W_+(G)$ is
 true for some $G$, in which case it is the ideal statement. 
  For general $G$, we seek weaker statements 
 in the same spirit.  
Accordingly, consider 
the orthogonal projection
$a \mapsto a^V$ from $\R(G^\sharp)$ onto an arbitrary 
subspace $V \subseteq \R(G^\sharp)$.  
Let $T_+(G,V)$, $W_+(G,V)$, $\tilde{T}_+(G,V)$,
and $\hat{T}(G,V)$ be the images of 
 $T_+(G)$, $W_+(G)$, $\tilde{T}_+(G)$,
and $\hat{T}(G)$ respectively.  

\begin{defn}
Let $G$ be a finite group and let $V \subseteq \R(G^\sharp)$ 
be a subspace.   If equality holds in $T_+(G,V) \subseteq W_+(G,V)$,
then we say the tame-wild principle holds for $(G,V)$.  
\end{defn}

\noindent  As $V$ gets larger, the tame-wild
principle for $(G,V)$ becomes a stronger statement.  If it holds when
$V$ is all of  $\R(G^\sharp)$, then we say it holds
universally for $G$.  
 
An important aspect of our formalism is as follows.  Given
$(G,V)$, consider inertial subgroups $I$ of $G$.  For each $I$, 
one has the subspace $V_I \subseteq \R(I^\sharp)$ 
consisting of pullbacks of functions in $V$ under
the natural map $I^\sharp \rightarrow G^\sharp$.  
Then the tame-wild
principle holds for $(G,V)$ if and only if 
it holds for all
$(I,V_I)$.   In fact, while $G$ typically arises as a global
Galois group in our applications, 
whether or not the tame-wild principle holds
for $(G,V)$ is purely a question about local
Galois extensions. 
 
\subsection{Two proof methods} 
\label{proofmethods}  Projection turns the chain \eqref{mainchain}
into a chain of cones in $V$:
\begin{equation}
\label{mainchain2}
T_+(G,V) \subseteq W_+(G,V) \subseteq \tilde{T}_+(G,V) \subseteq \hat{T}_+(G,V).
\end{equation}
As we will see, for many $G$ all three inclusions are strict in the universal
case $V = \R(G^{\sharp})$.  However strict inclusions can
 easily become equalities after projection, giving us 
 elementary but quite effective proof techniques.  
 Namely the {\em broad method} for proving
 that the tame-wild principle holds for $(G,V)$ is 
 to show that equality holds in  
 $T_+(G,V) \subseteq \hat{T}_+(G,V)$. 
 The {\em inertial method} is 
 to show that equality holds in
  $T_+(G,V) \subseteq \tilde{T}_+(G,V)$.
  
  Applying the broad method gives the following
  simple result which we highlight because of
  its wide applicability:
\begin{thm} \label{broadprime}  
 Let $G$ be a finite group and let $V$ be
a subspace of $\R(G^\sharp)$.    Suppose 
that the broad cone $\hat{T}_+(G,V)$ is
generated by the $\hat{a}^V_\tau$ with $\tau$ of prime order. 
Then $T_+(G,V) = \hat{T}_+(G,V)$ and the
tame-wild principle holds for $(G,V)$.  
 \end{thm}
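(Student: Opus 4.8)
The plan is to unpack the hypothesis and chase it through the chain \eqref{mainchain2}. Recall from \S\ref{setting2} that when $\bar{\tau}$ is prime one has $a_\tau = \frac{\bar{\tau}-1}{\bar{\tau}} \hat{a}_\tau$, so $a_\tau$ and $\hat{a}_\tau$ are positive scalar multiples of one another, and this relation is preserved by the linear projection $a \mapsto a^V$. Hence $\langle \hat{a}^V_\tau : \bar{\tau} \text{ prime}\rangle = \langle a^V_\tau : \bar{\tau} \text{ prime}\rangle$. The hypothesis says the left side equals $\hat{T}_+(G,V)$; the right side is visibly contained in $T_+(G,V) = \langle a^V_\tau : \tau \in G^{\sharp0}\rangle$. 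Therefore $\hat{T}_+(G,V) \subseteq T_+(G,V)$.

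For the reverse containment, I would invoke the chain \eqref{mainchain2} itself: $T_+(G,V) \subseteq W_+(G,V) \subseteq \tilde{T}_+(G,V) \subseteq \hat{T}_+(G,V)$, which was obtained by projecting \eqref{mainchain}. Combining this with the inclusion just established gives a string of inclusions that closes up into a cycle, forcing all four cones to coincide: in particular $T_+(G,V) = \hat{T}_+(G,V)$, and, reading off the middle of the chain, $T_+(G,V) = W_+(G,V)$. The latter equality is exactly the statement that the tame-wild principle holds for $(G,V)$, by the definition in \S\ref{abstract}.

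Since every step is a one-line observation, there is no real obstacle; the only point requiring a moment's care is the first one, namely that proportionality of $a_\tau$ and $\hat{a}_\tau$ for prime-order $\tau$ survives projection and, crucially, that the scalar $\frac{\bar{\tau}-1}{\bar{\tau}}$ is \emph{positive}, so that the two rays $\langle a^V_\tau\rangle$ and $\langle \hat{a}^V_\tau\rangle$ are literally equal as subsets of $V$ rather than merely spanning the same line. Everything else is bookkeeping with the already-established containments.
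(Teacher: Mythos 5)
Your proof is correct and follows essentially the same route as the paper's: use the proportionality $\hat{a}_\tau = \frac{\bar{\tau}}{\bar{\tau}-1} a_\tau$ for prime-order $\tau$ to get $\hat{T}_+(G,V) \subseteq T_+(G,V)$, then combine with the chain \eqref{mainchain2} to collapse all four cones to equality.
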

 \begin{proof} For $\tau$ of prime order one has $\hat{a}_{\tau} =  \frac{\bar{\tau}}{\bar{\tau}-1} a_\tau$ 
 as noted after \eqref{aandahat}.  Thus $\hat{T}_+(G,V)$ is contained
 in $T(G,V)$ and so all four sets in \eqref{mainchain2} are the same.   
 \end{proof}

\noindent  In general, the broad method is very easy to apply, 
while the harder inertial method can work when the broad method
does not.

\subsection{Calculations with permutation characters}  
\label{calculations}
Let $\phi_1$, \dots, $\phi_r$ be permutation characters spanning $V$.
  Then we are exactly in  the situation described
in the introduction, and in this subsection we describe how one approaches
the tame-wild principle in this particular coordinatization.   We incorporate
the $\phi_i$ into our notation in straightforward ways, for example by
writing $(G,\phi_1,\dots,\phi_r)$ rather than $(G,V)$.  

Throughout this subsection,
we illustrate the generalities by returning to the introductory
example with $G = S_5$ and 
$V = \langle \phi_5,\phi_6 \rangle$.    The very simple
two-dimensional picture of $V$ in Figure~\ref{fivetosix} 
serves as an adequate model for 
mental images of the general situation.   In particular,
we always think of the $a^V_\tau$, $a_{L/F,\fp}$, $a^V_I$, 
and $\hat{a}_\tau$ as in the drawn 
$V$.   We think of our various cones in the drawn $V$ as well.
On the other hand, it is not useful to draw the $\phi_i$ in
on these pictures.  Rather, via the identification of 
$V$ with its dual by the inner product, we think
of the $\phi_i$ as coordinate functions on the
drawn $V$.  

\subsubsection*{Conductor vectors}
     The space $V$ is identified with a subspace of $\R^r$, viewed but not always written as column vectors, 
 via $v \mapsto (c_1,\dots,c_r)$
with $c_i = (v,\phi_i)$.  For example, an Artin character $a^V_{L/F,\fp}$ becomes 
a vector of conductors as in the introduction:
\[
c_{L/F,\fp} = (c_\fp(K_1),\dots,c_\fp(K_r)).
\]
The main case is when the $\phi_i$ are linearly independent, so that $V$ is all of 
$\R^r$.   One can always work in this main case by picking a basis 
from among the $\phi_i$.  

\subsubsection*{Various matrices} 
Our approach to calculations centers on matrices.  
The $r$-by-$G^{\sharp 0}$ {\emm partition
matrix} $P(G,\phi_1,\dots,\phi_r)$
has $i$-$\tau$ entry the cycle type $\lambda_\tau(\phi_i)$ of
$\rho_i(g)$, where $\rho_i$ is a permutation 
representation with character $\phi_i$ and $g \in G$
represents $\tau$.     Thus, 
\begin{equation}
  \label{partmat56}
  P(S_5,\phi_5,\phi_6) = 
  \left(
  \begin{array}{llllll}
  2111 &221&311& 41 & 5 & 32 \\
  222&2211&33& 411 & 51 & 6 \\
  \end{array}
  \right).
  \end{equation}
Partition matrices are purely group-theoretic objects, but one can use fields
in a standard way to help construct them.  For example, the columns from
left to right are the partitions obtained by factoring the pair $(f_5(x),f_6(x))$ from
\eqref{f51}-\eqref{f61} modulo
the primes 67, 211, 31, 13, 11, and 7 respectively.

One passes to the {\emm tame matrix}
$T(G,\phi_1,\dots,\phi_r)$ by replacing each partition
$\lambda_\tau(\phi_i)$ by its conductor $c_\tau(\phi_i) = (a_\tau,\phi_i)$,
 thus
its degree minus its number of parts. Thus
 \begin{equation}
  \label{tamemat56}
  T(S_5,\phi_5,\phi_6) = 
  \left(
  \begin{array}{llllll}
1 &2&2&3 & 4& 3 \\
3 &2&4&3 & 4 & 5 \\
  \end{array}
  \right).
  \end{equation}
The {\emm broad matrix} $\hat{T}(G,\phi_1,\dots,\phi_r)$ 
consists of what we call {\emm preconductors},  the preconductor
 $\hat{c}_\tau(\phi_i) = (\hat{a}_\tau,\phi_i)$ 
being the degree of $\lambda_\tau(\phi_i)$ 
minus its number of ones.  Thus
 \begin{equation}
  \label{pretamemat56}
  \hat{T}(S_5,\phi_5,\phi_6) = 
  \left(
  \begin{array}{llllll}
2 &4&3&4& 5& 5 \\
6 &4&6&4 & 5 & 6 \\
  \end{array}
  \right).
  \end{equation}
 {\emm Inertial matrices} $\tilde{T}(G,\phi_1,\dots,\phi_r)$ 
 typically have more columns, because
 columns are indexed by conjugacy classes
 of inertial subgroups $I$.  But an entry is just the
 {\emm formal conductor} $c_I(\phi_i) = (a_I,\phi_i)$, this 
 being the degree of $\rho_i$ minus the 
 number of orbits of $\rho_i(I)$, just as in 
 the cyclic case.    The cones
 $T_+ \subseteq \tilde{T}_+ \subseteq \hat{T}_+ \subset \R^r$ 
 are then generated by the columns of the corresponding
 matrices $T$, $\tilde{T}$, and $\hat{T}$. 
 
 \subsubsection*{Inclusions $a^V \in T_+(G,V)$ in matrix terms} 
 By dropping rows, we can assume that $\phi_1,\dots,\phi_r$ spans $V$ 
 and so $T = T(G,\phi_1,\dots,\phi_r)$ has full rank $r$, as discussed above.  
 In general,  Let $c \in \R^r$ be a column
 $r$-vector.    For each $r$-element subset $J \subseteq G^{\sharp 0}$ for which
 the corresponding minor $T(J)$ is invertible, let $u(J) = (u(J)_\tau)_{\tau \in J}$ be the vector 
 $T(J)^{-1} c$.  Then $c = \sum_{\tau \in J} u(J)_\tau T_\tau$, with $T_\tau$ the $\tau^{\rm th}$ column
 of $T$.   Then $c$ is in the tame cone $T_+$ if and only if
 there exists such a $J$ with $u(J)_\tau \geq 0$ for all $\tau \in J$.  
 
 To prove that tame-wild holds for $(G,\phi_1,\dots,\phi_r)$ directly, one
 would have to show this positivity condition holds for all conductor
 vectors $c_{L/F,\fp}$.  To show it via the inertial method, one has to show
 that it holds for all formal conductor vectors $c_I$.  To show it
 holds via the broad method, one has to show that it holds for 
 all preconductor vectors $\hat{c}_\tau$.   
  
 \subsubsection*{Projectivization}  
 In the introductory example, we emphasized
 taking ratios of conductors, thereby removing the phenomenon that wild conductors 
 are typically much larger than tame conductors, but keeping the 
 phenomenon we are interested in.  We can do this in the general case as well,
 assuming without loss of generality that $\phi_r$ comes from a faithful
 permutation representation so that the conductors $c_\tau(\phi_r)$ are strictly positive for all
 $\tau \in G^{\sharp 0}$.     We projectivize
 $c = (c_1,\dots,c_r)$ to $c' = (c_1',\dots,c_{r-1}')$ with $c_i' = c_i/c_r$.  
 
 Applying this projectivization process to columns gives the 
 projective tame, inertial, and broad matrices
 respectively, each notationally indicated by a ${}'$. 
 In our continuing introductory example, one has, very simply,
 \begin{eqnarray}
 \label{tprime} T'(S_5,\phi_5,\phi_6) & = & \left( \begin{array}{cccccc} 1/3 & 1 & 1/2 & 1 & 1 & 3/5 \end{array} \right), \\
\label{hattprime}  \hat{T}'(S_5,\phi_5,\phi_6) & = & \left( \begin{array}{cccccc} 1/3 & 1 & 1/2 & 1 & 1 & 5/6 \end{array} \right).
 \end{eqnarray}
 In general, the $\tau$-columns of $T'(G,\phi_1,\dots,\phi_r)$ and 
 $\hat{T}'(G,\phi_1,\dots,\phi_r)$ agree if $\tau$ has prime order.   Here
 they disagree only in the last column corresponding to the composite 
 order $6$.  
 
 Let $T'_+(G,\phi_1,\dots,\phi_r)$ be the convex hull of the columns of $T'(G,\phi_1,\dots,\phi_r)$
 and define $W'_+(G,\phi_1,\dots,\phi_r)$, $\tilde{T}'_+(G,\phi_1,\dots,\phi_r)$ and $\hat{T}'_+(G,\phi_1,\dots,\phi_r)$ 
 to be the analogous hulls.   Then \eqref{mainchain2} has its obvious analog
 at the level of hulls, and one can think about the broad method and the inertial method
 at this level.  In the introductory
 example, \eqref{tprime} and \eqref{hattprime} say that
 $T_+'(S_5,\phi_5,\phi_6)  \subseteq \hat{T}_+'(S_5,\phi_5,\phi_6)$ is
 equality because both sides are $[1/3,1]$.  Thus the tame-wild principle
 holds for $(S_5,\phi_5,\phi_6)$.  
  
 The drop in dimension from $r$ to $r-1$ has a number of advantages.  As illustrated already by 
 \eqref{tprime}-\eqref{hattprime},
 it renders the $r=2$ case extremely concrete.  As we will illustrate in \S\ref{tauandI}, it renders the
 $r=3$ case highly visible.   In general, it lets one determine the truth of $a^V \in T'_+(G,\phi_1,\dots,\phi_r)$ 
 by computation with $(r-1)$-by-$(r-1)$ minors rather than $r$-by-$r$ minors.  
 
\subsection{Alternative approaches}  
\label{alternative} 
Our abstract formulation of the tame-wild principle is designed 
to be very flexible.  For example, say that a vector $v \in \R(G^\sharp)$
is {\emm bad} if $(a_\tau,v) \geq 0$ for all $\tau \in G^{\sharp 0}$ but
$(a_{L/F,\fp},v)<0$ for some Artin character $a_{L/F,\fp}$.  
The bad vectors form a union of cones in $\R(G^\sharp)$
and the tame-wild principle holds for $(G,V)$ if and only if $V$
misses all these cones.    In this sense, the one-dimensional
$V$ spanned by bad $v$ are essential cases, but
these $V$ are never spanned by permutation 
characters.  

Sections~\ref{universalC} and \ref{universalN} are in the universal setting 
$V = \R(G^\sharp)$ and we do not use $\phi_i$ at all.
Sections~\ref{comparing} and \ref{general} 
 return to the permutation
character setting described in \S\ref{calculations}.  
In general, the systematic
study of the tame-wild principle for a
fixed $G$ and varying $V$ 
would be facilitated by the canonical 
basis of $\Q(G^\sharp)$ given by
irreducible rational characters.

\section{The universal tame-wild principle holds for U-groups}
\label{universalC} In
\S\ref{gsharp}, we present a diagrammatic way of understanding class sets $G^{\sharp}$.
Making use of this viewpoint, \S\ref{expansionformal} gives the canonical expansion of 
a formal Artin character $a_I$ as a sum of tame characters $a_\tau$.  
Next, \S\ref{applying} introduces the notion of U-group and proves that
the universal tame-wild principle holds for U-groups.    
However the class of U-groups is quite small, as discussed in 
\S\ref{Cclass}.

\subsection{Divisibility posets} 
\label{gsharp}
    For $G$ a finite group, the set $G^\sharp$ is naturally a partially ordered set via the divisibility relation.    
 We draw this divisibility poset in the standard way with an edge from $\sigma$ down
 to $\tau$ of vertical length one if $\sigma^p=\tau$ for some prime $p$.   With notations as in 
 \S\ref{setting1}, the natural weight $d(\tau,\sigma) = \left[\sigma\right]/\left[\tau\right]$
 plays an important role, and we write it next to the edge whenever it
 is different from $1$, considering this data as part of the divisibility poset.      
 
     The product of the edge weights from any vertex $\sigma$ down to 
another $\tau$ is path-independent, being in fact just $d(\tau,\sigma) = \left[\sigma\right]/\left[\tau\right]$.  
Define integers $u_{G,\sigma}$ via
\begin{equation}
\label{dsum}
\sum_{\tau|\sigma} d(\tau,\sigma) u_{G,\sigma} = 1.
\end{equation}
Thus $u_{G,\tau} = 1$ for maximal $\tau$ and all the integers $u_{G,\tau}$ can be
computed by downwards induction on the divisibility poset $G^\sharp$.   
 
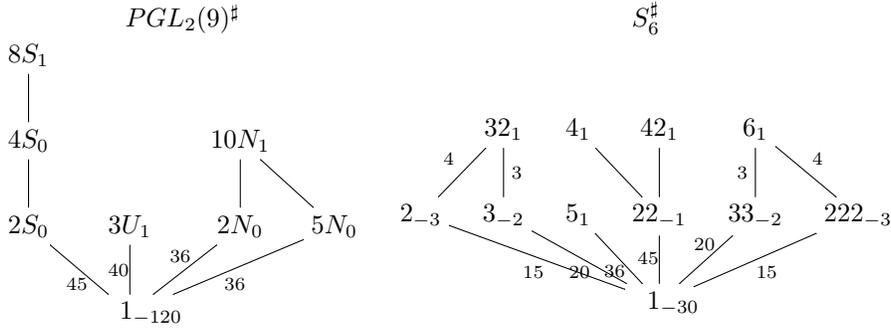
\begin{figure}[htb]
\[
\begin{array}{cc}
PGL_2(9)^\sharp & S_6^\sharp \\
\xymatrix@R=1.5pc@C=.8pc{
8S_1 \ar@{-}[d] \\
4 S_0 \ar@{-}[d]  & &  10N_1 \ar@{-}[d]  \ar@{-}[dr] \\
2S_0 \ar@{-}[dr]_{45 \!\!\!\!} & 3U_1\ar@{-}[d]_{40 \!\!}  & 2N_0 \ar@{-}[dl]_{36\!\!\!} &  5 N_0 \ar@{-}[dll]^{\!\!36}  \\
& \;\;\;\;1_{-120} \!\!\!&& 
}
&
\xymatrix@R=1.5pc@C=.8pc{
&\\
& 32_1 \ar@{-}[dl]_4 \ar@{-}[d]^3 & 4_1 \ar@{-}[dr]& 42_1 \ar@{-}[d]& 6_1 \ar@{-}[d]_3 \ar@{-}[dr]^4 & \\
2_{-3} \ar@{-}[drrr]_{15} & 3_{-2} \ar@{-}[drr]_{20 \!\!\!} & 5_1 \ar@{-}[dr]_{36\!\!\!\!} & 22_{-1} \ar@{-}[d]_{45\!\!} & 33_{-2}
 \ar@{-}[dl]_{20 \!\!\!\!} & 222_{-3} \ar@{-}[dll]^{15} \\
&&& \;\; 1_{-30} \!\!\! &&\\
}
\end{array}
\]
\caption{\label{classspaces} Two divisibility posets $G^{\sharp}$ with
$u_{G,\tau}$ subscripted on $\tau \in G^{\sharp}$.  }
\end{figure}

    Figure~\ref{classspaces} draws the divisibility posets 
    $PGL_2(9)^\sharp$ and $S_6^\sharp$, with
 each $\tau$ subscripted by its $u_{G,\tau}$.       The case
 $PGL_2(9)$ represents the general case $PGL_2(p^f)$,
 with split-torus classes indexed by non-unital divisors of 
 $p^f-1$, a unipotent class $pU$, non-split-torus classes
 indexed by non-unital divisors of $p^f+1$, and finally the identity
 class $1$.   The case $S_6$ represents the general case
 $S_n$, where classes are indexed by partitions of $n$, with
 $1$'s usually left unprinted. 

 In general, the largest edge weights on divisibility posets
 $G^\sharp$ tend to be on edges incident on the identity
 class.  These edges do not play an important role for us 
 and in the sequel we work instead with 
 the divisibility poset associated to $G^{\sharp0}$.  
 
\subsection{Expansion of formal Artin characters}
\label{expansionformal}
Divisibility posets for inertial groups $I$ and the associated integers
$u_{I,\sigma}$ are important to us because of the role they play in the following lemma.
\begin{lemma}
\label{expansionformallemma}
Let $G$ be a finite group, let $I$ be a subgroup, and let $i : I^\sharp \rightarrow G^\sharp$ be the induced map.  
Then the expansion of the formal Artin character $a_I \in \Q(G^\sharp)^0$ in 
the basis $\{a_\tau\}_{\tau \in G^{\sharp 0}}$ can be read off from the divisibility 
poset $I^{\sharp0}$ via the formula
\begin{equation}
\label{aIexpand}
a_I = \frac{1}{|I|} \sum_{\sigma \in I^{\sharp 0}} u_{I,\sigma} [\sigma] \bar{\sigma} a_{i(\sigma)}.
\end{equation}
\end{lemma}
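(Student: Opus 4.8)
The plan is to prove the expansion formula \eqref{aIexpand} by reducing it, via linearity, to the known precharacter identities \eqref{aandahat}, and to organize the bookkeeping with the combinatorial identity \eqref{dsum} defining the integers $u_{I,\sigma}$. The starting point is that both sides of \eqref{aIexpand} lie in $\Q(G^\sharp)^0$, which has the $\{a_\tau\}_{\tau \in G^{\sharp 0}}$ as a basis, so it suffices to check that the coefficient expansions agree. Rather than work directly with the $a_\tau$, I would first re-express everything in terms of the precharacters $\hat a_\tau$, whose values are given explicitly by \eqref{ahatchar}: $\hat a_\tau$ is supported on $\{e, \tau\}$ with $\hat a_\tau(e) = |G|$ and $\hat a_\tau(\tau) = -|G|/|C_\tau|$. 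The conversion between the $\{a_\tau\}$ and $\{\hat a_\tau\}$ bases is the M\"obius-type pair of formulas in \eqref{aandahat}, so a coefficient identity in one basis transfers to the other.

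First I would compute the left-hand side $a_I$ in the precharacter basis. By \eqref{formalartin}, $a_I = \phi_G - \phi_{G/I}$, and using \eqref{permchar} one has $a_I(\sigma) = -\,\phi_{G/I}(\sigma)$ for $\sigma \neq e$ (with the regular character killing everything off the identity) and $a_I(e) = |G| - |G|/|I| = |G|(|I|-1)/|I|$. The key observation is that for $\sigma \in G^{\sharp 0}$, $a_I(\sigma)$ depends only on which classes of $I^\sharp$ map to $\sigma$ under $i$: concretely $a_I(\sigma) = -\frac{|G|}{|I|}\,|C_\sigma \cap I|/|C_\sigma|\cdot|C_\sigma|$ unwound to $-\frac{|G|}{|I|}|C_\sigma^I|$ where $C_\sigma^I$ is the set of elements of $I$ lying in the $G$-class $\sigma$; in particular $a_I$ is ``pulled back'' from $I$ in the sense that its values on $G^{\sharp 0}$ are determined by a function on $I^{\sharp 0}$. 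So the natural move is to prove the identity for $G = I$ first — i.e. prove $a_I = \frac{1}{|I|}\sum_{\sigma \in I^{\sharp 0}} u_{I,\sigma}[\sigma]\bar\sigma\, a_\sigma$ as an identity in $\Q(I^\sharp)^0$ — and then note that applying $i$ (which sends $a_\sigma^I$ to $a_{i(\sigma)}^G$, since conductors and hence formal Artin characters of cyclic subgroups are compatible with inclusion of groups) yields the general case. This reduction is where the hypotheses get used cleanly and it isolates the real content.

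For the reduced identity over $I$, I would evaluate both sides at an arbitrary class $\rho \in I^{\sharp 0}$. The right-hand side, using $a_\sigma = \sum_{k \mid \bar\sigma}\frac{\phi(\bar\sigma/k)}{\bar\sigma}\hat a_{\sigma^k}$ from \eqref{aandahat} and the explicit values \eqref{ahatchar}, becomes a double sum whose value at $\rho$ collects contributions from all $\sigma \in I^{\sharp 0}$ with $\rho \mid \sigma$. After substituting $[\sigma] = |C_\sigma|/\phi(\bar\sigma)$ and simplifying the $\phi$-factors, the coefficient of (the appropriate multiple of) the indicator at $\rho$ should collapse — via the defining relation $\sum_{\tau \mid \sigma} d(\tau,\sigma)u_{I,\sigma} = 1$ of \eqref{dsum}, read with $\tau = \rho$, together with the path-independence and multiplicativity of the edge weights $d(\tau,\sigma) = [\sigma]/[\tau]$ — to exactly $-|I|/|C_\rho|$, matching $a_I(\rho) = -\frac{|I|}{|I|}|C_\rho^I|/|C_\rho|\cdot(\dots)$, i.e. the left-hand side. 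The value at $e$ is a separate, easier check that pins down the overall normalization. I expect the main obstacle to be precisely this last collapse: making the telescoping over the divisibility poset $I^{\sharp 0}$ rigorous, one must match the combinatorial weights $u_{I,\sigma}[\sigma]\bar\sigma$ against the $\phi$-function arithmetic coming from \eqref{aandahat}, and verify that the sum over the ``up-set'' $\{\sigma : \rho \mid \sigma\}$ telescopes because of \eqref{dsum} rather than by accident. Getting the indexing of powers $\sigma^k$ versus the poset edges aligned, and handling the case $\rho = e$ (where the whole poset $I^{\sharp 0}$ contributes) uniformly with $\rho \neq e$, is the delicate part; everything else is substitution of the explicit formulas already recorded in \S\ref{setting1} and \S\ref{setting2}.
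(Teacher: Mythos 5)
Your blueprint — reduce to the case $I=G$ by inducing both sides from $I$ to $G$, evaluate at an arbitrary class $\tau\in I^{\sharp 0}$, and let the defining relation \eqref{dsum} collapse the sum over the up-set $\{\sigma : \tau\mid\sigma\}$ — is exactly the paper's proof, and the extra detour through the precharacter basis is unnecessary but harmless. However, two intermediate claims are off and would trip you up if you ran the computation as written. First, from \eqref{permchar} one gets $a_I(\sigma) = -\frac{|G|}{|I|}\,\frac{|C_\sigma\cap I|}{|C_\sigma|}$ for $\sigma\neq e$; your ``unwound'' version $-\frac{|G|}{|I|}|C_\sigma\cap I|$ has a spurious factor of $|C_\sigma|$. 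Second, in the $I=G$ case the target value is simply $a_G(\tau)=-1$ for every $\tau\in G^{\sharp 0}$ (since $a_G=\phi_G-1$), not the $-|I|/|C_\rho|$ you expect the right-hand side to collapse to; the correct collapse is $a_\sigma(\tau) = -\frac{\phi(\bar\tau)}{\bar\sigma}\cdot\frac{|I|}{|C_\tau|}$ (only the single exponent $k=\bar\sigma/\bar\tau$ contributes, so there is no genuine double sum), whence $\bar\sigma$ and $|I|$ cancel, $[\sigma]\phi(\bar\tau)/|C_\tau| = [\sigma]/[\tau] = d(\tau,\sigma)$, and \eqref{dsum} gives $-1$. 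Finally, the separate check at $\rho=e$ is redundant: both sides lie in $\Q(G^\sharp)^0$, so agreement on $G^{\sharp 0}$ already forces agreement at $e$.
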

\noindent Before proving the lemma, we explain the roles that various parts of  \eqref{aIexpand}
play in the sequel.  The positive integer $[\sigma] \overline{\sigma}$ plays a very passive role: 
only the positivity of $[\sigma] \overline{\sigma}$ is used in the proof
of Theorem~\ref{easy-cgroup}; moreover, $[\sigma] \overline{\sigma}$ factors out
in Lemma~\ref{artinexpand} and accordingly does not enter into
 \S\ref{pqr}-\S\ref{pqrpos}.    The factor $|I|^{-1}$ is more important: 
while only its positivity enters into the proof of Theorem~\ref{easy-cgroup}, it contributes to the index 
factor in \eqref{wformula} which enters significantly into the rest of Section~\ref{universalN}.   
The part with the most important
role is $u_{I,\sigma}$, as it is the possible negativity of $u_{I,\sigma}$ that can lead to
failures of the tame-wild principle.   Our use of the function $i$ relegates the difference between
$I$ and $G$ to the background, but one should note that for $\tau \in G^{\sharp 0}$ 
the actual coefficient of $a_\tau$ in \eqref{aIexpand} has $|i^{-1}(\tau)|$ terms.  
\begin{proof}  First consider the case $I=G$.  Then both sides of \eqref{aIexpand} are in 
$\Q(I^\sharp)^0$.  The left side takes the value
$a_I(\tau) = - 1$ for all $\tau \in I^{\sharp 0}$.  We thus need to evaluate the
right side on an arbitrary $\tau \in I^{\sharp 0}$ and see that it simplifies to $-1$.   Calculating,
\begin{eqnarray*}
\frac{1}{|I|} \sum_{\sigma \in I^{\sharp 0}} u_{I,\sigma} [\sigma] \bar{\sigma} a_\sigma(\tau) & = & \frac{1}{|I|}
\sum_{\tau \mid \sigma} u_{I,\sigma} [\sigma] \bar{\sigma} a_\sigma(\tau) \\
& = & \frac{1}{|I|} \sum_{\tau \mid \sigma} u_{I,\sigma} [\sigma] \bar{\sigma}  \left( - \frac{\phi(\bar{\tau})}{\bar{\sigma}} \frac{|I|}{|C_{\tau}|} 
\right) \\
&=& - \sum_{\tau \mid \sigma} u_{I,\sigma} [\sigma] \frac{\phi(\bar{\tau})}{|C_{\tau}|}  \\
& = & - \sum_{\tau \mid \sigma} \frac{u_{I,\sigma} [\sigma]}{[\tau]}  \\
& = & - \sum_{\tau \mid \sigma} d(\tau,\sigma) u_{I,\sigma} \\
& = & - 1.
\end{eqnarray*}
Here we have used formulas from \S\ref{setting1} and \S\ref{setting2} as well
as the definition of $d(\tau,\sigma)$ and the defining property of the $u_{I,\sigma}$
 from \S\ref{gsharp}.  Finally the case of general
$G$ follows, by induction of both sides from $I$ to $G$.  
\end{proof}

\subsection{Applying the inertial method}
\label{applying}
     Say that a class $\tau \in G^\sharp$ is a {\emm U-class} if it divides exactly
 one maximal element $\sigma$ of $G^\sharp$ and $d(\tau,\sigma)=1$. 
   Otherwise, say it is an {\emm N-class}.  
 Here U stands for unique and N for nonunique.
  The following three facts are immediate from the definition.
  First, a maximal class $\tau$ is always a U-class with $u_{G,\tau}=1$.  
   Second, other U-classes $\tau$ have $u_{G,\tau}=0$.  Third, a maximal N-class 
   $\tau$ always has $u_{G,\tau} < 0$.     
   
      We divide all finite groups into two types, as follows. 
   \begin{defn}
  A finite group is a {\emm U-group} if every non-identity element is
  contained in exactly one maximal cyclic subgroup.  Otherwise
  it is an {\emm N-group}.  
\end{defn}
\noindent It is immediate that a group $G$ is a U-group if and only if all
classes $\tau \in G^{\sharp 0}$ are U-classes.   
Thus from Figure~\ref{classspaces}, $PGL_2(9)$ is a U-group while
$S_6$ is an N-group.    It follows easily from the definition
that any subgroup of a U-group is itself a
U-group.  Similarly, any quotient of a U-group is
a U-group \cite{suzuki1}.

Via \eqref{aandahat}, the chain~\eqref{mainchain} 
completely collapses to the equality $T_+(G) = \hat{T}_+(G)$ 
if and only if all non-identity elements in $G$ have prime order.   
The following theorem is a subtler version of this idea.  
   
\begin{thm} \label{easy-cgroup} Suppose $G$ is a group such that all inertial 
subgroups of $G$ are U-groups.  Then one has $T_+(G)=\tilde{T}_+(G)$ and
so the tame-wild principle 
holds universally for $G$.  
\end{thm}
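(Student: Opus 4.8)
The plan is to prove the equality $T_+(G) = \tilde{T}_+(G)$ and then let the inertial method of \S\ref{proofmethods} finish the job: once $T_+(G) = \tilde{T}_+(G)$ is known, the chain \eqref{mainchain} collapses to $T_+(G) = W_+(G) = \tilde{T}_+(G)$, and $T_+(G) = W_+(G)$ in the universal case $V = \R(G^\sharp)$ is exactly the statement that the tame-wild principle holds universally for $G$.

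The inclusion $T_+(G) \subseteq \tilde{T}_+(G)$ is automatic, since each $a_\tau$ is an $a_I$ with $I$ cyclic, hence inertial. So the substance is the reverse inclusion. Here I would argue that it is enough to show that each generator $a_I$ of $\tilde{T}_+(G)$, with $I$ an inertial subgroup of $G$, already lies in $T_+(G)$; this suffices because $T_+(G)$ is itself a cone, so it will then contain every nonnegative combination of the $a_I$.

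For this I would invoke Lemma~\ref{expansionformallemma}, which writes
\[
a_I = \frac{1}{|I|} \sum_{\sigma \in I^{\sharp 0}} u_{I,\sigma}\, [\sigma]\, \bar{\sigma}\, a_{i(\sigma)}
\]
in the tame basis. The scalars $|I|^{-1}$, $[\sigma]$, and $\bar{\sigma}$ are all strictly positive, so the coefficient of $a_{i(\sigma)}$ has the sign of $u_{I,\sigma}$. By hypothesis $I$ is a U-group, so every $\sigma \in I^{\sharp 0}$ is a U-class of $I$; applying the elementary facts recorded at the beginning of \S\ref{applying} with $I$ in the role of $G$, a maximal U-class contributes $u_{I,\sigma} = 1$ and every other U-class contributes $u_{I,\sigma} = 0$. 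Hence all coefficients are $\geq 0$ and $a_I$ is a nonnegative combination of tame characters, so $a_I \in T_+(G)$. Carrying this out for every conjugacy class of inertial subgroup of $G$ gives $\tilde{T}_+(G) \subseteq T_+(G)$, hence the desired equality, and the theorem follows.

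I do not expect a genuine obstacle here: the real work has already been done in Lemma~\ref{expansionformallemma}, and the argument above is essentially just a sign check. The one point I would be careful about is that both Lemma~\ref{expansionformallemma} and the sign facts for U-classes are phrased for a single ambient group, so I must apply them \emph{inside} each inertial subgroup $I$ — which is legitimate, since $I^{\sharp 0}$ and the integers $u_{I,\sigma}$ are intrinsic to $I$ — and only afterwards transport the conclusion to $G$ along the map $i\colon I^{\sharp 0} \rightarrow G^{\sharp 0}$, where the coefficients attached to distinct $\sigma$ having a common image $i(\sigma)$ simply add.
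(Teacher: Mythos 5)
Your proposal is correct and follows essentially the same route as the paper: reduce to showing each generator $a_I$ of $\tilde{T}_+(G)$ lies in $T_+(G)$, then apply Lemma~\ref{expansionformallemma} and observe that the U-group hypothesis forces all $u_{I,\sigma} \geq 0$, so every coefficient in the expansion is nonnegative. The only surface difference is that you spell out the dichotomy $u_{I,\sigma} \in \{0,1\}$ for U-classes while the paper simply cites nonnegativity, and you state the final collapse of \eqref{mainchain} explicitly; both are harmless elaborations of the same argument.
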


\begin{proof}   Let $I$ be an arbitrary inertial subgroup.  Since $I$ is assumed
to be a U-group, the associated integers $u_{I,\sigma}$ are nonnegative
for all $\sigma \in I^{\sharp 0}$.    
For any $\tau \in G^{\sharp 0}$, the terms  
$|I|^{-1} u_{I,\sigma} [\sigma] \bar{\sigma}$
contributing to the coefficient of $a_\tau$ in Lemma~\ref{expansionformallemma} are all nonnegative.  Hence
the coefficient itself is nonnegative and so $a_I$ is in the 
tame cone $T_+(G)$.   Since the $a_I$ generate the 
inertial cone $\tilde{T}_+(G)$, equality holds in
$T_+(G) \subseteq \tilde{T}_+(G)$.
 \end{proof}
 
\noindent In particular, the tame-wild principle holds for all U-groups.  This is the main import
of Theorem~\ref{easy-cgroup}, as we are not aware of any 
group satisfying the hypothesis of 
Theorem~\ref{easy-cgroup} which is not itself
a U-group.

\subsection{Classification of U-groups}
\label{Cclass}

    Given Theorem~\ref{easy-cgroup}, it is of interest to classify U-groups.  
This problem has been addressed in the literature with 
Kontorovich \cite{kontorovich1,kontorovich2} referring
to U-groups as completely decomposable groups, and Suzuki \cite{suzuki1}
calling them groups with a complete partition.   We give a summary
of the classification situation here.

    The condition to be a U-group is very restrictive, but it is easy to
check that it includes many groups of small order.  In particular, the
following groups are U-groups: cyclic groups, dihedral groups, groups
of prime exponent, and the Frobenius groups $F_p = C_p : C_{p-1}$.  
The last class is particularly important in our context, since an extension
of a $p$-adic field of degree $p$ has normal closure with Galois group
a subgroup of $F_p$.  If $q$ is a prime power, the linear groups
$\PSL_2(q)$ and $\PGL_2(q)$ are U-groups, so that in particular
$S_4 \cong \PGL_2(3)$, $S_5 \cong \PGL_2(5)$ and $A_6 \cong \PSL_2(9)$ are
all U-groups.  
There are more U-groups then those listed here, most of them being
more general types of Frobenius groups.

    The following observation is useful in understanding the nature of
U-groups.  In two settings, the extreme members of a class of groups are exactly
the U-groups as follows.  First, consider abelian $p$-groups of order
$p^n$.  Up to isomorphism, they correspond to partitions of $n$.  The
groups which are U-groups are the two extreme ones $(C_p)^n$ and
$C_{p^n}$.  Second, consider semidirect products $C_a :_\gamma C_b$
with $a$ and $b$ being relatively prime and $\gamma : C_b \rightarrow
\mbox{Aut}(C_a)$.  If $\gamma$ is trivial, then $C_a :_\gamma C_b
\cong C_{ab}$ is a U-group.  If $\gamma$ is injective, then $C_a
:_\gamma C_b$ is again a U-group, being of a nature similar to $F_p$
above.  Again, it is the intermediate cases which are N-groups:
if $\gamma$ is neither trivial nor injective then non-trivial elements
in the kernel of $\gamma$ are in more than one maximal cyclic
subgroup.

\section{The universal tame-wild principle usually fails for N-groups} 
\label{universalN}
  In this section,
 we study the universal tame-wild principle for N-groups.
 In \S\ref{artinexpand}, we give the canonical expansion of 
 a general Artin character $a_{L/F,\fp}$ in terms of tame characters $a_\tau$.    
 In  \S\ref{pqr}, we list out N-groups of order $pqr$ where $p$, $q$,
 and $r$ are not necessarily distinct primes, finding 
 six series.  We show in \S\ref{pqrneg} that the universal tame-wild 
 principle generally fails for groups in the first four 
 series.   In \S\ref{pqr5} we take a close look 
 at the quaternion group $Q_8$, which is the first 
 group of the fifth series, finding failure again.  
 On the other hand we show in \S\ref{pqrpos} that the universal tame-wild principle holds for all
 groups in the sixth series.  Finally,  \S\ref{promote} explains how the
 negative results for small groups support the
 principle that most N-groups do not satisfy the
 universal tame-wild principle.

 \subsection{Expansion of general Artin characters}
 \label{artinexpand}
     Let $a_{L/F,\fp} \in \Q(G^{\sharp})^0$ be an Artin character 
 coming from an inertial subgroup $I \subseteq G$.  
 Equation~\eqref{Iexpand} expands $a_{L/F,\fp}$ in terms of formal
 Artin characters $a_{I^{s_i}}$ and Lemma~\ref{expansionformallemma}
  in turn expands each
 $a_{I^{s_i}}$ in terms of tame characters.    Putting these two
 expansions together and replacing the divisibility posets
 $(I^{s_i})^{\sharp0}$ with their images in $I^{\sharp0}$ 
 gives the following lemma.  
 \begin{lemma} 
 \label{artinexpandlemma}Let $G$ be a group, and let $a_{L/F,\fp} \in \Q(G^{\sharp})^0$
 be an Artin character with inertia group $I=I^{s_1} \supset I^{s_2} \supset \cdots$
 as in \eqref{inertialchain}.  Let $i : I^\sharp \rightarrow G^\sharp$ be the induced map.  Then one has 
 the expansion 
  \begin{equation*}
 a_{L/F,\fp} = \frac{1}{|I|} \sum_{\sigma \in I^{\sharp 0}} w_{L/F,\fp,\sigma} [\sigma] \bar{\sigma} a_{i(\sigma)}
 \end{equation*}
 where
 \begin{equation}
 \label{wformula}
w_{L/F,\fp,\sigma} = \sum_{i=1}^k (s_i-s_{i-1}) [I:I^{s_i}] u_{I^{s_i},\sigma}.
\end{equation}
\end{lemma}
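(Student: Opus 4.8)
The plan is to prove Lemma~\ref{artinexpandlemma} by directly substituting the two expansions that have already been established, as the text preceding the statement indicates. First I would start from \eqref{Iexpand}, which writes
\[
a_{L/F,\fp} = \sum_{i=1}^k (s_i - s_{i-1}) a_{I^{s_i}},
\]
and for each index $i$ apply Lemma~\ref{expansionformallemma} to the subgroup $I^{s_i}$ of $G$. The one point requiring care is that Lemma~\ref{expansionformallemma} as stated expands $a_{I^{s_i}}$ in terms of the basis $\{a_\tau\}$ of $\Q(G^\sharp)^0$ using the divisibility poset $(I^{s_i})^{\sharp 0}$ and the induced map $(I^{s_i})^\sharp \to G^\sharp$, whereas we want everything indexed uniformly by $I^{\sharp 0}$ and the single map $i : I^\sharp \to G^\sharp$. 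So I would factor the map $(I^{s_i})^\sharp \to G^\sharp$ through $I^\sharp$ and reindex the sum accordingly.

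The key computational step is then to combine coefficients. Applying Lemma~\ref{expansionformallemma} to $I^{s_i}$ gives
\[
a_{I^{s_i}} = \frac{1}{|I^{s_i}|} \sum_{\rho \in (I^{s_i})^{\sharp 0}} u_{I^{s_i},\rho}\, [\rho]_{I^{s_i}}\, \bar\rho\, a_{i(\rho)},
\]
where $[\rho]_{I^{s_i}} = |C_\rho \cap I^{s_i}| / \phi(\bar\rho)$ is computed inside $I^{s_i}$. Pushing this forward along $(I^{s_i})^\sharp \to I^\sharp$, a class $\sigma \in I^{\sharp 0}$ receives a contribution exactly when $\sigma$ meets $I^{s_i}$; the identity $|I^{s_i}|^{-1}[\rho]_{I^{s_i}} = |I|^{-1}[I:I^{s_i}][\sigma]_I$ (both equal $|C_\sigma \cap I^{s_i}|/(|I|\phi(\bar\sigma))$ after accounting for how a single $I$-class $\sigma$ may split into several $I^{s_i}$-classes, all with the same $u_{I^{s_i},\cdot}$ value since that integer depends only on the divisibility poset position) lets me pull out the common factor $|I|^{-1}[\sigma]\bar\sigma$. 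Summing over $i$ then yields the coefficient
\[
w_{L/F,\fp,\sigma} = \sum_{i=1}^k (s_i - s_{i-1})\, [I:I^{s_i}]\, u_{I^{s_i},\sigma},
\]
which is \eqref{wformula}, with the convention $u_{I^{s_i},\sigma} = 0$ when $\sigma$ does not meet $I^{s_i}$.

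The main obstacle I anticipate is the bookkeeping in the previous paragraph: reconciling the three different normalizations of $[\,\cdot\,]$ and $u$ (computed in $I^{s_i}$ versus in $I$) and verifying that when a single $I$-conjugacy class of cyclic subgroups breaks into several $I^{s_i}$-classes, the value $u_{I^{s_i},\rho}$ is constant across those classes and the sizes $[\rho]_{I^{s_i}}$ add up correctly to $[I:I^{s_i}][\sigma]_I$. This is essentially a counting argument using the orbit-counting description of $[\sigma]$ as the number of cyclic subgroups of a given type, together with the fact that $I^{s_i}$ is normal in $I$, so conjugation by $I$ permutes the cyclic subgroups of $I^{s_i}$ of each given divisibility type transitively within each $I$-orbit. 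Once this identification of coefficients is in hand, the lemma follows immediately by linearity, and no genuinely new idea beyond Lemma~\ref{expansionformallemma} and \eqref{Iexpand} is needed.
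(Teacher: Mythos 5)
Your proposal is correct and follows exactly the route the paper takes: combine \eqref{Iexpand} with Lemma~\ref{expansionformallemma} applied to each $I^{s_i}$, then reindex from $(I^{s_i})^{\sharp 0}$ to $I^{\sharp 0}$, using normality of $I^{s_i}$ in $I$ to see that $u_{I^{s_i},\cdot}$ is constant on fibers and that the $[\rho]_{I^{s_i}}$ sum to $[\sigma]_I$ (note a small slip in your parenthetical: the common value is $|C_\sigma \cap I^{s_i}|/\bigl(|I^{s_i}|\phi(\bar\sigma)\bigr)$, not $/\bigl(|I|\phi(\bar\sigma)\bigr)$, and the $[\rho]_{I^{s_i}}$ add to $[\sigma]_I$ rather than to $[I:I^{s_i}][\sigma]_I$; the index $[I:I^{s_i}]$ enters from converting $|I^{s_i}|^{-1}$ to $|I|^{-1}$). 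The paper merely asserts the lemma as an immediate consequence of the two cited results, so your write-up actually supplies more of the bookkeeping than the original does.
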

\noindent 
While the lemma applies to the general situation, our focus in \S\ref{pqr}-\S\ref{pqrpos} is 
on the case $I = G$.  Here  $a_{L/F,\fp}$ is in the tame
cone $T_+(G)$ if and only if $w_{L/F,\fp,\tau} \geq 0$ for all $\tau \in G^{\sharp 0}$.

 \subsection{Inertial N-groups of order $pqr$}
 \label{pqr}   Groups
 of order $p$ or $pq$ are U-groups.   In the complete list
 of inertial groups of order $pqr$, in a rough sense
 about half of them are U-groups and the other half N-groups.  
 For example, for a given prime $p$, there are two non-abelian groups of
 order $p^3$, the extra-special groups often denoted 
 $p_+^{1+2}$ and $p_-^{1+2}$.   For $p$ odd, $p_+^{1+2}$ has exponent
 $p$ and so is a U-group, while $p_-^{1+2}$ is an N-group.  Similarly
 the dihedral group $D_4 = 2_+^{1+2}$ is a U-group while
 the quaternion group $Q_8 = 2_{-}^{1+2}$ is an N-group.
  
 In fact, it is easy to see that the inertial N-groups are as follows.   Now $p$, $q$, and
 $r$ are required to be different primes, with $q \mid p-1$ whenever $F_{p,q} = C_p:C_q$ is present:
 \begin{description}
 \item[1] The product $F_{p,q} \times C_r$. 
 \item[2] The semi-direct product $C_{p}:C_{q^2} \cong F_{p,q} *_{C_q} C_{q^2}$.
  \item[3] The abelian group $C_{pq} \times C_p \cong C_p \times C_p \times C_q$.
  \item[4] The product $F_{p,q} \times C_p$.
 \item[5] The extra-special group $p_-^{1+2}$.
 \item[6] The abelian group $C_{p^2} \times C_p$.
 \end{description}
 These groups $I$ are all $p$-inertial groups, but not inertial groups for any other primes.   
 Moreover, since all proper subgroups are U-groups, the universal tame-wild principle
 fails for $I$ if and only if there exists a totally ramified local Galois extension $L/F$ having
 $\gal(L/F) \cong I$ with
 associated Artin character $a_{L/F,\fp}$ having a negative coefficient $w_{L/F,\fp,\tau}$.
 Furthermore, in each case it turns out that 
 there is exactly one N-class $\tau \in I^{\sharp0}$.  
 Only for this class $\tau$ could $w_{L/F,\fp,\tau}$ possibly be negative,
 and this N-class
 is boxed in the displayed divisibility posets below.      
 
 In general, let $I$ be a $p$-inertial group.      Then it is known that there 
 indeed exists a totally ramified Galois extension of $p$-adic fields $L/F$
 with $\gal(L/F)$ isomorphic to $I$.   This fact for our 
 particular $I$ is essential to our proofs that the universal tame-wild
 principle does not hold.  However it easy to prove this fact for all
 the above $I$ by direct exhibition of $L/F$.   We will go into this
 level of detail only for the groups in Series~4 and $Q_8$ from  Series~5, as here we need particular fields
 satisfying conditions on their wild ramification.

 \subsection{Negative results for four series}
 \label{pqrneg} 
        Our first result concerns Series $1$-$4$ and is negative:
 \begin{thm}   
 \label{pqrnegthm} $F_{3,2} \times C_3 \cong S_3 \times C_3$ satisfies the universal tame-wild principle,
 but otherwise the groups $F_{p,q} \times C_r$, $C_{p}:C_{q^2}$, $C_{pq} \times C_p$, and  $F_{p,q} \times C_p$,  do not.
 \end{thm}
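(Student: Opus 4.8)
The plan is to reduce everything to the coefficient formula \eqref{wformula} with $I=G$ and exploit the fact, stated in \S\ref{pqr}, that each of these groups has exactly one N-class $\tau$, so the universal tame-wild principle holds for $G$ if and only if $w_{L/F,\fp,\tau}\geq 0$ for every totally ramified $L/F$ with $\gal(L/F)\cong G$. Thus for each series I must either exhibit one such $L/F$ with $w_{L/F,\fp,\tau}<0$ (failure) or prove $w_{L/F,\fp,\tau}\geq 0$ for all of them (the single exception $S_3\times C_3$). First I would read off from the divisibility poset $G^{\sharp 0}$ the relevant integers $u_{G^{s_i},\sigma}$: for the maximal N-class $\tau$ one has $u_{G,\tau}<0$ by the third immediate fact in \S\ref{applying}, and for the various proper ramification subgroups $G^{s_i}$ (which are U-groups, hence have $u\geq 0$) one computes $u_{G^{s_i},\tau}$ directly — it is $0$ or $1$ depending on whether the image of $\tau$ is maximal in $(G^{s_i})^{\sharp}$. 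Substituting into \eqref{wformula} expresses $w_{L/F,\fp,\tau}$ as an explicit $\Z$-linear combination of the jumps $(s_i-s_{i-1})$ with coefficients $[G:G^{s_i}]\,u_{G^{s_i},\tau}$, one of which (the $i=1$ term, $[G:G]u_{G,\tau}=u_{G,\tau}$) is negative.

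For the negative half — Series 1, 2, 4 and the non-$S_3\times C_3$ part of the $F_{p,q}\times C_r$ family — I would show the negative $u_{G,\tau}$ term can be made to dominate. In Series 1 and 2 the wild part is a $p$-group on which $\tau$ (which has order coprime to $p$ in the relevant cases, or is the unipotent class) restricts so that all later $u_{G^{s_i},\tau}$ vanish; then $w_{L/F,\fp,\tau}=s_1\,u_{G,\tau}<0$ for any such extension, and such extensions exist by the general existence fact for $p$-inertial groups quoted in \S\ref{pqr}. Series 4, $F_{p,q}\times C_p$, is the delicate one: here the wild inertia is the $C_p\times C_p$ normal subgroup and I need a ramification filtration $G=G^{s_1}\supset G^{s_2}\supset\{e\}$ (so a single wild jump to an index-$p$ subgroup) with the slopes arranged — via an explicit local field, as the excerpt promises to provide — so that the positive contribution $(s_2-s_1)[G:G^{s_2}]u_{G^{s_2},\tau}$ from the middle step does not compensate the negative $s_1 u_{G,\tau}$; choosing $s_1$ large relative to $s_2-s_1$ does this, and I must check $[G:G^{s_2}]=p$ and that $u_{G^{s_2},\tau}$ for the relevant $C_p\times C_p$-or-$F_{p,q}$ subgroup is small (it is $0$ or $1$). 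The case $C_{pq}\times C_p$ (Series 3) is purely tame — no wild jumps are forced, but the abelian structure still allows a two-step tame filtration, and a bad $v$-type argument or a direct $u_{G,\tau}<0$ at a non-maximal cyclic subgroup shows failure there too.

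For the single positive assertion, $F_{3,2}\times C_3\cong S_3\times C_3$: here $p=3$, the wild inertia is the normal $C_3$, and the forced filtration has $G=G^{s_1}\supset G^{s_2}\supset\{e\}$ with $G^{s_2}$ the wild $C_3$; the unique N-class $\tau$ is the order-$2$ class, which has order coprime to $3$, so $u_{G^{s_2},\tau}=0$ and $w_{L/F,\fp,\tau}=s_1\,u_{G,\tau}$. The point is that in this very small group $u_{G,\tau}\geq 0$: one computes from the $(S_3\times C_3)^{\sharp 0}$ poset that the order-$2$ class divides only one maximal class with edge weight $1$ after all, or — more robustly — that although $\tau$ is nominally an N-class, the value $u_{G,\tau}$ coming from \eqref{dsum} happens to be nonnegative because the coincidences of small order force the relevant $d(\tau,\sigma)$ to sum correctly. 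I would verify this by writing out the finitely many vertices of $(S_3\times C_3)^\sharp$, running the downward induction of \eqref{dsum}, and confirming $w_{L/F,\fp,\tau}\geq 0$ for the (only one) shape of ramification filtration that can occur. The main obstacle I anticipate is Series 4: producing the explicit local field over a $3$-adic (or $p$-adic) base with prescribed wild slopes realizing $\gal\cong F_{p,q}\times C_p$ and the right index $[G:G^{s_2}]$, and carefully checking that the jump data indeed pushes $w_{L/F,\fp,\tau}$ negative rather than merely to zero.
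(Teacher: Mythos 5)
Your overall framing is right: reduce via Lemma~\ref{artinexpandlemma} to the sign of $w_{L/F,\fp,\tau}$ at the unique N-class $\tau$, show it is negative for some realizable extension in the failing cases, and bound it from below in the exceptional case. For Series~1, 2, 3 the reduction to $w_\tau = u_{I,\tau} < 0$ is essentially what the paper does (though your description of Series~3, $C_p^2\times C_q$, as ``purely tame'' cannot be right --- it is not cyclic and so can only arise with wild ramification; what actually matters is only that the N-class $q$ has prime-to-$p$ order, so the wild terms in \eqref{wformula} vanish at it).

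The serious gap is in your treatment of Series~4 and in particular the exception $S_3\times C_3$. For $F_{p,q}\times C_p$ the boxed N-class in the paper's divisibility poset is $p_\infty$, a class of order $p$ --- the \emph{wild} prime. It is not the order-$q$ class, and in $S_3\times C_3$ it is not the order-$2$ class but the class $3_\infty$ generating the direct factor $C_3$. Consequently your assertion that $\tau$ has order coprime to $p$, that the later $u_{G^{s_i},\tau}$ vanish, and that $w_\tau = u_{G,\tau}$ is simply wrong here: the wild inertia subgroups contribute nontrivially. Your fallback claim that ``$u_{G,\tau}\geq 0$ because the coincidences of small order force the relevant $d(\tau,\sigma)$ to sum correctly'' is also false: the edge from $p_\infty q$ down to $p_\infty$ carries weight $p$, so $u_{G,p_\infty}=1-p=-2<0$ for $S_3\times C_3$, exactly as for every other group in the series. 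The real mechanism for the exception is the one your plan omits entirely: one must write $w_{p_\infty}$ out as an explicit function of the two wild slopes $s$ (for the $F_{p,q}$ piece, with exact denominator $q$) and $c$ (for the $C_p$ piece, an integer $\geq 2$ since $C_p$ is abelian), observe that for $(p,q)=(3,2)$ these arithmetic constraints force $w_{3_\infty}\geq 0$ in both cases $c>s$ and $s>c$, and then for other $(p,q)$ construct an explicit local extension achieving $w_{p_\infty}<0$.

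Two smaller but real errors: you propose to make $w_\tau<0$ in Series~4 by ``choosing $s_1$ large relative to $s_2-s_1$,'' but $s_1=1$ is fixed --- it is the tame jump and cannot be chosen. And your filtration $G=G^{s_1}\supset G^{s_2}\supset\{e\}$ with $G^{s_2}$ of ``index $p$'' is wrong on two counts: the wild inertia $C_p\times C_p$ has index $q$, not $p$, and generically there is a further jump to a $C_p$ subgroup, so the filtration has three steps, not two; both higher steps can contribute positively to $w_{p_\infty}$ and must be estimated.
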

    
\begin{proof}    In the divisibility posets below,  the wild classes, meaning the classes of $p$-power order,
 are put in boldface for further emphasis.    For the first three series, the unique N-class $\tau$ 
has prime-to-$p$ order and so we do not need to enter into an examination of wild slopes.
In Series~4, $\tau$ has order $p$ and bounds on wild slopes lead to the exception.  

1.  For the group $I = F_{p,q} \times C_r$, 
power-conjugacy classes are determined
by their orders and the divisibility poset $I^{\sharp0}$ is
\begin{equation*}
\xymatrix{
pr \ar@{-}[d]  \ar@{-}[dr]  &  qr \ar@{-}[d]_{p \!} \ar@{-}[dr] & \; \\ 
\mathbf{ p }& \fbox{$r$}   & q.   
}
\end{equation*} 
Equation~\eqref{wformula} becomes $w_r = u_{I,r} = -p<0$.    
 So by 
the existence of totally ramified $I$-extensions as discussed in the previous subsection,
the universal tame-wild principle does not hold for $I= F_{p,q} \times C_r$.  

2.  The group $I = C_p:C_{q^2}$ behaves very similarly.
Again power-conjugacy classes are determined by their
orders:
\[
\xymatrix{
pq \ar@{-}[d] \ar@{-}[dr] & q^2 \ar@{-}[d]^p \\
\mathbf{p}  & \fbox{$q$} \; .  
}
\]
The key quantity  $w_q = u_{I,q} = -p$ is again negative, so the universal tame-wild principle
fails for $C_p : C_{q^2}$.  

3.  The group $I = C_p^2 \times C_q$ has a more complicated 
divisibility poset $I^{\sharp 0}$ but the behavior is otherwise similar.  
The classes of order $p$ and the classes of order $pq$ 
have the structure of projective lines over $\F_p$ in  
bijection with one another:  
\begin{eqnarray*}
\xymatrix{
p_0q \ar@{-}[d]  \ar@{-}[drrrrr]& \cdots & p_i q \ar@{-}[d]  \ar@{-}[drrr]& \cdots & p_\infty q \ar@{-}[d]  \ar@{-}[dr]& \\
\mathbf{p_0} & \cdots & \mathbf{p_i}    & \cdots & \mathbf{p_\infty}  & \fbox{$q$} \; .  
}
\end{eqnarray*}
Once again $w_q=u_{I,q}=-p$ and so the universal tame-wild principle fails
for $C_{p}^2 \times C_q$. 

4.  For $I = F_{p,q} \times C_p$ the divisibility poset $I^{\sharp0}$
is disconnected: 
\[
\xymatrix{
        &    &    &     & p_\infty q \ar@{-}[d]_{p} \ar@{-}[dr] &  \\
\mathbf{p_0} & \cdots & \mathbf{p_i} & \cdots &
\fbox{$\mathbf{p_\infty}$} & q \, .  
}
\]
Here $p_0$ and $q$ lie in the factor $F_{p,q}$ while $p_\infty$ lies in the
factor $C_p$.   The first term in \eqref{wformula} is $u_{I,p_\infty} =1-p$.
However now we must take into account how wild ramification contributes 
to the remaining terms.  Let $s>1$ be the slope associated to $F_{p,q}$ 
and let $c>1$ be the slope associated to $C_p$.   Since $C_p$ is abelian,
$c$ must be integral and hence $c \geq 2$.   On the other hand, 
$s$ must have exact denominator $q$.  
Let $m = \min(c,s)$ so that $I^m = C_p^2$ is the wild inertia group
and $I^{\max(c,s)} \cong C_p$ is a higher inertia group.       
If $c>s$ then $(I^c)^{\sharp 0} = \{p_\infty\}$, while if $s>c$ then
$(I^s)^{\sharp 0}= \{p_0\}$. 
Equation~\eqref{wformula} becomes
\[
w_{p_\infty} = \left\{
\begin{array}{ll} 
(1-p) + q (s-1) + q p (c-s)  & \mbox{if $c > s$,} \\
(1-p) + q (c-1)                     & \mbox{if $s > c$.}
\end{array}
\right.
\]
For $(p,q) = (3,2)$, the general formula
simplifies to
\[
w_{3_\infty} = \left\{
\begin{array}{ll} 
6c-4s-4  & \mbox{if $c > s$,} \\
2c-4     & \mbox{if $s > c$.}
\end{array}
\right.
\]
Thus, using $c \geq 2$, one has $w_{3_\infty} \geq 0$ and so the universal
tame-wild principle holds for $F_{3,2} \times C_3$.    

There are many ways to produce an explicit instance with
$w_{p_\infty}<0$ for the remaining $(p,q)$.  We will present one in
the setting $s > c=2$ in which case $w_{p_\infty} = 1+q-p$ is indeed
negative.  To get an $F_{p,q}$ extension, start with $x^p-p$ which
gives a totally ramified $F_{p,p-1}$ extension of $\Q_p$ with wild
slope best written in the form $1 + \frac{p}{p-1}$.  Write $e =
(p-1)/q$ and extend the ground field from $\Q_p$ to $F_e =
\Q_p[\pi]/(\pi^e-p)$.  Then $x^p-p$ has Galois group $F_{p,q}$ over
$F_e$, with wild slope $1 + \frac{ep}{p-1}$, as tame base-change
always scales slopes this way.  But now $x^p-\pi x^{p-1} + \pi$ has
wild slope $2$ and, after perhaps replacing $F_e$ by an unramified
extension $F$, Galois group $C_p$ \cite{amano}.  The splitting field
of $(x^p-p) (x^p-\pi x^{p-1} + \pi)$ gives the desired extension $L/F$
showing that the universal tame-wild principle does not hold for
$F_{p,q} \times C_p$.
\end{proof}

\subsection{Negative result for $Q_8$} 
\label{pqr5}  
The fifth series, consisting of groups of the form $p_-^{1+2}$, is the
most complicated.  Here we treat only 
 $2_-^{1+2}=Q_8$, getting a negative result.  
\begin{prop} 
\label{q8prop} 
The universal tame-wild principle fails for the
quaternion group.
\end{prop}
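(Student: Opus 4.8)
The plan is to exhibit an explicit totally ramified $Q_8$-extension $L/F$ of $p$-adic fields (necessarily with $p=2$, since $Q_8$ is $2$-inertial) whose Artin character $a_{L/F,\fp}$ has a negative coefficient $w_{L/F,\fp,\tau}$ on the unique N-class $\tau$, thereby certifying that $a_{L/F,\fp} \notin T_+(Q_8)$ by Lemma~\ref{artinexpandlemma}. First I would record the divisibility poset $Q_8^{\sharp 0}$: it has the three classes $4A$, $4B$, $4C$ of order four as maximal elements, each with edge of weight $1$ down to the single class $\tau = 2A$ of order two. Since $2A$ divides all three maximal classes, it is the unique N-class, and $u_{Q_8,2A} = 1 - 3 = -2 < 0$ while the maximal classes have $u = 1$. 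The wild ramification filtration of a totally ramified $Q_8$-extension is $Q_8 = I^{s_1} \supset I^{s_2} \supset \cdots$; because $Q_8$ has a unique involution generating its center, the chain must pass through $Z = \langle 2A \rangle \cong C_2$ before reaching $\{e\}$, and the relevant slopes control the sign of $w_{L/F,\fp,2A}$ via \eqref{wformula}.

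Next I would carry out the computation of $w_{L/F,\fp,2A}$ in terms of the slopes. Writing the filtration as $Q_8 \supset Q_8 \supset \cdots \supset C_2 \supset \cdots \supset \{e\}$ with associated slopes, one breaks $w_{L/F,\fp,2A}$ into the contributions from the steps where the inertia quotient is all of $Q_8$ (contributing $u_{Q_8,2A} = -2$ times the relevant slope increments, scaled by index $1$) and the steps where it has already descended to $C_2$ (where $2A$ is now maximal, so $u_{C_2,2A} = 1$, scaled by index $[Q_8:C_2] = 4$). The upshot is a formula of the shape $w_{2A} = -2(b-1) + 4(c-b) \cdot (\text{something})$ where $b$ is the largest slope at which the inertia quotient is still $Q_8$ (equivalently, $I^b \supseteq C_2$ properly... ) and $c$ is the top slope; the point is that for suitable ramification data — in particular when the "$Q_8$-part" of the filtration is long relative to the $C_2$-part — this quantity is negative. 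I would then just need one honest example: take a base field $F$ that is a $2$-adic field and an extension realizing $Q_8$ with the right slope profile. A convenient source is a biquadratic $C_2 \times C_2$ extension of $F$ whose three quadratic subextensions have controlled conductors, embedded into a $Q_8$-extension (the embedding problem for $C_2 \times C_2 \hookrightarrow Q_8$ is classical); choosing the wild ramification so that the slopes force $w_{2A} < 0$ finishes the argument.

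The main obstacle I anticipate is the explicit construction of a totally ramified $Q_8$-extension of a $2$-adic field with a prescribed (sufficiently skewed) ramification filtration, together with the verification that its slopes land in the negative region of the formula. The existence of \emph{some} totally ramified $Q_8$-extension is guaranteed (as the excerpt notes, every $p$-inertial group occurs), but pinning down the slopes requires either a direct polynomial exhibition — e.g.\ a defining polynomial over $\Q_2$ or a ramified base change thereof, analogous to the $x^p - p$ and $x^p - \pi x^{p-1} + \pi$ constructions used for Series~4 — or an appeal to the known structure of $Q_8$-extensions of local fields via their three quadratic subfields and the theory of the second ramification jump. I would expect the cleanest route to be: choose $F/\Q_2$ so that $F$ contains enough roots of unity / uniformizers to make the relevant quadratic extensions maximally (or near-maximally) wildly ramified, build the $Q_8$-extension as the solution of the associated embedding problem, compute the two ramification breaks directly, and plug into \eqref{wformula}; the remaining inequality $w_{2A} < 0$ is then a short arithmetic check. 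Should an entirely explicit local example prove fiddly, a fallback is to compute the conductors of the three quadratic resolvents and of a faithful four-dimensional (or the regular) character, and invoke the conductor-discriminant relation to back out $a_{L/F,\fp}$ and hence $w_{2A}$ without ever writing the filtration down.
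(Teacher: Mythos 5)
Your overall strategy coincides with the paper's: identify $2A$ as the unique N-class of $Q_8$ with $u_{Q_8,2A}=-2$, push the slope filtration through \eqref{wformula}, and then exhibit a $Q_8$-extension of a $2$-adic field whose slopes make $w_{2A}<0$. That part is on target. But the proposal has two problems, one minor and one substantial.

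The minor one is the formula. With $s_0=0$ as in \eqref{Iexpand}--\eqref{wformula}, the filtration $Q_8=I^{s_1}\supset C_2=I^{s_2}\supset\{e\}$ (the ``two-slope'' case, with $s_1$ of multiplicity two since $Q_8/C_2\cong C_2\times C_2$) gives
\[
w_{2A} \;=\; s_1\cdot[Q_8:Q_8]\,u_{Q_8,2A} \;+\; (s_2-s_1)\cdot[Q_8:C_2]\,u_{C_2,2A} \;=\; -2s_1+4(s_2-s_1) \;=\; 4s_2-6s_1,
\]
so the counterexample condition is $s_2<\tfrac32 s_1$. Your ``$-2(b-1)+4(c-b)$'' is off by a constant $+2$: there is no tame step to subtract, because $Q_8$ is a $2$-group, so the first term is $-2s_1$, not $-2(s_1-1)$.

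The substantial gap is that you leave the construction of an actual extension with $s_2<\tfrac32 s_1$ as a hoped-for step, and you underestimate how constrained this is. Solving the embedding problem $C_2\times C_2\hookrightarrow Q_8$ over a $2$-adic field is subject to a Witt/Hilbert-symbol obstruction, and even when solvable, the ramification filtration is not freely prescribable. In fact the paper points out that over $\Q_2$ itself this route cannot work: all four totally ramified quaternionic extensions of $\Q_2$ have slope content $[2,3,4]$, which does not satisfy $s_2<\tfrac32 s_1$ (it is not even the two-slope case). The paper's proof resolves this by passing to a tame/unramified base change $F/\Q_2$ and producing explicit octic $2$-adic fields (from \cite{jr-2adicoctics}) with inertia $Q_8$ and slope content such as $[2,2,2.5]$ over $F$, which gives $w_{2A}=4(2.5)-6(2)=-2<0$. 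Your proposal neither identifies the need to leave $\Q_2$ nor supplies the explicit field, so as written it does not actually establish failure; it reduces the proposition to an existence claim that is false over the most natural base field and is only verified by the exhibited examples.
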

\begin{proof}
The divisibility poset $Q^{\sharp 0}_8$, with unique N-class boxed as always, is 
\[
\xymatrix{
4_i \ar@{-}[dr] & 4_j \ar@{-}[d] & 4_k. \ar@{-}[dl]  \\
    & \,\, \fbox{$\mathbf{2}$}.&
    }
\]
The generic case has three distinct slopes.   We seek only
counterexamples and so we focus on the special case 
with two slopes $s_1<s_2$, with $s_1$ occurring
with multiplicity two.  The key quantity \eqref{wformula} 
here becomes  
$w_2 = -2 s_1 + 4 (s_2-s_1) = 4 s_2 - 6 s_1$.  Thus
one gets a counterexample to the universal tame-wild principle
if and only if $s_2<1.5 s_1$.   

  The table of octic 2-adic fields \cite{jr-2adicoctics} available
from the website of \cite{jr-local-database} then give four types of counterexamples in this context, after tame
base-change from $\Q_2$ to its Galois extension $F$ with 
ramification index $t$ and residual
field degree $u$:
\begin{equation}
\label{q8counters}
\begin{array}{ccccccc}
\# & c & \mbox{Slope Content} & I & D & s_1 & s_2  \\
\hline
2 & 10 & [1.\overline{3},1.\overline{3},1.5]_3^2 & SL_2(3) & GL_2(3) & 2 & 2.5\\ 
2 & 16 &  [2,2,2.5]^2 & Q_8 & \hat{Q}_8 & 2 & 2.5  \\
4  & 16 & [2,2,2.5]^4 & Q_8 & 8T17 & 2 & 2.5 \\ 
8  & 22 & [2.\overline{6},2.\overline{6},3.5]_3^2 & SL_2(3) & GL_2(3) & 6 & 8.5  \\
\end{array}
\end{equation}
Here in the first and last cases,  we use the general conversion from slope content $[\cdots \sigma_i \cdots ]_t^u$ 
over $\Q_2$ to slope content $[\cdots s_i \cdots]_1^1$ over $F$ given by
$s_i = 1 + t(\sigma_i-1)$.   A full treatment of the range of possible counterexamples
could have \cite[Prop.~4.4]{fontaine} as its starting point.    \end{proof}

Our counterexamples in \S\ref{elusive} and \S\ref{bestcounter} will be built from one of the
two fields with slope content $[2,2,2.5]^2$.  A point to note here is that $\Q_2$ does  
have totally ramified quaternionic extensions, in fact four of them, all
with slope content $[2,3,4]$ \cite{jr-2adicoctics}.   However these extensions do not give counterexamples
to the universal tame-wild principle for $Q_8$.  The fact that the first local counterexamples 
come from $\hat{Q}_8 = 8T8$ extensions of $\Q_2$ plays a prominent
role in our later global counterexamples.

\subsection{Positive results for $C_{p^2} \times C_p$}   
\label{pqrpos}
Here we prove that the N-groups in Series~6 always satisfy the universal tame-wild principle.  
Unlike most of our previous positive results, but like the exception $S_3 \times C_3$ of \S\ref{pqrneg},
this result is not purely group-theoretic.  Rather it depends on
a close analysis of the possibilities for wild slopes.   
Said in a different way, the situation for these $I$ is $T_+(I) = W_+(I) \subset \tilde{T}_+(I)$
so that the universal tame-wild principle holds, even though it is not provable
by the inertial method.  

 \begin{thm}  
 \label{series6thm} The groups $C_{p^2} \times C_p$ satisfy the universal tame-wild principle. 
 \end{thm}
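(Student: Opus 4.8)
The plan is to combine the expansion of Lemma~\ref{artinexpandlemma} (in the case $I=G$) with two classical facts about ramification in abelian $p$-adic extensions. Write $I=C_{p^2}\times C_p=\langle x\rangle\times\langle y\rangle$ and $Z=\Phi(I)=\langle x^p\rangle$. The first step is the group theory of $I$: the class set $I^{\sharp 0}$ consists of $p$ maximal classes of order $p^2$ (the $\langle xy^j\rangle$), $p$ maximal classes of order $p$ (the $\langle y\rangle$ and $\langle x^py^j\rangle$), and the single class $\tau_0$ of $x^p$, which divides all $p$ classes of order $p^2$. Since $I$ is abelian every bracket $[\sigma]$ and every edge weight equals $1$, so $\tau_0$ is the unique N-class and $u_{I,\tau_0}=1-p$, while $u_{C_{p^2},\tau_0}=0$ and $u_{(C_p)^2,\tau_0}=u_{C_p,\tau_0}=1$. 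All proper subgroups of $I$ are U-groups, so their $u$-invariants are nonnegative; hence, exactly as in the proof of Theorem~\ref{easy-cgroup}, an Artin character $a_{L/F,\fp}$ with inertia group $I$ lies in the tame cone $T_+(I)$ if and only if $w_{L/F,\fp,\tau_0}\ge 0$. So the whole theorem reduces to this one inequality.

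Next I would enumerate the ramification filtrations. Since $I$ is abelian, Hasse--Arf (as invoked already in \S\ref{pqrneg}) forces the slopes $s_1<\cdots<s_k$ to be integers $\ge 2$, and each successive quotient $I^{s_i}/I^{s_{i+1}}$ (including $I^{s_k}$ itself) is elementary abelian, so $I^{s_2}\supseteq Z$. Running through the subgroups of $I$ that can occur, one gets only the chains $I\supset Z\supset 1$; $\,I\supset C_{p^2}\supset Z\supset 1$; $\,I\supset\Omega\supset 1$ (with $\Omega$ the unique $(C_p)^2$); $\,I\supset\Omega\supset Z\supset 1$; and $\,I\supset\Omega\supset\langle\rho\rangle\supset 1$ (with $\langle\rho\rangle\neq Z$ of order $p$). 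Feeding the $u$-values above into \eqref{wformula}, $w_{L/F,\fp,\tau_0}$ becomes a nonnegative combination of the slope increments with a single negative term $-(p-1)s_1$; it equals, in the five cases respectively, $(1-p)s_1+p^2(s_2-s_1)$, $(1-p)s_1+p^2(s_3-s_2)$, $(1-p)s_1+p(s_2-s_1)$, $(1-p)s_1+p(s_2-s_1)+p^2(s_3-s_2)$, and again $(1-p)s_1+p(s_2-s_1)$.

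The two analytic inputs are: (a) for a cyclic degree-$p^2$ extension with upper breaks $b'<b''$ over a $p$-adic field of absolute ramification $e$, comparing the valuations of $px\pi^{b'}$ and $x^p\pi^{pb'}$ on the unit filtration shows $b''=b'+e$ when $b'>e/(p-1)$ and $b''\ge pb'$ when $b'\le e/(p-1)$ (the boundary value $b'=e/(p-1)$ requiring a short look at the additive polynomial $x\mapsto x^p+\overline{(p/\pi^e)}x$); and (b) every $C_p$-character over such a field has break at most $pe/(p-1)$. In each of the five shapes I would point to a cyclic degree-$p^2$ subextension of $L/F$ realizing two of the slopes as its break pair, and in the three ``$\Omega$-first'' shapes also to a $C_p$-subextension whose break is the largest slope (this is what caps that slope by $pe/(p-1)$); substituting (a) and (b) into the five formulas and using $e\ge 1$, $p\ge 2$ then yields $w_{L/F,\fp,\tau_0}\ge 0$ in every case, completing the proof. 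Concretely, in the ``$\Omega$-first'' shapes the cap $b''\le pe/(p-1)$ combined with $b''=b'+e$ forces $b'\le e/(p-1)$, so one is always in the regime $b''\ge pb'$, which makes $-(p-1)s_1$ harmless; in the other shapes the positive term carries a factor $p^2$ and easily dominates.

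The step I expect to be the main obstacle is precisely the correct application of fact (a) shape by shape: one has to verify carefully which subquotient governs which slope — in particular that in the ``$\Omega$-first'' chains the top slope really is capped by $pe/(p-1)$ rather than only by the larger bound $p^2e/(p-1)$ available to a genuine order-$p^2$ character — and one must handle the boundary case $b'=e/(p-1)$ of the dichotomy, together with its residue-field-$\F_p$ subcase where the $p$-th power map degenerates still further (this only makes $b''$ larger, hence helps, but it needs to be said). Away from these points the argument is bookkeeping, and it is this delicate but elementary slope analysis that the statement's preamble refers to when it says the result ``is not provable by the inertial method.''
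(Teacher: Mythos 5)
Your reduction to showing $w_{L/F,\fp,\tau_0}\ge 0$, your enumeration of the five possible upper-numbering filtration chains, and your two inputs (the $C_{p^2}$ break dichotomy and the $C_p$ break cap) are all the same ingredients as the paper's proof, just parametrized by chains instead of by the triple $(v_1,v_2,c)$. Your formulas for $w$ in the five cases match the paper's three-case formula \eqref{uform} exactly after the change of variables $s_i=1+v_i$, $s_j=1+c$. So the setup is right.

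The gap is in the concluding case analysis, precisely at the chain $I\supset\Omega\supset Z\supset 1$ (your fourth shape, the paper's middle case $v_1\le c<v_2$). You claim that in all three ``$\Omega$-first'' shapes there is a $C_p$-subextension whose break equals the largest slope, so that the top slope is capped by $pe/(p-1)$, forcing $b'\le e/(p-1)$ and hence $b''\ge pb'$. But for this chain the top ramification subgroup is $I^{s_3}=Z=\Phi(I)$, which lies in \emph{every} index-$p$ subgroup of $I$; hence every degree-$p$ quotient of $L/F$ has break at most $s_2$, and \emph{no} $C_p$-subextension detects $s_3$. (Note also that your cyclic degree-$p^2$ quotient here has break pair $(s_1,s_3)$, not $(s_1,s_2)$: at $u$ between $s_2$ and $s_3$ the group $I^u=Z$ still maps nontrivially into $I/K$ for $K\ne Z$.) Consequently $s_3-1$ is \emph{not} capped by $pe/(p-1)$, the arithmetic regime $v_2=v_1+e$ with $v_1>e/(p-1)$ is genuinely possible (e.g.\ $p=2$, $e=2$, $(v_1,c,v_2)=(3,4,5)$), and your ``always geometric'' deduction fails. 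The paper closes exactly this case by a separate computation in the arithmetic regime: it substitutes $v_2=v_1+e$ into $w=(v_1+1)(1-p)+(c-v_1)p+(v_2-c)p^2$ and then uses the cap $c\le pe/(p-1)$ on the \emph{intermediate} slope together with $v_1\ge 1$ to get $w/(p-1)\ge p-1\ge 1$. Your sketch never brings the cap on $s_2$ into play for this shape, and the factor $p^2$ on $(s_3-s_2)$ (which you reserve for the ``other shapes'') is essential here. You did flag ``verify which subquotient governs which slope'' as the danger point; that verification in fact fails in the fourth shape, so the proposal as written does not close the proof there.
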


\begin{proof} 
Let $K_{p^2}/F$ be a cyclic extension of degree $p^2$ and slopes $s_1<s_2$.  Let
$K_p/F$ be a cyclic extension of degree $p$ and slope $t$.    Switch to the 
indexing scheme of \cite{serre-CL} via $s_i = 1+ v_i$ and $t = 1 + c$, so as to 
better align also with our reference \cite{fontaine} and in particular make \eqref{geom} below
as simple as possible.  There are three possibilities
for how the slope filtration goes through the group:
\begin{equation*}
\begin{array}{ccccc}
c < v_1 < v_2  &\;\;& v_1 \leq c < v_2 &\;\;& v_1 < v_2 \leq c\\
\xymatrix@C=0.49pc{
\mathbf{p^2} \ar@{-}[d]_p \\
\fbox{$\underline{\mathbf{p_0}}$} & p_1 & \cdots & p_\infty,  \\
} &&
\xymatrix@C=0.49pc{
p^2 \ar@{-}[d]_p  \\
\fbox{$\underline{\mathbf{p_0}}$} 
& \mathbf{p_1} & \mathbf{\cdots} & \mathbf{p_\infty},
}  &&
\xymatrix@C=0.49pc{
p^2 \ar@{-}[d]_p \\
\fbox{$\mathbf{p_0}$} & \mathbf{p_1}  & \mathbf{\cdots} & \underline{\mathbf{p_\infty}}.  } 
\end{array}
\end{equation*}
Here, assuming all inequalities are strict, classes in the higher inertia group of order $p^2$ are put in bold and
the classes in the higher inertia group of order $p$ are furthermore underlined.  If one
has equality, the formulas below still apply.  

As in the previous two subsections, only one $w_\tau$ from Lemma~\ref{artinexpandlemma}
 could possibly be
negative, and in this case it is $w=w_{p_0}$.   Equation~\eqref{wformula} becomes   
\begin{equation}
\label{uform}
w = 
\left\{
\begin{array}{ll}
 (c+1)  (1-p) + (v_2-v_1) p^2 & \mbox{ if  $c < v_1 < v_2$,} \\
 (v_1+1)  (1-p) + (c-v_1) p + (v_2-c)  p^2 & \mbox{ if $v_1 \leq c < v_2$,} \\
 (v_1+1) (1-p) + (v_2-v_1) p & \mbox{ if $v_1 < v_2 \leq c$.}
\end{array}
\right.
\end{equation}
    Let $e$ be the ramification index of $F/\Q_2$ and put $B = e/(p-1)$.  From the known behavior
    of cyclic degree $p$ extensions, one has
   \begin{equation}
  \label{double}
  1 \leq c \leq pB,  \hspace{1in} 1 \leq v_1 \leq pB.
  \end{equation}
  There are two regimes to consider, the geometric regime where
  $v_1 < B$ and the arithmetic regime where $v_1 \geq B$.   
  One has
  \begin{align}
\label{geom}  v_2 & \geq pv_1 & & \hspace{-.8in} \mbox{in the geometric regime}, \\
\label{arith} \mbox{and } v_2 & = v_1 + e && \hspace{-.8in} \mbox{in the arithmetic regime}.
  \end{align}
  These last two facts and other related information dating back to \cite{maus}
  are conveniently available in 
 \cite[Prop.~4.3]{fontaine}.
  
      The quantity $e$ does not enter into the geometric inequality \eqref{geom} and 
  since we need to deal with arbitrary $e$ the upper bounds
 in \eqref{double}  are not available to us.  This fact is
 the source of our terminology because the geometric case is now identified
 with the case where $p$-adic fields have been replaced by $\F_{p^f}((t))$, which
 have $e=\infty$.    The worst case is always when $v_2 = p v_1$ and, in the second
 case, when $c$ takes on its limiting bound $v_2$ as well.  Substituting these worst cases into \eqref{uform} and
 simplifying, one has 
  \[
w \geq
  \left\{ 
  \begin{array}{ll}
 (v_1+1)(1-p) + (pv_1 -v_1)p^2  & \mbox{ if $c < v_1 < v_2$,} \\ 
  (v_1+1)(1-p) + (p v_1-v_1) p  & \mbox{ if $v_1 \leq c < v_2$,} \\
 (v_1+1)(1-p) + (p v_1 - v_1) p  &  \mbox{ if $v_1 < v_2 \leq c$.}  \\
  \end{array}
  \right.
  \]
  With $m$ equal to $p^2$, $p$, $p$ in the three cases. one further simplifies by
  \[
  w \geq (p-1)(-v_1-1+v_1 m) = (p-1)((m-1)v_1-1) \geq 0.
  \]
  Thus in the geometric regime, $w$ is never negative.

   In the arithmetic regime the substitute \eqref{arith} for \eqref{geom} is simpler in that 
   it is an equality, but now the  
 upper bounds in \eqref{double}  will need to be used.  
  The substitution $v_2 = v_1 + e = v_1 + B (p-1)$ into \eqref{uform} makes $w$  
  factor and we divide by the positive quantity $p-1$:
  \[
  \frac{w}{p-1} = 
  \left\{ 
  \begin{array}{ll}
  B p^2 -1-c, & \mbox{ if $c < v_1 < v_1+e$,} \\
  B p^2 - v_1 - p (c-v_1) - 1, & \mbox{ if $v_1 \leq c < v_1+e$,}  \\
  B p - v_1-1, & \mbox{ if $v_1 < v_1+e \leq c$.} 
  \end{array}
  \right.
  \]
  Using the bounds \eqref{double} one has
   \[
  \frac{w}{p-1} \geq 
  \left\{ 
  \begin{array}{ll}
  B p^2 -1-Bp  = B p (p-1) - 1 = e p-1 \geq 1,\\
  B p^2 - B p - p (e - 1)  - 1 = e p - p( e-1) - 1 = p-1 \geq 1,  \\
  B p - v_1-1 \leq B p - (B p-e) - 1 = e-1 \geq 0.
  \end{array}
  \right.
  \]
  in the three cases.  Thus here too $w \geq 0$.    
  \end{proof}

\subsection{From smaller to larger groups} 
\label{promote}
 Our final topic in this section is to promote our counterexamples from the small groups $I$ to larger
groups $G$ that contain them.   In general, let $I \subseteq G$ be an 
inclusion of groups and consider the induced map 
$i : I^\sharp \rightarrow G^\sharp$.  Then the 
lack of injectivity of $i$ can obstruct
the promotion process.   For example, consider Series 4 groups $I = F_{p,q} \times C_p$ 
and their product embedding into $G=S_{p^2}$.  Then all $p+1$ classes in $I^\sharp$ of
order $p$ go to the single class in $S_{p^2}^\sharp$ indexed by the partition
$p^p = p \cdots p$.  To get the coefficient of $a_{p^p}$ of the pushed-forward formal Artin conductor 
$a_I \in \Q(S_{p^2}^\sharp)^0$ one has to add the contributions of the fiber, as in 
Lemma~\ref{expansionformallemma}.  There are $p$ contributions of $1/pq$ and one contribution of $(2-p)/pq$ for a total
of $2/pq$.   Equation~\ref{wformula} says that wild ramification 
can only increase this $2/pq$ to larger positive numbers,  and so all pushed-forward
Artin characters $a_{L/F,\fp}$ from $I$ are in the tame cone $T_+(S_{p^2})$.

For Series 1-3 and also for $Q_8$, this complication does 
not arise because the unique N-class in
$I^{\sharp0}$ is the only class of its order.  Hence the promotion process works:
\begin{cor}  Let $G$ be a group containing a subgroup $I$ of the form $F_{p,q} \times C_r$,
$C_p:C_{q^2}$, $C_{pq} \times C_p$ or $Q_8$.  Then $G$ does not satisfy the universal tame-wild
principle.  
\end{cor}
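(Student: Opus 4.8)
The plan is to reduce the corollary directly to the negative results already established for the small groups. Suppose $G$ contains a subgroup $I$ isomorphic to one of $F_{p,q} \times C_r$, $C_p:C_{q^2}$, $C_{pq} \times C_p$, or $Q_8$. In Theorem~\ref{pqrnegthm} and Proposition~\ref{q8prop} we exhibited a totally ramified local Galois extension $L/F$ of $p$-adic fields with $\gal(L/F) \cong I$ whose Artin character $a_{L/F,\fp} \in \Q(I^\sharp)^0$ has a negative coefficient $w_{L/F,\fp,\tau} < 0$ on the unique N-class $\tau \in I^{\sharp 0}$ when expanded in the tame basis $\{a_\sigma\}_{\sigma \in I^{\sharp 0}}$. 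Viewing $I$ as a subgroup of $G$, the same extension $L/F$ realizes $a_{L/F,\fp}$ as an Artin character in $\Q(G^\sharp)^0$ via the induced map $i : I^\sharp \rightarrow G^\sharp$, so $a_{L/F,\fp} \in W_+(G)$. To conclude that the tame-wild principle fails universally for $G$, it suffices to show that $a_{L/F,\fp} \notin T_+(G)$, i.e.\ that its expansion in $\{a_\rho\}_{\rho \in G^{\sharp 0}}$ still has a negative coefficient.

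The key step is the injectivity observation isolated in \S\ref{promote}: for each of these four groups $I$, the unique N-class $\tau$ is the only class of its order in $I^{\sharp 0}$ (order $r$ in Series~1, order $q$ in Series~2, order $q$ in Series~3, order $2$ in $Q_8$). Hence $\tau$ cannot be fused with any other class of $I^{\sharp 0}$ under $i$: an element $\rho \in i^{-1}(i(\tau))$ must have $\bar\rho = \bar\tau$, and since power-conjugacy classes in $I$ are determined by order in the relevant way (as recorded in the divisibility posets of \S\ref{pqr}-\S\ref{pqr5}), we get $\rho = \tau$. Therefore, in the expansion of $a_{L/F,\fp} \in \Q(G^\sharp)^0$ furnished by Lemma~\ref{artinexpandlemma} applied with inertia group $I \subseteq G$, the coefficient of $a_{i(\tau)}$ receives contributions only from the single fiber element $\sigma = \tau$, namely $\frac{1}{|I|} w_{L/F,\fp,\tau} [\tau] \bar\tau$. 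Since $[\tau]\bar\tau > 0$ and $|I| > 0$, this coefficient has the same sign as $w_{L/F,\fp,\tau}$, hence is negative.

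By the matrix criterion of \S\ref{calculations} (equivalently, by the observation following Lemma~\ref{artinexpandlemma} that $a_{L/F,\fp} \in T_+(G)$ forces all its tame-basis coefficients to be nonnegative), a negative coefficient shows $a_{L/F,\fp} \notin T_+(G)$. Thus $T_+(G) \subsetneq W_+(G)$, so the universal tame-wild principle fails for $G$, completing the proof. The only subtlety is the fusion control in the middle step; it is immediate here because the chosen N-class is order-isolated in $I^{\sharp 0}$, which is precisely why the four series in the corollary behave differently from Series~4, where the $p+1$ classes of order $p$ can fuse under $i$ and the negative contribution can be swamped by positive ones — exactly the phenomenon analyzed in \S\ref{promote}.
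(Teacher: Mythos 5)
Your proposal is correct and follows essentially the same route the paper takes in \S\ref{promote}: you combine the local counterexamples from Theorem~\ref{pqrnegthm} and Proposition~\ref{q8prop} with Lemma~\ref{artinexpandlemma}, and then observe that the unique N-class $\tau$ of $I$ is the only class of its order in $I^{\sharp 0}$, so the fiber $i^{-1}(i(\tau))$ is a singleton and the negative coefficient cannot be cancelled by fusion with other classes of $I^{\sharp 0}$. This is precisely the argument the authors sketch, including the contrast with Series~4 where fusion does swamp the negative term.
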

\noindent Since there are so many possibilities for $I$, the hypothesis holds for many $G$.  Moreover
the fact that it holds for a given $G$ is often easily verified.  For example, when studying $G$ one commonly has a list of maximal subgroups $H$, and one can often easily see that
at least one $I$ is in at least one of the $H$.  As another example, the presence 
of $C_{pq} \times C_p$ can often be read off from the divisibility
poset: suppose one has a class $\tau \in G^\sharp$ of order $\bar{\tau} = pq$ not dividing
a class of order $p^2 q$ but such that $p^2$ divides the numerator of  $|G|/|C_\tau|$.  
Then any representative $g$ of $\tau$ 
lies in a group of type $C_{pq} \times C_p$.   
This criterion is satisfied particularly often for $p=2$ and
some odd prime $q$.

\section{Comparing an algebra with its splitting field}
\label{comparing}
     In this section we return to a very concrete setting, considering
 types $(G,\phi_i,\phi_r)$ 
where $\phi_i$ comes from a faithful permutation representation
$i : G \subseteq S_n$ and $\phi_r$ is the regular character.  Thus we are
considering algebras $K=K_i$ of a specified Galois type
compared with their splitting fields $K^{\g} = K_r$.  

    In \S\ref{generalities} we introduce explicit notation for comparing two algebras
and in 
\S\ref{meanroot} we explain how it is sometimes best to highlight
root discriminants $\fr$ rather than discriminants $\fd$.   
The tame-wild principle in the notation set up then takes the following form:
\[
\fr_{K^{\g}/F}^{\underline{\alpha}(G,\phi_i,\phi_r)} \mid \fr_{K/F} \mid \fr_{K^\g/F}^{\underline{\omega}(G,\phi_i,\phi_r)}.
\]
We observe in \S\ref{right} that the right divisibility often trivially holds.
In \S\ref{elusive}, we give four examples where it holds non-trivially 
and one where it fails to hold.  In \S\ref{left} we show that the left divisibility
always holds, and discuss applications to number field tabulation.

 \subsection{Generalities} 
 \label{generalities}
 The case $r=2$ of just two algebras deserves special attention for at least 
three reasons.  First, hulls $T'_+(G,\phi_1,\phi_2) \subset \R^1$ are intervals while 
hulls for larger $r$ can have up to $|G^{\sharp 0}|$ vertices.   
Second, the inequality for each face of any  $T'_+(G,\phi_1,\dots, \phi_r)$ 
also comes from some $T'_+(G,\psi_1,\psi_2)$ with the new characters $\psi_j$
being certain sums of the old characters $\phi_i$.  Third,
it is the case which applies most directly to number field tabulation.

To present results coming from $r=2$ as explicitly as possible, we 
 let $\alpha = \alpha(G,\phi_1,\phi_2)$ and $\omega = \omega(G,\phi_1,\phi_2)$ 
be the left and right endpoints of the interval $T'_+(G,\phi_1,\phi_2)$.  
The tame-wild principle says that all local exponents satisfy
\begin{equation}
\label{addinequal}
\alpha c_{\fp}(K_2) \leq c_{\fp}(K_1) \leq \omega c_{\fp}(K_2).
\end{equation}
In this $r=2$ setting, the tame-wild principle breaks cleanly
into two parts: the left and right tame-wild principles 
respectively say that the left and right inequalities in
\eqref{addinequal} always hold.   Similarly, one
has the perhaps larger inertial interval $[\tilde{\alpha},\tilde{\omega}]$
and the perhaps even larger broad interval
$[\hat{\alpha},\hat{\omega}]$. 

To transfer the additive inequalities \eqref{addinequal} into the multiplicative language of
divisibility, we make use of the following formalism. 
Note that the torsion-free group $\mathcal{I}$ of fractional ideals of a local or global number field 
$F$ embeds into 
its tensor product over $\Z$ with $\Q$, a group we write as $\mathcal{I}^\Q$ to account for the fact that
$\mathcal{I}$ is written multiplicatively.    In $\mathcal{I}^\Q$, as our notation indicates, general rational exponents  on ideals 
are allowed.   Then \eqref{addinequal} corresponds to 
\begin{equation}
\label{multinequal}
\fd_{K_2/F}^\alpha \mid \fd_{K_1/F} \mid \fd_{K_2/F}^\omega,
\end{equation}
which makes sense for both local and global number fields.   In this
formalism, the relations of the introductory example take the form
$\fd_{K_6/F}^{1/3} \mid \fd_{K_5/F} \mid \fd_{K_6/F}$.

\subsection{Mean-root normalization and the comparison interval}  
\label{meanroot} It is sometimes insightful to switch to a slightly different
normalization.  We call this normalization mean-root normalization, with ``mean'' capturing
how additive quantities are renormalized and ``root'' capturing how multiplicative quantities
are renormalized.  

   If $K/F$ has degree $n$ and discriminant $\fd_{K/F}$ then
its {\emm root discriminant} is by definition $\fr_{K/F}=\fd_{K/F}^{1/n}$.    To
make this shift in our formalism, we simply 
replace all permutation characters $\phi_i$ by the scaled-down
quantities 
$\underline{\phi}_i = \phi_i/\phi_i(1)$.   One has mean tame conductors 
$\underline{c}_\tau(\phi_i) = (a_\tau,\underline{\phi}_i)$ as well as their analogs 
 $\underline{c}_I(\phi_i) = (a_I,\underline{\phi}_i)$ and 
$\underline{\hat{c}}_\tau(\phi_i) = (\hat{a}_\tau,\underline{\phi}_i)$.     
We always indicate this alternative convention by
underlining.  Thus the mean-root normalized tame hull for two characters
indexed by dimension is $\underline{T}'_+(G,\phi_n,\phi_m) = [\underline{\alpha},\underline{\omega}]$ 
where $\underline{\alpha}=m \alpha/n$ and  $\underline{\omega}=m \omega/n$.
The divisibility relation~\eqref{multinequal} becomes 
$\fr_{K_m/F}^{\underline{\alpha}} \mid \fr_{K_n/F} \mid \fr_{K_m/F}^{\underline{\omega}}$.  

    The {\emm comparison interval} $[\underline{\alpha},\underline{\omega}]$ just discussed
    supports an intuitive 
understanding of how ramification in $K_n/F$ and $K_m/F$ relate 
to each other.  Suppose, for example, that $K_m/K_n/F$ is a tower of fields so that one
has the standard divisibility relation
\begin{equation}
\label{rootdiscdivide}
\fr_{K_n/F} \mid \fr_{K_m/F}.
\end{equation}
Then, assuming $K_m/F$ is actually ramified,
 the ratio $\log(|\fr_{K_n/F}|)/\log(|\fr_{K_m/F}|) \in [0,1]$ can 
 be understood as the fraction of ramification in $K_m/F$ which
 is seen already in $K_n/F$.    If the corresponding
 tame-wild principle holds, then this quantity is guaranteed
 to be in $[\underline{\alpha},\underline{\omega}]$.   
 
     The mean-root normalization introduces a sense of 
absolute scale, with the number one playing a prominent role, 
as illustrated by the preceding paragraph and 
the next three subsections.   Assuming $\phi_n-\phi_m$ is
not a constant, one always has strict inequality 
$\underline{\alpha}<\underline{\omega}$.   
The failure of resolvent constructions from $(G,\phi_n)$-fields to $(G,\phi_m)$-fields to preserve ordering
by absolute discriminants is roughly speaking measured
by the size of $[\underline{\alpha},\underline{\omega}]$.  
For $\phi_n$ and $\phi_m$ coming from faithful transitive
permutation representations, a very common situation is
$\underline{\alpha} \leq 1 \leq \underline{\omega}$.   
This tendency gets stronger as $n$ and $m$ increase
to $|G|$.  For example, for $(A_5,\phi_{20},\phi_{30})$ 
the partition matrix is 
$\left( \begin{array}{ccc} 2^{10} & 3^{6} 1^2 & 5^4 \\2^{14} 1^2 &  3^{10} & 5^{6} \end{array} \right)$
and the comparison interval works out to $[9/10,15/14] \approx [0.90,1.07]$.

\subsection{The right tame-wild principle often holds for $(G,\phi_i,\phi_r)$}
\label{right}
Applying \eqref{rootdiscdivide}  in our setting gives 
\begin{equation}
\label{chain}
 \fr_{K/F} \mid \fr_{K^\g/F}
\end{equation}
when $K$ is a field.  
This relation holds also when $K$ is an algebra,
as can be seen by 
expressing $K$ as a 
product of fields and comparing each factor to the field $K^\g$.

The critical quantity is simply expressed as 
\begin{equation}
\label{omegaform}
\underline{\omega}(G, \phi_i, \phi_r) = 
\max_{\tau \in G^{\sharp 0}} \frac{\underline{c}_\tau(\phi_i)} {\underline{c}_\tau(\phi_r)}.
\end{equation}
The denominator depends only on the order $\bar{\tau}$ of $\tau$ via 
$\underline{c}_\tau(\phi_r) = (\bar{\tau}-1)/\bar{\tau}$.  
For the more complicated numerator, one has
$\underline{c}_\tau(\phi_i) \leq (\bar{\tau}-1)/\bar{\tau}$, with
 equality if and only if the partition $\lambda_\tau(\phi_i)$ has
the form $\bar{\tau}^{n/\bar{\tau}} = \bar{\tau} \cdots \bar{\tau}$. 
A permutation is {\emm semiregular} if all cycles have
the same length. Therefore,
$\underline{\omega}(G, \phi_r, \phi_i) \leq 1$, with equality if and only if $G$
contains a non-identity element which is semiregular.   Summarizing:
\begin{prop} 
\label{righttw}
Let $G \subseteq S_n$ be a permutation group containing a non-identity
semiregular element, $\phi_i$ the
given permutation character, and $\phi_r$ the regular character. Then 
the right tame-wild principle holds for $(G,\phi_i,\phi_r)$ with
\[
\underline{\omega}(G,\phi_i,\phi_r) = 1.
\]
However this principle is nothing more than the classical statement
that for any $(K,K^\g)$ of type $(G,\phi_i,\phi_r)$, one
has  $\fr_{K/F} \mid \fr_{K^\g/F}$.
\end{prop}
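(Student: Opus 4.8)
The proof assembles facts that are already in place. The plan is, first, to evaluate the right endpoint $\underline{\omega}(G,\phi_i,\phi_r)$ by means of the formula~\eqref{omegaform}, and, second, to recognize the resulting tame-wild assertion as the classical relation~\eqref{chain}. So the proposition carries essentially no new content beyond bookkeeping, which is exactly what its second sentence records.

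First I would compute $\underline{\omega}$. By~\eqref{omegaform} it is the maximum over $\tau \in G^{\sharp 0}$ of $\underline{c}_\tau(\phi_i)/\underline{c}_\tau(\phi_r)$. As recorded in the paragraph just before the proposition, the denominator equals $(\bar{\tau}-1)/\bar{\tau}$ and the numerator satisfies $\underline{c}_\tau(\phi_i) \leq (\bar{\tau}-1)/\bar{\tau}$, with equality exactly when the cycle partition $\lambda_\tau(\phi_i)$ has the semiregular form $\bar{\tau}^{n/\bar{\tau}}$. Hence every ratio is $\leq 1$, so $\underline{\omega}(G,\phi_i,\phi_r) \leq 1$ unconditionally. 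Under the hypothesis, choose a non-identity semiregular $g \in G$; all of its cycles share a common length, which must then be its order $\bar{\tau}>1$, so the power-conjugacy class $\tau$ of $g$ lies in $G^{\sharp 0}$ and realizes the ratio $1$. Therefore $\underline{\omega}(G,\phi_i,\phi_r) = 1$.

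It then remains to see that the right tame-wild principle itself holds for $(G,\phi_i,\phi_r)$. In mean-root normalization it asserts, for every prime $\fp$ and every pair $(K,K^{\g})$ of the given type, that $\underline{c}_\fp(K) \leq \underline{\omega}\,\underline{c}_\fp(K^{\g})$; since $\underline{\omega}=1$, this is precisely $\underline{c}_\fp(K) \leq \underline{c}_\fp(K^{\g})$, i.e.\ the root-discriminant divisibility $\fr_{K/F} \mid \fr_{K^{\g}/F}$ of~\eqref{chain}. That relation was observed at the start of this subsection to hold for every algebra $K$ of the given type — factor $K$ into a product of fields and apply the tower relation~\eqref{rootdiscdivide} to each factor against $K^{\g}$. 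Equivalently: $T'_+(G,\phi_i,\phi_r) \subseteq W'_+(G,\phi_i,\phi_r)$ and we have just shown both intervals have right endpoint $1$, so the wild hull does not protrude past the tame hull on the right. This is the right tame-wild principle, and by construction it coincides verbatim with the classical statement, as claimed.

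As for difficulty, there is essentially no obstacle; locating it is itself the point of the proposition. The one spot needing care is that the maximum in~\eqref{omegaform} is genuinely attained at value $1$, and this is where — and the only place where — the semiregularity hypothesis is used; one must also note that the element can be taken of order greater than $1$, so that its class lies in $G^{\sharp 0}$ and is not the excluded identity class.
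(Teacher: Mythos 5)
Your proof is correct and follows essentially the same route the paper takes: evaluate $\underline{\omega}$ via \eqref{omegaform} using the fact that $\underline{c}_\tau(\phi_i)/\underline{c}_\tau(\phi_r)\le 1$ with equality exactly at a non-identity semiregular class, then observe that the resulting inequality $\underline{c}_\fp(K)\le\underline{c}_\fp(K^\g)$ is verbatim the classical root-discriminant divisibility \eqref{chain} (extended from fields to algebras by factoring). The paper presents this argument in the paragraph preceding the proposition rather than inside a proof environment, so the proposition itself is just the summary, which is also how you treat it.
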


\subsection{Elusive groups} 
\label{elusive}
In the global setting, we are mainly interested in the case when 
$K$ is a field and thus $G$ is transitive.   A transitive permutation group 
which does not contain a non-identity
semiregular element is called an {\emm elusive group} 
\cite{elusive}.
So Proposition~\ref{righttw} 
is the best statement for non-elusive transitive groups, but the 
situation needs to be investigated further for elusive 
groups.   

  Elusive groups are aptly named in that they are relatively rare. The
smallest $n$ for which $S_n$ contains an elusive group is $n=12$.  There 
are five elusive groups in $S_{12}$, listed in Table~\ref{elusivetab}, all subgroups of the
 Mathieu group $M_{11}$ in its transitive degree twelve realization
12T272.   Here and in the sequel we use the $T$-notation for transitive permutation
groups introduced in \cite{conway-hulpke-mckay} and available online
in several places, including \cite{LMFDB}.  

 The following proposition treats these five groups.  
\begin{prop} \label{elusiveprop} The right tame-wild principle for $(G,\phi_i,\phi_r)$ holds for the elusive groups $12T46$, $12T84$,
$12T181$, and $12T272$ with $\underline{\omega}(G,\phi_i,\phi_r) = 20/21$.  Thus
the strengthening $\fr_{K/F} \mid \fr_{K^{\rm gal}/F}^{20/21}$ of \eqref{omegaform} holds
for these groups.     For $12T47$, one has $\underline{\omega}(12T47,\phi_i,\phi_r)=8/9$.
Extensions $(K_i,K_r)$ from \eqref{counter47} give an counterexample
to the tame-wild principle over $\Q(\sqrt{-3})$, but there is no  
counterexample
over $\Q$.  
\end{prop}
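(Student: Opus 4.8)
The plan is to establish the bound for $12T46$, $12T84$, $12T181$, $12T272$ by the inertial method, and to handle $12T47$ by an explicit local computation that exploits the fact that the prime $2$ of $\Q$ becomes inert in $\Q(\sqrt{-3})$. First I would read the partition matrices $P(G,\phi_i,\phi_r)$ off the transitive-group database and use \eqref{omegaform} together with $\underline{c}_\tau(\phi_r)=(\bar{\tau}-1)/\bar{\tau}$ to write $\underline{\omega}(G,\phi_i,\phi_r)=\frac{1}{12}\max_{\tau\in G^{\sharp 0}}\frac{\bar{\tau}\,c_\tau(\phi_i)}{\bar{\tau}-1}$. A direct inspection of the possible cycle types of a permutation of given order on $12$ letters shows that this quantity never exceeds $20/21$ unless the cycle type is semiregular, in which case it equals $1$, and that the value $20/21$ is attained precisely for the cycle type $8\cdot 4$. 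Since elusiveness forbids semiregular classes, $\underline{\omega}=20/21$ for the four groups that contain a class of type $8\cdot 4$, namely $12T46$, $12T84$, $12T181$, $12T272$; for $12T47$, which has no such class, the partition matrix instead yields $\underline{\omega}=8/9$, the maximum being realized at an order-$4$ class of cycle type $4^2 2^2$.

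For the four groups with $\underline{\omega}=20/21$ I would run the inertial method. By the slope expansion \eqref{Iexpand}, every Artin character $a_{L/F,\fp}$ is a nonnegative combination of formal Artin characters $a_I$ with $I$ an inertial subgroup (the higher ramification groups are $p$-groups, hence inertial), so since $(a_{L/F,\fp},\underline{\phi}_r)\ge 0$ it suffices to check $\underline{c}_I(\phi_i)\le\frac{20}{21}\underline{c}_I(\phi_r)$ for every inertial $I\subseteq G$, i.e.\ $7|I|(12-o_I)\le 80(|I|-1)$, where $o_I$ is the number of orbits of $I$ on the $12$ letters. This is automatic for $|I|\ge 27$, since then $\frac{20}{21}\cdot\frac{|I|-1}{|I|}\ge\frac{11}{12}\ge\frac{12-o_I}{12}$. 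For $|I|\le 26$ a short computation shows the inequality can fail only when $I$ acts transitively on the $12$ letters, forcing $12\mid|I|$, or when $I$ contains a non-identity semiregular element; elusiveness of $G$ kills the latter, and also the former when $|I|=12$ (transitive then means regular). The case $|I|=24$ needs one extra input: by the Sylow structure of $M_{11}$ (rank-two $2$-Sylow, no elements of order $12$ or $24$) the only inertial group of order $24$ that can occur is $SL_2(3)$, and its only transitive degree-$12$ action is the regular action of $A_4=SL_2(3)/Z$ on $12$ points, in which an order-$3$ element has cycle type $3^4$ — again excluded by elusiveness. Hence the bound holds for all inertial $I$, with equality exactly at $I=C_8$ of orbit type $8\cdot 4$, and the right tame-wild principle follows with $\underline{\omega}=20/21$, giving $\fr_{K/F}\mid\fr_{K^{\g}/F}^{20/21}$.

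For $12T47$ the inertial method already fails: $12T47$ contains a copy of $Q_8$ acting with orbit type $8\cdot 4$ (a regular orbit together with the orbit $Q_8/Z$), and this inertial subgroup has $\underline{c}_{Q_8}(\phi_i)/\underline{c}_{Q_8}(\phi_r)=(5/6)/(7/8)=20/21>8/9$, so one must examine genuine wild ramification. Because $2$ is inert in $\Q(\sqrt{-3})$, the completion there is the unramified quadratic extension $\Q_4$ of $\Q_2$, over which the octic tables cited in \S\ref{pqr5} supply a totally ramified $Q_8$-extension of slope content $[2,2,2.5]$; the jump of size $4$ at slope $2$ sends $Q_8$ straight to its center $Z$, so the Artin character is $a=2a_{Q_8}+\tfrac12 a_Z$, and with the $8\cdot 4$ orbit structure one computes $(a,\underline{\phi}_i)=11/6$ and $(a,\underline{\phi}_r)=2$, a ratio of $11/12>8/9$. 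Packaging this local behavior into a global extension with group $12T47$ is exactly what the explicit pair \eqref{counter47} over $\Q(\sqrt{-3})$ does; one then verifies its Galois group, discriminants and slope content by direct computation. Over $\Q$ there is no counterexample: the left tame-wild principle holds unconditionally (\S\ref{left}), so only the right inequality with $\underline{\omega}=8/9$ is at stake, and the wild ramification of a $12T47$-extension of $\Q$ lies over $2$ and $3$. Over $\Q_2$ there is no totally tamely ramified $C_3$-extension, so the relevant inertia is $Q_8$ or a subgroup of it; the four totally ramified $Q_8$-extensions of $\Q_2$ all have slope content $[2,3,4]$, for which $a=2a_{Q_8}+a_{C_4}+a_Z$ gives ratio exactly $8/9$, and the analogous (easier) analysis over $\Q_3$ stays strictly below $8/9$. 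Hence the tame-wild principle holds for $(12T47,\phi_i,\phi_r)$ over $\Q$.

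I expect the main difficulty to be the finite case analysis underlying the inertial method: identifying which inertial subgroups of $M_{11}$ of order below $27$ actually occur inside each of the four transitive groups, computing their orbit partitions on the twelve letters, and pinning the order-$24$ case on the Sylow structure of $M_{11}$. A second, more delicate point is the assertion that separates the two base fields — that a $Q_8$-extension realizes slope content $[2,3,4]$ over $\Q_2$ but can realize $[2,2,2.5]$ over $\Q_4$ — which rests on a careful reading of the $p$-adic octic tables together with the base-change scaling $s_i=1+t(\sigma_i-1)$, and which is precisely what makes the counterexample appear over $\Q(\sqrt{-3})$ while none exists over $\Q$.
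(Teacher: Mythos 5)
Your overall strategy matches the paper's — the inertial method for the four groups and a slope-content computation plus the explicit pair from \eqref{counter47} for $12T47$ — and your local computations ($\underline{c}'_{Q_8}=20/21$ from orbit type $84$, the ratio $11/12$ for slope content $[2,2,2.5]$, the ratio exactly $8/9$ for $[2,3,4]$) are correct. But the way you run the inertial method for $12T46$, $12T84$, $12T181$, $12T272$ is substantially different from the paper's and considerably more laborious. You verify $7|I|(12-o_I)\le 80(|I|-1)$ for all inertial $I$ by an order-by-order split, with an automatic regime $|I|\ge 27$ and an asserted finite check for $|I|\le 26$ that relies on sub-claims (``the inequality can fail only when $I$ acts transitively or contains a semiregular element'') and a dedicated Sylow argument for $|I|=24$. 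The paper instead disposes of all inertial $I$ at once: U-subgroups are already known to lie in the tame cone by Theorem~\ref{easy-cgroup}, while any inertial N-subgroup $I$ has $|I|\ge 8$ and — because every transitive degree-$12$ subgroup of the elusive $M_{11}$ is again elusive and hence not inertial — must act intransitively, so $c_I(\phi_i)\le 10$; combining, $\underline{c}'_I\le(10/12)/(7/8)=20/21$ with no further casework. The two arguments are both based on projectivized formal conductors, but the paper's two-line uniform bound buys a proof that requires no enumeration of inertial subgroups, while your version, even if correct, leaves several finite checks (``a short computation shows\dots'') and the $|I|=24$ identification unverified in the write-up; if you want to keep your route, you should actually carry out those cases. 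For $12T47$ your treatment parallels the paper's; the only gap there is that your claim that the analysis over $\Q_3$ ``stays strictly below $8/9$'' is asserted rather than shown.
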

\begin{proof}

The part below the line of Table~\ref{elusivetab} supports applying the
broad and inertial methods  for $12T272 \cong M_{11}$.  
Thus the line labeled $\tau$ lists out the seven elements of $12T272^{\sharp0}$.   The  
next two lines gives the corresponding dodecic partitions $\lambda_\tau(\phi_i)$ 
and conductors $c_\tau(\phi_i) = 12 \underline{c}_\tau(\phi_i)$ respectively.  
The next lines give 
$\underline{c}_\tau(\phi_r)=c_\tau(\phi_r)/|M_{11}|$ and
$\underline{c}'_\tau = \underline{c}_\tau(\phi_i)/\underline{c}_\tau(\phi_r)$.  
Thus the comparison interval is $[\underline{\alpha},\underline{\omega}] = [2/3,20/21]$.  

\begin{table}[htb]
{\renewcommand{\arraycolsep}{2.5pt}
\[
\begin{array}{rl|ccccccc|cc}
12T46 & \cong C_3^2:Q_8&  \surd & \surd & \surd &  &  & \surd &  \\
12T47 & \cong M_9  &  \surd & \surd & \surd &  &  &  &  \\
12T84 & \cong C_3^2:\hat{Q}_8 &   \surd & \surd & \surd &  & \surd & \surd &  \\
12T181  & \cong M_{10} & \surd & \surd & \surd & \surd &  & \surd &  \\
12T272  & \cong M_{11} & \surd & \surd & \surd & \surd & \surd  & \surd & \surd  \\
\hline
& \tau & 2A & 3A & 4A & 5A & 6A & 8A & 11A & Q_8 & I \\
&\lambda_\tau(\phi_{i})  & 2^4 1^4 & 3^3 1^3 & 4^2 2^2 & 5^2 1^2 & 6321 & 84 & (11) 1 & 84    \\
&c_\tau(\phi_{i}) & 4 & 6 & 8 & 8 & 8 & 10 & 10 & 10 & \leq 10 \\
&\underline{c}_\tau(\phi_r)  & 1/2 & 2/3 & 3/4 & 4/5 & 5/6 & 7/8 & 10/11 & 7/8 & (|I|-1)/|I|  \\
&\underline{c}'_\tau   & 2/3 & 3/4 & 8/9 & 5/6 & 4/5 & 20/21 & 11/12 & 20/21& \leq 20/21
\end{array}
\]
}
\caption{\label{elusivetab} Information used in the proof of Proposition~\ref{elusiveprop} }
\end{table}

One inertial subgroup of $12T272$ is $Q_8$, which has orbit partition $84$.   
As indicated in the second-to-last column of Table~\ref{elusivetab}, its associated quantity
is $\underline{c}'_{Q_8}= 20/21$, which is the right endpoint of $[2/3,20/21]$.
In general, the difficulty with the inertial method is that there can be many inertial N-subgroups $I$
to inspect.  However here we can treat them all at once as follows.  Since none of the
elusive groups from the complete list are themselves inertial groups, $I$ must
act intransitively and so $c_I(\phi_i) \leq 10$.   Also $\underline{c}_I(\phi_r) = (|I|-1)/|I| \geq 7/8$,
since N-groups have order at least~$8$.  So, as indicated by the table, 
$\underline{c}'_I \leq 20/21$.   Thus our initial case $Q_8$ was in fact the worst case, 
and the right tame-wild principle holds for $(M_{11},\phi_i,\phi_r)$.    

The part of the table above the line gives the partitions which arise for all the $G$,
as a subset of those that we have listed for $11T272$.    The smaller groups 
$12T46$, $12T84$, and $12T181$ still have elements of cycle type 84, and so 
the same argument goes through for them, proving the tame-wild principle
for $(G,\phi_i,\phi_r)$ in these cases.  Note that our uniform treatment
of all the $G$ uses that $\underline{c}_\tau(\phi_r)$ is independent
of $G$; in contrast, the unnormalized quantity $c_\tau(\phi_r)$ depends 
on $G$.    

A counterexample seems likely for $G=12T47$ because it contains $Q_8$ and its comparison interval
is only $[\underline{\alpha},\underline{\omega}] = [2/3,8/9]$, with $8/9$ being considerably less than $20/21$.  
 However, as discussed in reference to \eqref{q8counters}, there are no quaternionic extensions
of $\Q_2$ giving counterexamples to the universal tame-wild principle for $Q_8$.   Other candidate
$I$ do not work either, and we are forced to leave $\Q$ as a ground field.  

Our counterexample comes from fields $K_n=\Q[x]/f_n(x)$ with discriminants $D_n \in \Z$ and 
Galois groups $G_n = \gal(K_n^\g/\Q)$ as follows:  
\begin{equation}
\label{counter47}
\begin{array}{r | lrl}
n & f_n(x) & D_n & G_n  \\
\hline
8 &                  x^8+6 x^4-3 & - 2^{16} 3^7 & \hat{Q}_8  \\
9&               x^9-3 x^8+18 x^5+18 x^4-27 x+9 &  -2^{16} 3^{15} & 9T19  \\
12&         x^{12}-6 x^{10}-4 x^9+12 x^7-36 x^5+30 x^4+8 x^3-8      & \;\;\; 2^{22} 3^{18} & 12T84 
\end{array}
\end{equation}
The overgroup $12T84 \cong C_3^2:\hat{Q}_8 \supset 12T47$ was chosen because it contains
not just $Q_8$ but also $\hat{Q}_8$. 
The nonic group  $9T19$ is a lower
degree realization
of $12T84$, where the isomorphism with $ C_3^2:\hat{Q}_8$ is naturally 
realized. 
The field $K_8$ was chosen as a strong candidate from which to build a counterexample,
because $\gal(K_8^\g /\Q)$ is its own decomposition group 
 with slope content $[2,2,2.5]^2$ as in \eqref{q8counters}.   The field $K_9$ 
was extracted from the database \cite{jr-global-database} as a $9T19$ field
with $K_8$ as a resolvent,  and then $K_{12}$ was obtained from $K_9$ by
resolvent calculations.   

The splitting field $K_{12}^\g$ contains $\Q(\sqrt{-3})$ with $\gal(K_{12}^\g/\Q(\sqrt{-3}))=12T47$
by construction.  The root discriminant of $K_{12}^\g$ is 
$2^{2} 3^{127/72}$, as computed by using the website of \cite{jr-local-database} to analyze 
ramification in $K_9/\Q$.    Here the exponent $2$ can be confirmed from a standard computation
associated with the slope content $[2,2,2.5]^2$, namely $2/8+2/4+2.5/2=2$.  On the other
hand $K_{12}$ has root discriminant $2^{22/12} 3^{18/12}$.  The quotient $(22/12)/2 = 11/12$
is to the right of the root-normalized tame hull $[\underline{\alpha},\underline{\omega}]=[2/3,8/9]$,
giving a counterexample to the right tame-wild principle for $(12T47,\phi_i,\phi_r)$ over
$\Q(\sqrt{-3})$.  
\end{proof}

\subsection{The left tame-wild principle always holds for $(G,\phi_i,\phi_r)$}
\label{left}

The following theorem shows that an  important part of the tame-wild principle holds for all 
finite groups $G$.  

\begin{thm}  Let $G \subseteq S_n$ be any permutation group, $\phi_i$ the
given permutation character, and $\phi_r$ the regular character.   Let $\fix{G}$ be 
the maximal number of fixed points of a non-identity element of $G$.  Then 
the left tame-wild principle holds for $(G,\phi_i,\phi_r)$ with
\[
\underline{\alpha}(G,\phi_i,\phi_r) = 1 - \frac{\fix{G}}{n}.  
\]
Thus for any $(K,K^{\rm gal})$ of type $(G,\phi_i,\phi_r)$, 
one has $\fr_{K^{\g}/F}^{1-\fix{G}/n} \mid \fr_{K/F}$.
\end{thm}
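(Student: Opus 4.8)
The plan is to reduce the assertion to an inequality about formal Artin characters of subgroups of $G$ and then to settle that inequality by Burnside's lemma. Since a discriminant is a product over primes of $F$ and each local exponent $c_\fp(K_j)=(a_{L/F,\fp},\phi_j)$ is computed from a local Artin character, it is enough to show that every Artin character $a=a_{L/F,\fp}\in\Q(G^\sharp)^0$ satisfies
\[
(a,\phi_i)\;\ge\;\alpha\,(a,\phi_r),\qquad\alpha=\frac{n-\fix{G}}{|G|},
\]
this being the unnormalized form of the claimed $\underline{\alpha}=1-\fix{G}/n$. By the expansion \eqref{Iexpand}, $a$ is a nonnegative combination $\sum_{i=1}^{k}(s_i-s_{i-1})\,a_{I^{s_i}}$ of formal Artin characters of the nontrivial subgroups $I^{s_i}$ appearing in \eqref{inertialchain}, and the inner product is linear in its first slot, so it suffices to prove $(a_H,\phi_i)\ge\alpha\,(a_H,\phi_r)$ for each nontrivial subgroup $H\seq G$.

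Writing $o_H$ for the number of orbits of $H$ on the $n$ points carrying $\phi_i$, the formulas of \S\ref{chartheory} give $(a_H,\phi_i)=c_H(\phi_i)=n-o_H$ and $(a_H,\phi_r)=a_H(e)=|G|(|H|-1)/|H|$, so the inequality to be proved is the purely combinatorial
\[
n-o_H\;\ge\;\Bigl(1-\tfrac1{|H|}\Bigr)(n-\fix{G}).
\]
Here Burnside's lemma enters: from $\sum_{h\in H}|\mathrm{Fix}(h)|=|H|\,o_H$, and subtracting the identity's contribution $n$, the average of $|\mathrm{Fix}(h)|$ over the $|H|-1$ non-identity elements of $H$ equals $(|H|\,o_H-n)/(|H|-1)$. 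Hence some $h_0\in H\setminus\{e\}$ has $|\mathrm{Fix}(h_0)|\ge(|H|\,o_H-n)/(|H|-1)$, and since $\fix{G}\ge|\mathrm{Fix}(h_0)|$ by definition of $\fix{G}$, we get $(|H|-1)\fix{G}\ge|H|\,o_H-n$, which rearranges exactly to the displayed inequality.

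Applying this with $H=\gen{g}$ cyclic already shows that every tame ratio $\underline{c}_\tau(\phi_i)/\underline{c}_\tau(\phi_r)$ is at least $1-\fix{G}/n$, so the left endpoint $\underline{\alpha}$ of the tame hull is $\ge 1-\fix{G}/n$. To see it equals $1-\fix{G}/n$, choose a non-identity $g\in G$ with $|\mathrm{Fix}(g)|=\fix{G}$ and pass to a suitable prime-power power; passing to a power only enlarges the fixed-point set, so we may assume $g$ has prime order $p$ and still fixes $\fix{G}$ points. Then $g$ is a product of $c=(n-\fix{G})/p$ disjoint $p$-cycles, its class $\tau$ has $\underline{c}_\tau(\phi_i)=c(p-1)/n$ and $\underline{c}_\tau(\phi_r)=(p-1)/p$, and the ratio is $cp/n=1-\fix{G}/n$. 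With $\underline{\alpha}=1-\fix{G}/n$ thus pinned down, the inequality proved above for all nontrivial $H$ is precisely the left tame-wild principle, and it globalizes in the usual way to $\fr_{K^{\g}/F}^{1-\fix{G}/n}\mid\fr_{K/F}$.

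The only real obstacle is spotting the averaging step: once one notices that $n-\fix{G}\le\min_{h\in H\setminus\{e\}}\bigl(n-|\mathrm{Fix}(h)|\bigr)$ and that Burnside makes the average of $n-|\mathrm{Fix}(h)|$ over $H\setminus\{e\}$ equal to $|H|(n-o_H)/(|H|-1)$, the rest is bookkeeping with the inner-product identities of \S\ref{chartheory}.
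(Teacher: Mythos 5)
Your proof is correct, and it takes a genuinely different route from the paper's. The paper proves this by the \emph{broad method}: for each cyclic class $\tau$ of order $t$ it directly compares the projectivized preconductor $\hat{c}_\tau(\phi_i)/\hat{c}_\tau(\phi_r)$ to the projectivized conductor $c_\tau(\phi_i)/c_\tau(\phi_r)$, shows via a short telescoping manipulation that the former is always at least the latter, and concludes that the broad and tame cones share their left endpoint, so that $\alpha$ can be read off from the preconductor formula $(n-m_1)/|G|$. You instead prove the left inequality at the level of formal Artin characters $a_H$ for \emph{every} nontrivial subgroup $H\seq G$, using \eqref{Iexpand} to reduce the wild Artin character to a nonnegative combination of such $a_H$, and then settling the resulting combinatorial inequality $n-o_H \geq (1-1/|H|)(n-\fix{G})$ by Burnside averaging. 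This is essentially the inertial method, but pushed to all subgroups rather than just inertial ones; since every $a_H$ lies in the broad cone, what you prove is a priori sandwiched between the paper's conclusion and the left tame-wild principle itself, and both routes land in the same place. The paper's argument is lighter (no Burnside, just arithmetic on a single cycle type), while yours gives a cleaner conceptual explanation of where the bound comes from --- the extremal fixed-point count $\fix{G}$ dominates the \emph{average} fixed-point count over $H\setminus\{e\}$, and Burnside converts that average into the orbit count $o_H$ that controls $(a_H,\phi_i)$. One small remark: your passage to a prime-order power to exhibit the minimizing tame class and so pin down $\underline{\alpha}=1-\fix{G}/n$ is exactly the observation the paper makes implicitly when it computes $\alpha = \min_\tau (n-m_1)/|G|$, and the maximality of $\fix{G}$ is what guarantees the power still fixes exactly $\fix{G}$ points, as you note.
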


\begin{proof}  We apply the broad method. 
Let $\tau$ be an arbitrary element of $G^{\sharp0}$ 
and call its order $t$.   Consider 
the corresponding column ${\left( \!\!\! \begin{array}{c} { \lambda} \\ { \Lambda} \end{array} \! \!\! \right)}$
 of the partition matrix $P(G,\phi_i,\phi_r)$. 
Then $\lambda = \prod_{k=1}^t k^{m_k}$ is some partition of $n$ and   
$\Lambda = t^{|G|/t}$ is the corresponding partition of $|G|$.  

The projective matrices $T'(G,\phi_i,\phi_r)$ and $\hat{T}'(G,\phi_i,\phi_r)$ have just one
row each.  The entries in the $\tau$ column are respectively $c(\lambda)/c(\Lambda)$ and
$\hat{c}(\lambda)/\hat{c}(\Lambda)$.  Their difference is positive:
   \begin{eqnarray*}
 \frac{\hat{c}(\lambda)}{\hat{c}(\Lambda)} -  \frac{c(\lambda)}{c(\Lambda)}   & = &  \frac{\sum_{k=2}^t m_k k}{|G|}- \frac{\sum_{k=2}^t m_k (k-1)}{|G| (t-1)/t} \\
&=& \frac{1}{|G|} \sum_{k=2}^t m_k \frac{k(t-1)}{t-1}-\frac{1}{|G|} \sum_{k=2}^t m_k \frac{t(k-1)}{t-1}\\
& = & \frac{1}{|G|} \sum_{k=2}^t m_k \frac{(k t-k) - (k t - t)}{t-1} \\
& = & \frac{1}{|G|} \sum_{k=2}^t m_k \frac{t-k}{t-1} \\
& \geq & 0.
  \end{eqnarray*}
  Thus $\hat{c}(\lambda)/\hat{c}(\Lambda) \geq c(\lambda)/c(\Lambda)$ and so
  certainly all the $\hat{c}(\lambda)/\hat{c}(\Lambda)$ are at least 
  \[
  \alpha(G,\phi_i,\phi_r) = \min_{\tau \in G^{\sharp 0}} \frac{c_\tau(\lambda)}{c_\tau(\Lambda)}.
  \]
  Thus the left tame-wild principle holds for $(G,\phi_i,\phi_r)$.  
  
  Since the left tame-wild principle holds for $(G,\phi_i,\phi_r)$, we can compute $\alpha(G,\phi_i,\phi_r)$ 
  using  
  $\hat{c}_\tau$ rather than $c_\tau$, giving 
  \[
  \alpha(G,\phi_i,\phi_r) = \min_{\tau \in G^{\sharp 0}} \frac{\hat{c}_\tau(\lambda)}{\hat{c}_\tau(\Lambda)} = \min_{\tau \in G^{\sharp0}} \frac{n-m_1}{|G|} =  \frac{n-\fix{G}}{|G|}.
  \]
  Switching to the mean-root normalization gives $\underline{\alpha}(G,\phi_i,\phi_r)=1 - \fix{G}/n$. 
  \end{proof}

\subsubsection*{Number field tabulation}  For certain solvable transitive groups $G \subset S_n$, 
the techniques
of \cite{jj-wallington} let one compute all degree $n$ fields $K$ of type $G$
where $|\fr_{K^\g/\Q}|$ is at most some constant $\beta$.   Then 
the theorem just proved can be applied through
its corollary  
$|\fr_{K^{\g}/\Q}^{1-\fix{G}/n}| \leq |\fr_{K/\Q}|$ to
obtain all $K$ with  $|\fr_{K/\Q}|$ at most $B = \beta^{1-\fix{G}/n}$.  
This computation is 
carried out in \cite{jj-nonic} for the primitive nonic groups
$9T9$, $9T14$, $9T15$, $9T16$,
  $9T23$, and $9T27$ to obtain the corresponding nonic fields with 
  smallest absolute discriminant.    This particular application
  served as the catalyst for the present paper.

\section{Examples and counterexamples}
\label{general}  

  The positive and negative results of the previous sections give one a good 
idea of the extent to which the tame-wild principle holds and how it 
can be applied.  We now refine this picture, by considering various $(G,\phi_1,\dots,\phi_r)$ of interest
and determining whether the tame-wild principle holds.
In \S\ref{tauandI}, we give examples illustrating the broad method and the inertial method. 
In \S\ref{bestcounter}, we conclude by arguing that counterexamples to the
tame-wild principle from pairs $(K_1,K_2)/\Q$ of number fields are not easily found, 
but present one such counterexample with Galois group $12T112$ 
of order $192$.  

\subsection{The broad and inertial methods}  We illustrate the two methods of \S\ref{proofmethods}
with
positive results for three N-groups.  
\label{tauandI}
    
    \subsubsection*{The broad method for $(\mbox{\rm Aff}_3(\F_2), \phi_7,\phi_8,\phi_{8a},\phi_{8b})$}
    \label{aff3}
 The group $\mbox{Aff}_3(\F_2)$ provides a simple illustration of the 
 broad method in the setting $r=4$.  It has five non-trivial small permutation 
 representations $\rho_{7a}$, $\rho_{7b}$, $\rho_{8}$, $\rho_{8a}$, 
 $\rho_{8b}$, with images the permutation groups $7T5$, $7T5$, $8T37$, $8T48$, 
 $8T48$.   
 The first three representations are 
 through the quotient $GL_3(\F_2) \cong PGL_2(\F_7)$ while the last two
 are faithful.   
  The representations
  $\rho_{7a}$ and $\rho_{7b}$ share a common character $\phi_7$.  They are thus 
  arithmetically equivalent and we call them identical twins.  The representations
  $\rho_{8a}$ and $\rho_{8b}$ have different characters $\phi_{8a}$ and
  $\phi_{8b}$ and so we call them fraternal twins.  
  The four characters $\phi_7$, $\phi_8$, $\phi_{8a}$,
  and $\phi_{8b}$ are linearly independent.

  \begin{figure}[htb]
  \[
\begin{array}{c|ccccccccc}
\tau &    2A & 2B & 2C & 3A & 4A & 4B & 4C & 6A & 7A \\
 \hline
   &    1^7 & 22111 & 22111 & 331 & 22111 & 421 & 421 & 331 & 7 \\
 P &    1^8 & 2222 & 2222 & 3311 & 2222 & 44 & 44 & 3311 & 71 \\
  &    2222 & 221111 & 2222 & 3311 & 44 & 4211 & 44 & 62 & 71 \\
  &    2222 & 2222 & 221111 & 3311 & 44 & 44 & 4211 & 62 & 71 \\
\hline
           &  0 & 4 & 4 & 6 & 4 & 6 & 6 & 6 & 7 \\
\hat{T} &                  0 & 8 & 8 & 6 & 8 & 8 & 8 & 6 & 7 \\
+ &                  8 & 4 & 8 & 6 & 8 & 6 & 8 & 8 & 7 \\
 &                  8 & 8 & 4 & 6 & 8 & 8 & 6 & 8 & 7 \\
                   \hline
   &                 0 & 2 & 2 & 4 & 2 & 4 & 4 & 4 & 6 \\
T &                0 & 4 & 4 & 4 & 4 & 6 & 6 & 4 & 6 \\
\bullet  &                4 & 2 & 4 & 4 & 6 & 4 & 6 & 6 & 6 \\
  &                 4 & 4 & 2 & 4 & 6 & 6 & 4 & 6 & 6 \\
\multicolumn{10}{c}{\;}
   \end{array}
\]
\includegraphics[width=4in]{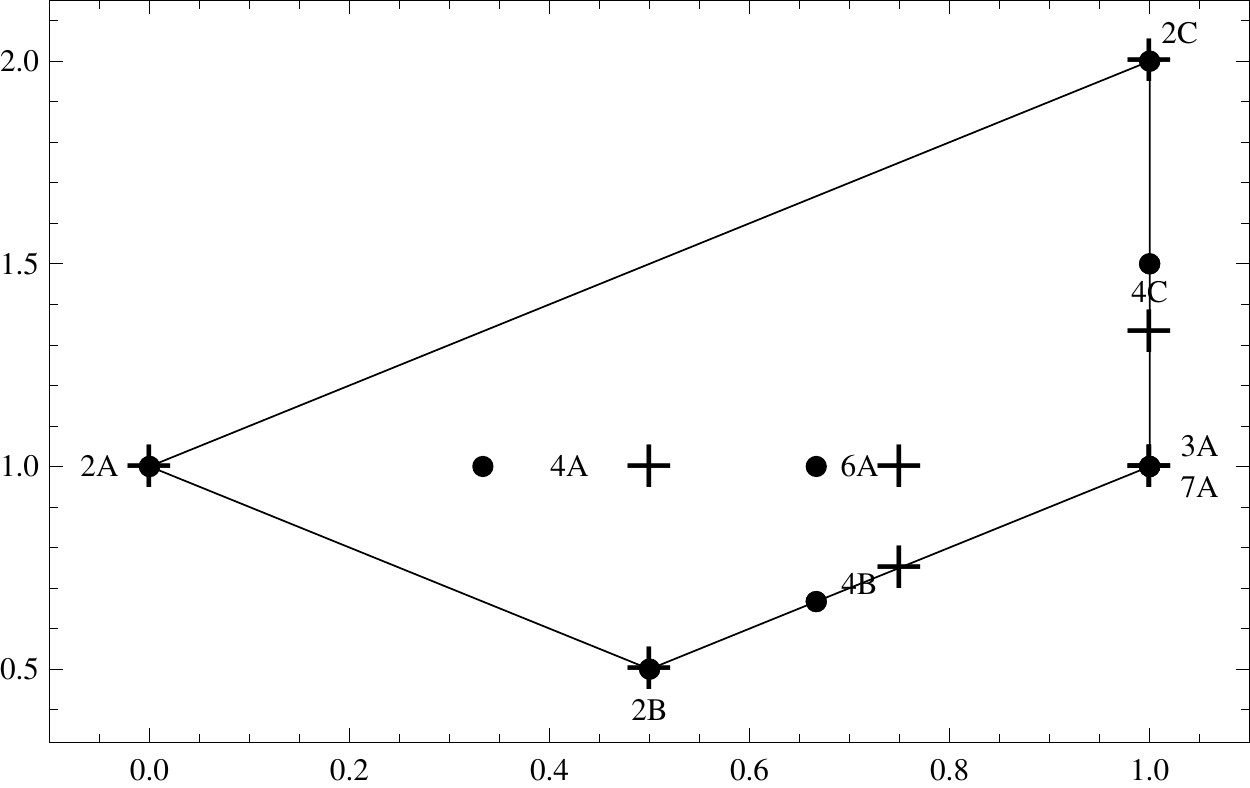}
\caption{\label{pictg1344}  Top: The partition matrix, broad matrix, and tame matrix
for $(\mbox{Aff}_3(\F_2),\phi_7,\phi_8,\phi_{8a},\phi_{8b})$.   Bottom: the broad hull and
tame hulls coinciding after removing $\phi_8$ from consideration, proving the tame-wild principle for 
$(\mbox{Aff}_3(\F_2),\phi_7,\phi_{8a},\phi_{8b})$.
}
\end{figure}

Figure~\ref{pictg1344} first presents the partition matrix 
  $P = P(\mbox{Aff}_3(\F_2), \phi_7,\phi_8,\phi_{8a},\phi_{8b})$ and 
  the broad and tame matrices derived from it.   
  For visualization purposes, it then drops consideration of $\phi_8$.
    After this projection, it 
  plots the columns of $\hat{T}'$ as $+$'s and those of ${T}'$ 
  as $\bullet$'s.  Since the $+$'s are in the hull ${T}'_+$ of the
  $\bullet$'s, the tame-wild principle holds for 
  $(\mbox{Aff}_3(\F_2), \phi_7,\phi_{8a},\phi_{8b})$.  
  Working more algebraically, as described in \S\ref{calculations},
  one can verify the analogous
  convexity assertion in the presence of $\phi_8$, giving the first sentence of the following
  result.  
    \begin{prop} The tame-wild principle holds for 
  $(\mbox{\rm Aff}_3(\F_2), \phi_7, \phi_8,\phi_{8a},\phi_{8b})$.   In particular,
  to find all $8T48$ extensions with $|\fd_{K_{8a}/F}| \leq B$, one need look
  only at $7T5$ extensions with $|\fd_{K_{7}/F}| \leq B$ and select from among
  the octic resolvents of their $14T34$ quadratic overfields.
  \end{prop}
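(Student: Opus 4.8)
The plan is to establish the tame-wild principle for $(\mbox{Aff}_3(\F_2),\phi_7,\phi_8,\phi_{8a},\phi_{8b})$ by the broad method of \S\ref{proofmethods}, and then to read off the tabulation consequence from the two-character subcase $(\mbox{Aff}_3(\F_2),\phi_7,\phi_{8a})$. Writing $V=\langle\phi_7,\phi_8,\phi_{8a},\phi_{8b}\rangle$, which is four-dimensional since the four characters are linearly independent, the cones $T_+(G,V)\subseteq W_+(G,V)\subseteq\tilde T_+(G,V)\subseteq\hat T_+(G,V)$ of \eqref{mainchain2} are realized inside $\R^4$ with generators the columns of the matrices $T$, $\tilde T$, $\hat T$ displayed in Figure~\ref{pictg1344}. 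It therefore suffices to verify $\hat T_+(G,V)\subseteq T_+(G,V)$, that is, that every column of $\hat T$ is a non-negative combination of columns of $T$. For the five classes of prime order, namely $2A$, $2B$, $2C$, $3A$, $7A$, this is immediate: by the identity $\hat a_\tau=\tfrac{\bar\tau}{\bar\tau-1}a_\tau$ recorded after \eqref{aandahat} (the content of Theorem~\ref{broadprime}), the $\tau$-column of $\hat T$ is already a positive scalar multiple of the $\tau$-column of $T$.

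The work thus reduces to the four composite-order classes $4A$, $4B$, $4C$, $6A$, and this is where the one real obstacle lies. The \emph{canonical} expansion \eqref{aandahat} of $\hat a_\tau$ in the basis $\{a_\sigma\}$ carries a negative coefficient when $\tau\in\{4A,6A\}$ (for instance $\hat a_{4A}=2a_{4A}-a_{(4A)^2}$), so that expansion does not witness membership in the tame cone; one must instead search among the nine columns of $T$ for an alternative representation with all coefficients non-negative. Carrying out this search by inspection of Figure~\ref{pictg1344}, I expect to obtain the relations
\[
\hat T_{4A}=T_{2B}+T_{2C}+\tfrac12 T_{2A},\quad \hat T_{4B}=T_{4B}+\tfrac12 T_{3A},\quad \hat T_{4C}=T_{4C}+\tfrac12 T_{3A},\quad \hat T_{6A}=T_{6A}+\tfrac12 T_{3A},
\]
each of which is a one-line check of four integer coordinates. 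Once these four identities are in hand, $\hat T_+(G,V)=T_+(G,V)$, so by \eqref{mainchain2} the wild cone $W_+(G,V)$ is squeezed between equal cones and $T_+(G,V)=W_+(G,V)$, which is the asserted tame-wild principle. (Note that Figure~\ref{pictg1344} already depicts the corresponding equality of hulls after projecting away $\phi_8$; the value of the explicit identities above is that they also account for the $\phi_8$-coordinate, so nothing further is needed to reinstate that character.)

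For the ``in particular'' clause I would first use that the tame-wild principle for a subspace $V$ implies it for every subspace of $V$ under orthogonal projection, as noted in \S\ref{abstract}, and apply this to $V'=\langle\phi_7,\phi_{8a}\rangle$. Since $\rho_{8a}$ is faithful, the $\phi_{8a}$-row of $T$ is strictly positive, so the projectivized tame hull is an interval $[\underline\alpha,\underline\omega]$; comparing the $\phi_7$- and $\phi_{8a}$-rows of $T$ entry by entry gives $c_\tau(\phi_7)\le c_\tau(\phi_{8a})$ for every $\tau\in G^{\sharp0}$, hence $\omega(G,\phi_7,\phi_{8a})=1$. Therefore every pair $(K_7,K_{8a})/F$ of type $(G,\phi_7,\phi_{8a})$ satisfies $c_\fp(K_7)\le c_\fp(K_{8a})$ at all primes, so $\fd_{K_7/F}\mid\fd_{K_{8a}/F}$ and in particular $|\fd_{K_7/F}|\le|\fd_{K_{8a}/F}|$. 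Combined with the standard resolvent bookkeeping — an $8T48$-field $K_{8a}$ produces, through the quotient $\mbox{Aff}_3(\F_2)\twoheadrightarrow GL_3(\F_2)$, its $7T5$-resolvent $K_7$, and is recovered (together with its fraternal twin $K_{8b}$) as an octic resolvent of a $14T34$ quadratic overfield of $K_7$, exactly as in the quintic-sextic model of \S\ref{intronf} — this shows that enumerating $7T5$-fields with $|\fd_{K_7/F}|\le B$ already captures every $8T48$-field of discriminant norm at most $B$, which is the stated procedure.
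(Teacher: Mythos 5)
Your proof is correct and essentially reproduces the paper's own argument via the broad method. The four decompositions you give, $\hat T_{4A}=T_{2B}+T_{2C}+\tfrac12 T_{2A}$, $\hat T_{4B}=T_{4B}+\tfrac12 T_{3A}$, $\hat T_{4C}=T_{4C}+\tfrac12 T_{3A}$, and $\hat T_{6A}=T_{6A}+\tfrac12 T_{3A}$, all verify against the matrices in Figure~\ref{pictg1344} and make explicit the convexity check in the presence of $\phi_8$ that the paper leaves to the reader; your derivation of $\fd_{K_7/F}\mid\fd_{K_{8a}/F}$ from $\omega(G,\phi_7,\phi_{8a})=1$ is the same fact the paper reads off as the face $u\leq a$ of the $(\phi_7,\phi_{8a},\phi_{8b})$ trapezoid.
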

  \noindent The second sentence comes from an understanding of the algebraic meaning
  of Figure~\ref{pictg1344}.
Associate variables $u$, $a$, and $b$  to $\phi_7$, $\phi_{8a}$, and $\phi_{8b}$
respectively.
  The four sides of the trapezoid $T'_+(\mbox{Aff}_3(\F_2), \phi_7,\phi_{8a},\phi_{8b})$ 
  in the drawn $(u/b,a/b)$ plane correspond to the four faces of the cone 
  $T_+(\mbox{Aff}_3(\F_2), \phi_7,\phi_{8a},\phi_{8b})$ in $(u,a,b)$-space.
  These four faces correspond to the four inequalities on local exponents on the
  left and they translate into divisibility relations among either local or 
  global discriminants on the right:
  \begin{align*}
   u & \leq a  \leq u+b,  &   \fd_{K_7/F} \mid \fd_{K_{8a}/F} \mid \fd_{K_7/F} \fd_{K_{8b}/F}, \\
   u & \leq b  \leq u+a,  &   \fd_{K_7/F} \mid \fd_{K_{8b}/F} \mid \fd_{K_7/F} \fd_{K_{8a}/F}.
  \end{align*}
     For tabulations of all extensions $K_{8a}/F$ with
  $|\fd_{K_{8a}/F}|$ at most some bound $B$, the procedure 
  referred to by the proposition is to look for all $K_{7}/F$ with $|\fd_{K_{7}/F}| \leq B$, 
  take suitable
  square roots to pass from $7T5$ fields to $14T34$ fields, and then 
  use resolvents to obtain the desired $8T48$ fields.   
   
    \subsubsection*{The inertial method for $(S_6,\phi_{6a},\phi_{6b},\phi_{10})$}  
  \label{s6}
  The group $S_6$ has three faithful
  permutation representations of degree at most ten: two sextic ones
  $\rho_{6a}$ and $\rho_{6b}$ interchanged by the outer automorphism of $S_6$, 
  and a decic one $\rho_{10}$ coming from the exceptional
  isomorphism $S_6 \cong PSL_2(\F_9).\gal(\F_9/\F_3) = 10T32 \subset S_{10}$. 
  
  \begin{figure}[htb]
   \[ 
   \begin{array}{c|cccccccccc}   
    & 2A & 2B & 2C & 3A & 3B & 4A & 4B & 5A & 6A & 6B \\
 \hline
P &    222 & 21^4 & 2211 & 33 & 3111 & 42 & 411 & 51 & 6 & 321 \\
   &    21^4 & 222 & 2211 & 3111 & 33 & 42 & 411 & 51 & 321 & 6 \\
 &   2^3 1^4 & 2^3 1^4 & 2^4 1^2 & 3331 & 3331 & 4411 & 442 & 55 & 631 & 631 \\
    \hline
\hat{T} &   6 & 2 & 4 & 6 & 3 & 6 & 4 & 5 & 6 & 5 \\
+ &   2 & 6 & 4 & 3 & 6 & 6 & 4 & 5 & 5 & 6 \\
   &    6 & 6 & 8 & 9 & 9 & 8 & 10 & 10 & 9 & 9 \\
    \hline
 T &   3 & 1 & 2 & 4 & 2 & 4 & 3 & 4 & 5 & 3 \\
\bullet &    1 & 3 & 2 & 2 & 4 & 4 & 3 & 4 & 3 & 5 \\
  &   3 & 3 & 4 & 6 & 6 & 6 & 7 & 8 & 7 & 7 \\
  \multicolumn{11}{c}{\;}
   \end{array}
\]
\includegraphics[width=4in]{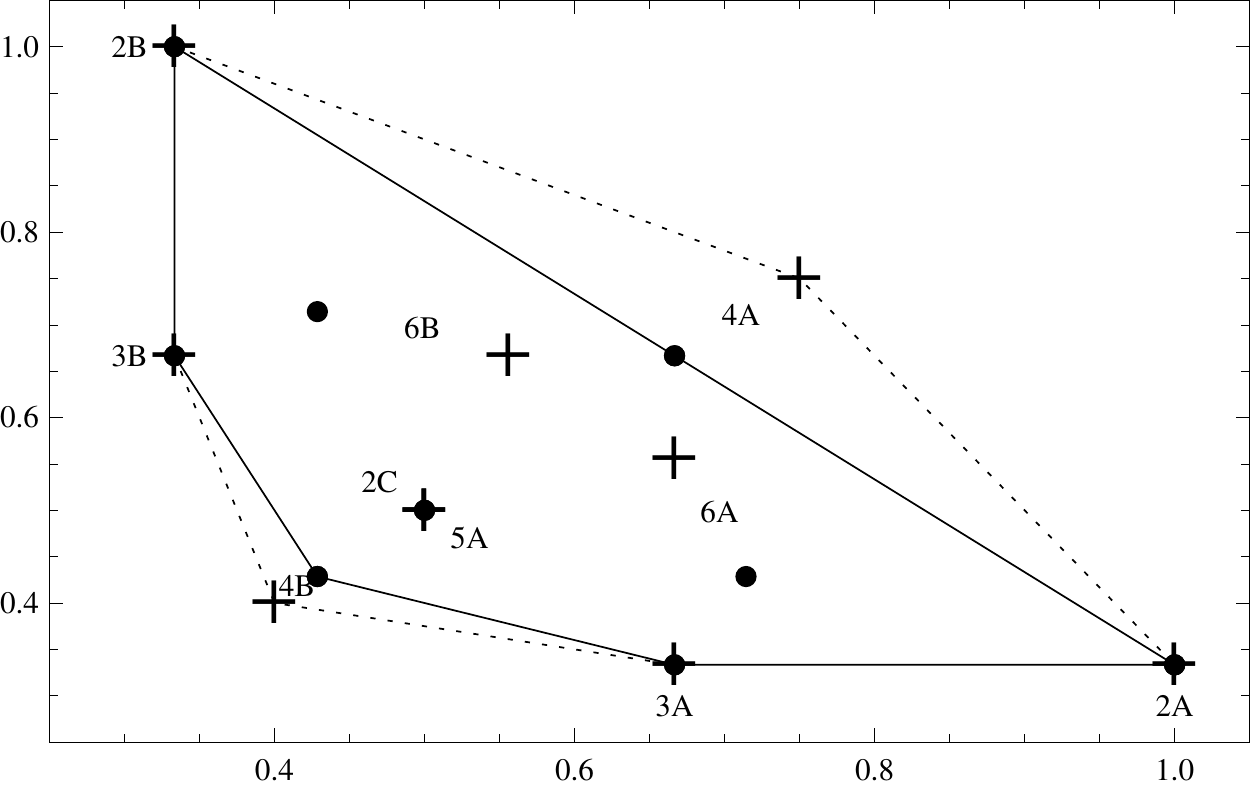}
\caption{\label{picts6}
Top: The partition matrix, broad matrix, and tame matrix
for $(S_6,\phi_{6a},\phi_{6b},\phi_{10})$.   Bottom: the broad hull strictly
containing the tame hull, showing that the broad method does not suffice to prove
the tame-wild principle  for 
$(S_6,\phi_{6a},\phi_{6b},\phi_{10})$.  
}
\end{figure}

  Figure~\ref{picts6} presents our standard analysis of the situation.   Since some $+$'s are outside of the 
  tame hull $T'_+(S_6,\phi_{6a},\phi_{6b},\phi_{10})$, the broad method
  does not suffice for $(S_6,\phi_{6a},\phi_{6b},\phi_{10})$.  
  However after projection to the horizontal axis, the $+$'s 
  are indeed in the convex hull of the $\bullet$'s, so that
  the broad method establishes the tame-wild principle
  for $(S_6,\phi_{6a},\phi_{10})$.   Also the ratios $a/b$ 
  for the $+$ points $(a,b)$ are within the interval $[1/3,3]$
  formed by the ratios for the $\bullet$ points, proving
  the tame-wild principle for $(S_6,\phi_{6a},\phi_{6b})$.
  
  In fact, the tame-wild principle is true for $(S_6,\phi_{6a},\phi_{6b},\phi_{10})$
  as follows.  The only inertial subgroups not covered by previous considerations
  are $I_1=D_4 \times C_2$ and the twin pair  $(I_2,I_3) = (A_4 \times C_2,6T6)$.  
  The orbit partitions in the three cases are $(42,42,442)$, $(42,6,64)$, and $(6,42,64)$.
  The associated conductor vectors are then $(4,4,7)$, $(4,5,8)$, and $(5,4,8)$.
  Their projectivized versions are  $(4/7,4/7)$, $(1/2,5/8)$, and $(5/8,1/2)$.  Since
  these points are visibly in $T'_+(S_6,\phi_{6a},\phi_{6b},\phi_{10})$ 
  the tame-wild principle holds.    We have given this argument to illustrate
  how the inertial method typically applies.  However  in this case 
  the inertial groups $I_2$ and $I_3$ could also have been
  treated by using the techniques from Section~\ref{universalN}, as in 
  fact the universal tame-wild principle holds for $A_4 \times C_2$.

  Summarizing, we have proved the first sentence:
   \begin{prop} The tame-wild principle holds for $(S_6,\phi_{6a},\phi_{6b},\phi_{10})$.
   In particular to find all decic $S_{6}$-extensions with $|\fd_{K_{10}/F}| \leq B$, one
   need only look at sextic $S_6$-extensions with $|\fd_{K_{6a}/F}| \leq B^{2/3}$ 
   and select from among their decic resolvents.
 \end{prop}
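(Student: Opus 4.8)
The plan is to prove the tame-wild principle for $(S_6,\phi_{6a},\phi_{6b},\phi_{10})$ by the inertial method of \S\ref{proofmethods}, working in the concrete setup of \S\ref{calculations} with the partition, tame and broad matrices as in Figure~\ref{picts6}. By the local reduction of \S\ref{abstract} it is enough to verify the tame-wild principle for $(I,V_I)$ for each conjugacy class of inertial subgroup $I\subseteq S_6$, and for the subgroups not already covered by earlier sections it will suffice to show that every projectivized formal conductor vector $c_I'$ — obtained from the column $(\,(a_I,\phi_{6a}),(a_I,\phi_{6b}),(a_I,\phi_{10})\,)$ by dividing through by its faithful last entry — lies in the tame hull $T'_+(S_6,\phi_{6a},\phi_{6b},\phi_{10})$ drawn in Figure~\ref{picts6}.

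First I would dispose of as many inertial subgroups as possible using the previous results. A cyclic $I$ contributes a tame character, and more generally if $I$ is a U-group then $a_I\in T_+(S_6)$ by the computation in the proof of Theorem~\ref{easy-cgroup}; this handles the dihedral, elementary-abelian and Frobenius inertial subgroups, as well as $A_4$. Among the non-U inertial subgroups of $S_6$, those whose order has the shape $pqr$ that actually occur — namely $C_4\times C_2$ (Series~6) and $S_3\times C_3\cong F_{3,2}\times C_3$ (Series~4) — satisfy the universal tame-wild principle by Theorems~\ref{series6thm} and~\ref{pqrnegthm}, and the remaining ``bad'' series of \S\ref{pqr} do not embed in $S_6$ for order reasons (for instance $Q_8$ has minimal faithful permutation degree $8$). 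What survives is exactly $I_1=D_4\times C_2$, the Sylow $2$-subgroup, together with the pair $(I_2,I_3)=(A_4\times C_2,\,6T6)$ of inertial subgroups interchanged by the outer automorphism of $S_6$.

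Next I would carry out the three explicit checks. Reading off orbit partitions under $\rho_{6a},\rho_{6b},\rho_{10}$ gives $(42,42,442)$ for $I_1$, $(42,6,64)$ for $I_2$, and $(6,42,64)$ for $I_3$, hence formal conductor vectors $c_{I_1}=(4,4,7)$, $c_{I_2}=(4,5,8)$, $c_{I_3}=(5,4,8)$, and after dividing by the $\phi_{10}$-coordinate the points $(4/7,4/7)$, $(1/2,5/8)$, $(5/8,1/2)$. It then remains to observe that these three points lie in $T'_+(S_6,\phi_{6a},\phi_{6b},\phi_{10})$: for instance $(4/7,4/7)=\tfrac37(2/3,2/3)+\tfrac47(1/2,1/2)$ is a convex combination of the $4A$- and $2C$-columns, $(5/8,1/2)$ is a convex combination of the $2A$-, $2C$- and $4A$-columns, and $(1/2,5/8)$ then follows by the $\phi_{6a}\leftrightarrow\phi_{6b}$ symmetry of the hull. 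With every inertial column shown to lie in the tame hull, the tame-wild principle holds for each $(I,V_I)$, and hence, by the reduction of \S\ref{abstract}, for $(S_6,\phi_{6a},\phi_{6b},\phi_{10})$.

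The step I expect to be the main obstacle is the second one: certifying that the list of inertial subgroups of $S_6$ is complete and sorting each of them into ``U-group'', ``already-treated N-group series'', ``too large to embed in $S_6$'', or ``one of $I_1,I_2,I_3$'', with due care about the higher-inertia subgroups appearing in the slope filtration of a $D_4\times C_2$- or $A_4\times C_2$-extension. Once that classification is pinned down, only the short convexity computation above remains. Finally, the ``in particular'' sentence is the translation into the language of absolute discriminants of the resulting right-hand divisibility relation between $\fd_{K_{6a}/F}$ and $\fd_{K_{10}/F}$, read off from the right endpoint of $T'_+(S_6,\phi_{6a},\phi_{10})$, combined with the observation that the decic resolvent of a sextic $S_6$-field recovers $K_{10}$.
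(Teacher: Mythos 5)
Your proof of the tame--wild principle for $(S_6,\phi_{6a},\phi_{6b},\phi_{10})$ is correct and essentially the paper's: reduce by the inertial method, sort the inertial subgroups of $S_6$ into ``covered'' and ``remaining,'' land on the same three survivors $D_4\times C_2$, $A_4\times C_2$, $6T6$, compute the same conductor vectors $(4,4,7)$, $(4,5,8)$, $(5,4,8)$, and verify that their projectivizations lie in the tame hull. You are somewhat more explicit than the paper (which just says the points are ``visibly'' in the hull) both in the sorting step and in exhibiting convex combinations; the combinations you give check out.

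The gap is in the final paragraph, where you justify the ``in particular'' clause. You read the bound off the right endpoint of the two-character interval $T'_+(S_6,\phi_{6a},\phi_{10})$. That endpoint is $1$ (attained at the class $2A$, with column ratio $3/3$), so the only conclusion available from that interval is $\fd_{K_{6a}/F}\mid\fd_{K_{10}/F}$, hence $|\fd_{K_{6a}/F}|\leq B$ --- not $B^{2/3}$. The sharper exponent $2/3$ genuinely uses the three-character hull just proved: the long diagonal face of $T'_+(S_6,\phi_{6a},\phi_{6b},\phi_{10})$ gives the inequality $\fd_{K_{6a}/F}\fd_{K_{6b}/F}\mid\fd_{K_{10}/F}^{4/3}$, and since the two sextic twins are interchangeable one may label the one with smaller absolute discriminant as $K_{6a}$, obtaining $|\fd_{K_{6a}/F}|^2\leq|\fd_{K_{6a}/F}|\,|\fd_{K_{6b}/F}|\leq|\fd_{K_{10}/F}|^{4/3}\leq B^{4/3}$ and so $|\fd_{K_{6a}/F}|\leq B^{2/3}$. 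As written, your argument for the application clause would only justify searching sextics up to $B$, losing the improvement the proposition is advertising.
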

  \noindent For the second sentence, note first that the locations of the rightmost and highest points
  of the tame hull in Figure~\ref{picts6} respectively correspond to the equivalent statements
    $\fd_{K_{6a}/F} \mid \fd_{K_{10}/F}$ and $\fd_{K_{6b}/F} \mid \fd_{K_{10}/F}$.  Each of these
  says that to find all decics with absolute discriminant $\leq B$, it suffices to look at all
  sextics up to that bound.   A considerable improvement is to see that the long diagonal boundary
  between them corresponds to $\fd_{K_{6a}/F} \fd_{K_{6b}/F} \mid \fd_{K_{10}/F}^{4/3}$ which implies the
  statement.

\subsubsection*{The broad method for $(W(E_6),\phi_{27},\phi_{36},\phi_{40a},\phi_{40b},\phi_{45})$}   
\label{w6}
As we have seen in \S\ref{left} and 
by the earlier examples of this subsection, 
the broad method
works well in the setting $r=2$.
As $r$ increases, the
difference between $a_\tau$ 
and $\hat{a}_\tau$ becomes more visible, and 
the broad method often fails even when the
tame-wild principle is true, as we just saw for $(S_6,\phi_{6a},\phi_{6b},\phi_{10})$.

A clear illustration of the effectiveness of the 
broad method and its decay with increasing $r$ comes from the Weyl
group $W(E_6)$ of order $51840 = 2^6 3^4 5$ and the 
permutation characters $\phi_{27}$, $\phi_{36}$, $\phi_{40a}$, $\phi_{40b}$,
$\phi_{45}$ corresponding to five maximal subgroups \cite{ATLAS}.  
The broad method immediately shows that the tame-wild principle
for $(W(E_6),\phi_{u},\phi_{v})$ holds for all ten possibilities for 
$\{u,v\}$.  From ten pictures like Figures~\ref{pictg1344} and \ref{picts6}, now quite involved since
$|W(E_6)^{\sharp 0}|=24$, the broad method establishes
the tame-wild principle in exactly four of the ten cases 
$(W(E_6),\phi_u,\phi_v,\phi_w)$ as follows.

\begin{prop} For $\{u,v,w\} = \{27, 36, 40a\}$,
$\{27,40a,40b\}$, $\{36, 40a,40b\}$, and $\{36,40b,45\}$,
the tame-wild principle holds for $(W(E_6),\phi_u,\phi_v,\phi_w)$.
\end{prop}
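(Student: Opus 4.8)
The plan is to prove all four cases uniformly by the broad method of \S\ref{proofmethods}. For a triple $\{u,v,w\}$ put $V=\langle\phi_u,\phi_v,\phi_w\rangle\subseteq\R(W(E_6)^\sharp)$. Since $W(E_6)$ has trivial centre, each of its five permutation characters is faithful, so we may projectivize as in \S\ref{calculations} (with $\phi_w$ in the role of the faithful character $\phi_r$); the assertion to be proved is then simply that every column of the projective broad matrix $\hat T'(W(E_6),\phi_u,\phi_v,\phi_w)$ lies in the convex hull $T'_+(W(E_6),\phi_u,\phi_v,\phi_w)\subseteq\R^2$ of the columns of the projective tame matrix. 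Once this is checked, the hull-level analog of \eqref{mainchain2} collapses to a single hull, so in particular $W'_+$ equals $T'_+$ and the tame-wild principle for $(W(E_6),\phi_u,\phi_v,\phi_w)$ follows.

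The first step is to write out $W(E_6)^{\sharp0}$, whose $24$ classes we organize by the divisibility poset of \S\ref{gsharp}. For the classes of prime order $2$, $3$, and $5$ the projective broad and tame entries already coincide, since $\hat a_\tau=\frac{\bar\tau}{\bar\tau-1}a_\tau$ as noted after \eqref{aandahat}; so only the classes of composite order $4$, $6$, $8$, $9$, $10$, $12$ can possibly fall outside the tame hull, and these are the only columns that need to be examined. Next I would assemble the partition matrix $P(W(E_6),\phi_{27},\phi_{36},\phi_{40a},\phi_{40b},\phi_{45})$, recording for each class $\tau$ the cycle type $\lambda_\tau(\phi)$ of a representative of $\tau$ in each of the five faithful coset actions of degrees $27$, $36$, $40$, $40$, $45$; here $\phi_{40a}$ and $\phi_{40b}$, though of equal degree, are distinct --- a fraternal twin pair in the sense of \S\ref{aff3}. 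These cycle types are read off from the ATLAS data for $W(E_6)$ \cite{ATLAS}, and can be cross-checked, exactly as in the $S_5$ example of \S\ref{introtamewild}, by factoring resolvent polynomials of an explicit $W(E_6)$ number field modulo small primes. Passing entrywise to the tame matrix $T$ (degree minus number of parts) and the broad matrix $\hat T$ (degree minus number of ones), then for each triple selecting the three relevant rows and projectivizing, produces for each of the four triples a planar configuration of ``$\bullet$'' points (tame columns) and ``$+$'' points (broad columns) of exactly the kind drawn in Figures~\ref{pictg1344} and \ref{picts6}.

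The verification itself is purely finite: for each of the four triples and each composite-order $\tau$ one exhibits a two-element subset $J\subseteq W(E_6)^{\sharp0}$ whose minor $T'(J)$ is invertible and for which $T'(J)^{-1}\hat T'_\tau$ has nonnegative entries, in the coordinates of \S\ref{calculations}; equivalently, one confirms that each $+$ point lies in the convex hull of the $\bullet$ points. For the other six of the ten triples one instead pinpoints a composite-order class whose $+$ point escapes the tame hull, which is why the broad method --- and hence the proof outlined here --- applies to exactly these four; establishing or refuting the tame-wild principle for the remaining six would require the inertial method together with an analysis of the inertial N-subgroups of $W(E_6)$, which we do not attempt.

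The work here is logistical rather than conceptual, and the main obstacle is precisely the scale and delicacy of the bookkeeping. With $|W(E_6)^{\sharp0}|=24$ the partition matrix is large, and because the broad method succeeds for only four of the ten triples, the inclusions $\hat T'_\tau\in T'_+$ are tight: a single transcription slip in one cycle type would break a convexity test. Pinning down $P(W(E_6),\phi_{27},\phi_{36},\phi_{40a},\phi_{40b},\phi_{45})$ exactly, and independently confirming it against an explicit $W(E_6)$ field and its resolvents, is therefore the real content; the four convexity checks that follow are routine.
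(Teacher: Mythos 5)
Your proposal is correct and is essentially the paper's own argument: the paper proves the result by the broad method, constructing the projective tame and broad hulls for each triple and checking that the broad hull equals the tame hull for exactly these four of the ten triples. Your observation that only composite-order classes need to be examined (since $\hat a_\tau$ is a positive multiple of $a_\tau$ for $\tau$ of prime order) is the same reduction implicit in the paper's application of the method.
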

\noindent Pursuing this situation further with the inertial method would be harder, 
 because $W(E_6)$ has many 2-inertial and 3-inertial 
subgroups.

\subsection{Best counterexamples}
\label{bestcounter}
 
   Let $G$ be a group for which the universal tame-wild principle fails.  
Then there exists a vector $v \in \Q(G^{\sharp})$ for
which the tame-wild principle fails for $(G,\langle v \rangle)$.  There are infinitely
many solutions to  $\phi_1-\phi_2 \in \langle v \rangle$ with 
the $\phi_i$ permutation characters.  So any
failure of the universal tame-wild principle 
can be converted to a failure in the setting $(G,\phi_1,\phi_2)$
of the introduction.   By switching $\phi_1$ and $\phi_2$ if necessary,
it can be converted to a failure of the left tame-wild principle for
$(G,\phi_1,\phi_2)$.  

   However these counterexamples are not guaranteed to have immediate bearing on our applications to tabulating
number fields.  All that is asserted by the failure of the principle for 
$(G,\phi_1,\phi_2)$ is that there exists a pair of local extensions
$(K_1,K_2)/F$ of the given type with
\begin{equation}
\label{divisible}
\fd_{K_2/F}^{\alpha(G,\phi_1,\phi_2)} \mid \fd_{K_1/F} 
\end{equation}
not holding. 
More directly relevant would be  global counterexamples with the $\phi_i$ both coming from
faithful transitive permutation representations and
the extensions $K_i/F$ full in the sense of each having Galois group 
$\gal(K^{\rm gal}/F)$ all of $G$. More demanding still 
is to ask for counterexamples of this sort with $F=\Q$.  
Finally, one can seek examples for which even
the weaker numerical statement
\begin{equation}
\label{lessthan}
|\fd_{K_2/\Q}|^{\alpha(G,\phi_1,\phi_2)} \leq |\fd_{K_1/\Q}|
\end{equation}
fails.  Examples of this explicit nature often do not exist
for a given $G$, and even when they exist they can be 
hard to find.   The rest of this subsection discusses the construction of
global counterexamples built from one of the two local counterexamples
with $I = Q_8$ with slope-content $[2,2,5/2]^2$ from \eqref{q8counters}. 
There are several points of contact with \S\ref{elusive}, but
here we find counterexamples  to \eqref{divisible} over $\Q$.  

\subsubsection*{Inadequacy of $G = \hat{Q}_8$ as a source of global counterexamples} 
The group $\hat{Q}_8$ itself is not a source of global counterexamples of the 
sort we seek because it has only two transitive faithful permutation characters
and the tame-wild principle holds for the corresponding type $(\hat{Q}_8,\phi_8,\phi_{16})$.  To
illustrate the best that can be done with this group, take  
\begin{equation}
\label{intrans}
\begin{array}{r | lrlr}
n & f_n(x) & D_n & G_n & |G_n| \\
\hline
8 &                  x^8+6 x^4-3 & - 2^{16} 3^7 & \hat{Q}_8 & 16 \\
4&                  x^4 +  6 x^2 -3 & -2^6 3^3 & D_4 & 8 \\
2&                  x^2+3   & -3 & C_2 & 2 
\end{array}
\end{equation}
The global and $2$-adic Galois groups of $f_8(x)$ agree, and so 
one has this agreement for the resolvents $f_4(x)$ and $f_2(x)$ as well.
The Galois groups $G_n$ and the field discriminants $D_n$ are as
indicated.   The fields $K_n = \Q[x]/f_n(x)$ belong to transitive characters
$\phi_8$, $\phi_4$, and $\phi_2$ of $\hat{Q}_8$.

  \begin{figure}[htb]
\includegraphics[width=4.5in]{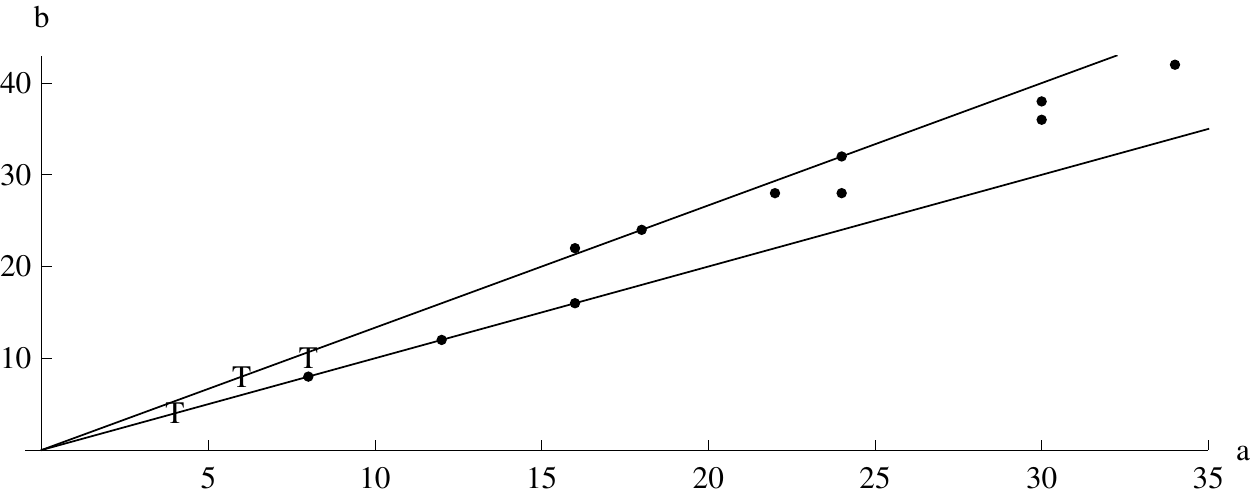}
\caption{\label{8t8pict}  An analog of the introductory Figure~\ref{fivetosix} for the type
$(\hat{Q}_8,\phi_8+\phi_2,\phi_8+\phi_4)$.  The points are exactly
all the possibilities for exponent pairs $(a_2,b_2)$ from wild
$2$-adic ramification over $\Q_2$, and $(16,22)$ is just outside
the tame cone.   }
\end{figure}

Figure~\ref{8t8pict} is an analog of Figure~\ref{fivetosix}, but now for 
$(\hat{Q}_{8},\phi_8+\phi_2,\phi_8+\phi_4)$.    
The algebra pair $(K_8 \times K_2,K_8 \times K_4)/\Q$ yields
the exponent pair $(a_2,b_2) = (16,22)$ which is just outside the
tame cone.   So this pair of algebras indeed 
contradicts \eqref{divisible}, but we are seeking 
counterexamples among pairs of fields.

\subsubsection*{Failure of the inertial method for $(M_{12},\phi_{12a},\phi_{12b})$} 
To get a better global counterexample corresponding to 
the same local counterexample, we need to 
replace $\hat{Q}_8$ by larger groups $G$ containing it.  
An initial key observation is that 
the quaternion group $Q_8$ is the four-point 
stabilizer of the Mathieu group $M_{12} \subset S_{12}$
of order $12 \cdot 11 \cdot 10 \cdot 9 \cdot 8$ in 
its natural action, and also one 
has $Q_8 \subset \hat{Q}_8 \subset M_{12}$.  
On the one hand,  the
given character $\phi_{12a}$ of the Mathieu group
has  decomposition $\phi_8 + \phi_2 + 2$ 
when restricted to $\hat{Q}_8$.   On the other hand, 
there is a twin dodecic character $\phi_{12b}$ coming
from the outer involution of $M_{12}$;  its
restriction to $\hat{Q}_8$ decomposes as $\phi_8+\phi_4$.  

Further group-theoretic facts are necessary
for this situation to give number fields as desired.  
First, the partition matrix and projective
tame matrix of $(M_{12},\phi_{12a},\phi_{12b})$ are as 
follows:
{\renewcommand{\arraycolsep}{2pt}
\[
   \begin{array}{c|ccccccccccccc|c}
\tau &   2A & 2B & 3A & 3B & 4A & 4B & 5A & 6A & 6B & 8A & 8B & 10A & 11AB & Q_8 \\
    \hline
\lambda_\tau(\phi_{12a})   &  2^6 & 2^41^4 & 3^31^3 & 3^4 & 4^22^2 & 4^21^4 & 5^21^2 & 6^2 & 6321 & 84
      & 821^2 & (10)2 & (11)1 & 81^4 \\
\lambda_\tau(\phi_{12b})   &  
 2^6 & 2^41^4 & 3^31^3 & 3^4 & 4^21^4 & 4^22^2  & 5^21^2 & 6^2 & 6321 & 821^2 & 84
      & (10)2 & (11)1 & 84 \\
\hline
c_\tau' &    1 & 1 & 1 & 1 & 8/6 & 6/8 & 1 & 1 & 1 & 10/8 & 8/10 & 1 & 1 & 7/10 \\
   \end{array}
\]
}

\noindent Thus, in the language introduced in \S\ref{tauandI}, 
extensions $(K_{12a},K_{12b})/F$ of full type 
$(M_{12},\phi_{12a},\phi_{12b})$ are 
fraternal twins, this being necessary for
our purposes.  But they are near-identical in 
the sense that the interval $[\alpha,\omega]$
is small, being $[3/4,4/3] = [0.75,1.\overline{33}]$ here, rather 
than the intervals $[1/2,2]$ and $[1/3,3]$ seen in \S\ref{tauandI} for
$\mbox{Aff}_3(\F_2)$-twins and $S_6$-twins respectively.  The 
orbit partitions of $Q_8$ are as indicated
above, yielding $c_{Q_8}(\phi_{12a})/c_{Q_8}(\phi_{12b}) = 7/10 = 0.70$ 
which is outside the interval $[0.75,1.\overline{33}]$.   
Thus the inertial method for proving the tame-wild principle
fails here.

\subsubsection*{Failure of the tame-wild principle for $(M_{12},\phi_{12a},\phi_{12b})$}  
Computing with the slopes $[2,2,2.5]^2$, the $8$'s in the last column above
give discriminant exponent $3 \cdot 2 + 4 \cdot 2.5 = 16$ while the $4$ gives the 
discriminant exponent $3 \cdot 2 = 6$.   So the ratio of wild conductors is 
$16/22 \cong 0.\overline{72}$ which is still outside the interval $[0.75,1.\overline{33}]$. 
Thus the tame-wild principle itself fails  for  $(M_{12},\phi_{12a},\phi_{12b})$.

\subsubsection*{Smaller groups}  We have looked in several places, including the two-parameter family
of \cite{MalMP}, for twin pairs $(K_{12a},K_{12b})$ of $M_{12}$ fields with the
needed quaternionic $2$-adic behavior.   We did not find any, and so we consider
smaller groups as follows as potential sources of counterexamples:
\[
\begin{array}{ccccccccccccc}
             &               & \mbox{Aff}_2(\F_3) && P\Gamma L_2(\F_9) \\
             &               & \rotatebox{270}{$\!\!\! \cong$} && \rotatebox{270}{$\!\!\! \cong$} \\
\hat{Q}_8 & \subset & M_9.2 & \subset & M_{10}.2 \\
\cup && \cup && \cup &&&&&&&&\\
Q_8& \subset & M_9 & \subset & \fbox{$M_{10}$}  & \subset & \fbox{$M_{11}$} & \subset & \fbox{$M_{12}$} &
 \supset & \fbox{$T$}  & \supset & P.  
\end{array}
\]  
The four groups in the middle are boxed to emphasize that they appear in Proposition~\ref{counter} below.

Proceeding from $M_{12}$ to the left,  the groups $M_{11}$ and $M_{10}$ contain $\hat{Q}_8$, 
since $\hat{Q}_8$ has orbit partition $8211$.  Thus, using
  $0.\overline{72} \not \in [0.75,1.\overline{33}]$ exactly as above, 
 the tame-wild principle fails also for $(M_{11},\phi_{11},\phi_{12b})$ and 
 $(M_{10},\phi_{10},\phi_{12b})$.   Here the transitive permutation groups in question 
 are respectively $(11T6,12T272)$ and $(10T31,12T181)$.    The analog of Figure~\ref{8t8pict}
 for $M_{10}$ and $M_{11}$ has the same tame cone, but more dots.  For $M_{12}$ there are many
 more dots, and a symmetry appears with the cone doubling so that its bounding lines have slope
$3/4$ and $4/3$ rather than $1$ and $4/3$.  
 
 Moving further leftward to $M_9$ and $Q_8$ relates our current discussion 
to  our earlier counterexamples.  For $M_9$, the
 transitive groups are $(9T14,12T47)$.  However now $\hat{Q}_8$ is not contained
 in $M_9$ and so we do not have counterexamples over $\Q$.  However the counterexample
 for $(M_9,\phi_{12b},\phi_{72})$ over $\Q(\sqrt{-3})$ from \S\ref{elusive} also gives a counterexample for 
 $(M_9,\phi_9,\phi_{12b})$, as always because the projectivized wild Artin conductor
 $0.\overline{72}$ is not in the tame hull $[0.75,1.\overline{33}]$.   Finally for
 $\hat{Q}_8$ itself we recover \eqref{intrans}, now interpreted as an intransitive counterexample for 
 $(Q_8,\phi_{8},\phi_8+\phi_4)$ over
 $\Q(\sqrt{-3})$.  
 
 The extended groups $M_9.2$ and $M_{10}.2$ corresponding to the pairs $(9T19,12T84)$
  and $(10T35,12T220)$ are natural candidates to support examples over $\Q$ because
  they contain $\hat{Q}_8$.  However they have orbit partitions
   $921$ and $(10)2$ as subgroups of $M_{12}$.    Computation in the column 
   headed by $8B$ then has to be adjusted, with the $2$ in $821^2$ removed.  The
   conductor ratio is then $7/10$ rather than $8/10$ and in fact the inertial method above works to prove 
   the tame-wild principle for $(M_{9}.2,\phi_9,\phi_{12b})$ and $(M_{10}.2,\phi_{10},\phi_{12b})$.
   This phenomenon illustrates the fundamental difficulty in promoting local
   non-transitive counterexamples to global transitive ones with a larger group.
   While wild Artin conductor ratios, here $0.\overline{72}$ stay the same, tame hulls
   increase, here from $[0.75,1]$ for $\hat{Q}_8$ itself to $[7/10,7/6] = [0.70,1.1\overline{6}]$
   for $M_{10}.2$.  
   
   There are other good candidates for global Galois groups.   The  
   $2$-Sylow subgroup $P$ of $M_{12}$ of order $2^6$ 
   is not good for us, because neither
   the given orbit decomposition nor its twin is transitive, both having orbit partition $84$.  
   However an overgroup $T$ of order $2^6 \cdot 3 = 192$ is good, with
   $(\phi_{12a},\phi_{12b})$ remaining a fraternal pair of type $(12T112,12T112)$.
   Our computations have shown that the tame-wild principle fails for 
   $(12T112,\phi_{12a},\phi_{12b})$.   
   
 \subsubsection*{Number fields}   Constructing number fields with nonsolvable Galois groups 
 and prescribed ramification remains a difficult problem despite the increasing attention
  it has been receiving recently.    Just as we have not found $M_{12}$ fields with the
  appropriate quaternionic ramification, we have also not found $M_{11}$ 
  or $M_{10}$ fields.  
  
  In contrast, it is relatively easy to build solvable fields step by step, and we have found
  many explicit pairs $(K_{12a},K_{12b})/\Q$ providing counterexamples
  to the tame-wild principle for  $(12T112,\phi_{12a},\phi_{12b})$.  One such, with 
  tame ramification at the prime number $q=277$, is 
\begin{eqnarray*}
f_{12a}(x) & = & x^{12}+223 x^{10}+14856 x^8+1784 q x^6 +38160 q x^4 +1712 q^2 x^2 +9216 q^2 , \\
f_{12b}(x) & = &  x^{12}+202 x^8+49 q x^4 +4 q^2 .
\end{eqnarray*}
The discriminants are $D_{12a} = 2^{16} 277^8$ and $D_{12b} = 2^{22} {277}^6$, 
with the tame prime $277$ having ramification partitions $\mu_{12a} = 4^22^2$ 
and $\mu_{12b} = 4^21^4$. 

By design, $\fd_{K_{12b}}^{0.75}  \nmid  \fd_{K_{12a}}$.  However the tame ramification at $277$
completely overwhelms the wild ramification at $2$ in terms of magnitudes,  and easily $|\fd_{K_{12b}}|^{.75} \leq  |\fd_{K_{12a}}|$.
 Indeed $|\fd_{K_{12b}}|^{1.15} \approx  |\fd_{K_{12a}}|$.    To improve upon the counterexample
 $(f_{12a},f_{12b})$, one would like examples with $D_{12a} = 2^{16} (p_1 \cdots p_k)^6$
 and $D_{12b} = 2^{22} (p_1 \dots p_k)^8$ so that \eqref{lessthan} is contradicted as well.
 However no such counterexamples exist with $G = 12T112$, as the subgroup $Q_8$ together
 with all elements of type $4^21^4$ generate an index two subgroup of type $12T63$ and
 this subgroup does not contain $\hat{Q}_8$.     Partially summarizing:
\begin{prop}  \label{counter} 
The tame-wild principle for $(G,\phi_{12a},\phi_{12b})$ fails for the groups
$G=M_{12}$, $M_{11}$, $M_{10}$ and $12T112$.  For $G = 12T112$ the pair of 
number fields $(K_{12a},K_{12b})$ contradicts the divisibility statement
\eqref{divisible}, but no pair with $G=12T112$ contradicts the numerical statement \eqref{lessthan}.
\end{prop}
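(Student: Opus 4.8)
The plan is to assemble the whole proposition from a single local input together with the partition data already displayed in this subsection. The local input is the extension $\tilde L/\Q_2$ recorded in \eqref{q8counters} with inertia group $Q_8$, decomposition group $\hat{Q}_8$, and slope content $[2,2,5/2]^2$. First I would record, using the conversion already exhibited above ($3\cdot 2+4\cdot\frac{5}{2}=16$ for an $8$-cycle with this slope content, $3\cdot 2=6$ for a $4$-cycle), that in the $Q_8$ column of the partition matrix of $(M_{12},\phi_{12a},\phi_{12b})$ the partitions are $81^4$ and $84$, so that $\tilde L$ produces the local conductor pair $(16,22)$, with projectivized value $16/22=0.\overline{72}$. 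On the tame side, the projectivized tame hull of $(M_{12},\phi_{12a},\phi_{12b})$ is $[\alpha,\omega]=[3/4,4/3]$, the endpoints being attained at the classes $4B$ (partitions $4^21^4$, $4^22^2$) and $4A$ (partitions $4^22^2$, $4^21^4$); in particular $\alpha(G,\phi_{12a},\phi_{12b})=3/4$ for each of the four groups $G$ in the statement.

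Given these two numbers, the failure of the tame-wild principle for $M_{12}$, $M_{11}$, $M_{10}$, and $12T112$ is uniform. Each of these groups contains $Q_8\subset\hat{Q}_8$, so $\tilde L/\Q_2$ occurs as the completion at a suitable prime of some global $G$-extension; reading the orbit partitions off the restriction of $(\phi_{12a},\phi_{12b})$ gives the wild conductor ratio $16/22=0.\overline{72}$ in each case (for $M_{11}$ and $M_{10}$ one uses the computed fact that the relevant degree-two subextension is unramified, so the ratio is again $16/22$). Since $16/22<3/4=\alpha(G,\phi_{12a},\phi_{12b})$, this violates \eqref{divisible}, hence the left tame-wild principle, hence the tame-wild principle for $(G,\phi_{12a},\phi_{12b})$.

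For the explicit counterexample over $\Q$ with $G=12T112$ I would take the displayed pair $f_{12a},f_{12b}$ with $q=277$ and verify four things: that $\gal(K^{\rm gal}/\Q)\cong 12T112$ with $(K_{12a},K_{12b})$ a fraternal twin pair of type $(\phi_{12a},\phi_{12b})$ (a finite check on the resolvent construction); that $2$ is wild of the $\hat{Q}_8$, $[2,2,5/2]^2$ type, contributing $2^{16}$ to $D_{12a}$ and $2^{22}$ to $D_{12b}$; that $277$ is tame with ramification partitions $4^22^2$ and $4^21^4$, contributing $277^8$ and $277^6$; and that no other prime ramifies. Then $D_{12a}=2^{16}277^8$ and $D_{12b}=2^{22}277^6$, so $\fd_{K_{12b}}^{3/4}=2^{33/2}277^{9/2}$, which does not divide $2^{16}277^8=\fd_{K_{12a}}$ because the exponent of $2$ exceeds $16$; this is the desired counterexample to \eqref{divisible}. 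On the other hand $277^{7/2}>2^{1/2}$, so $|\fd_{K_{12b}}|^{3/4}<|\fd_{K_{12a}}|$ and \eqref{lessthan} still holds for this pair.

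It remains to show that no $12T112$-pair $(K_{12a},K_{12b})/\Q$ contradicts \eqref{lessthan}. Writing $|\fd_{K_{12b}}|^{3/4}/|\fd_{K_{12a}}|=\prod_p p^{\frac{3}{4}c_p(K_{12b})-c_p(K_{12a})}$, I would show that the exponent is $\le 0$ at every tamely ramified prime (its conductor vector lies in the projectivized tame hull $[3/4,4/3]$, so the ratio $c_p(K_{12a})/c_p(K_{12b})\ge 3/4$), with equality only when the prime is of class $4B$; that the exponent is $\le 0$ at primes above $3$ (the partition-matrix columns at all classes of $3$-power order agree for $\phi_{12a}$ and $\phi_{12b}$, and a finite check of the $3$-inertial subgroups of $12T112$ disposes of the tame part of any wild $3$-adic inertia); and that at $p=2$ the exponent is at most $\frac{3}{4}\cdot 22-16=\frac{1}{2}$. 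A genuine counterexample would therefore force $D_{12a}=2^{16}m^6$ and $D_{12b}=2^{22}m^8$ with $m$ squarefree and odd, i.e.\ all odd ramification of class $4B$ (cycle type $4^21^4$) and the $2$-adic decomposition group equal to $\hat{Q}_8$. But $\Q$ has no unramified extensions, so the inertia subgroups generate $12T112$, and hence $12T112$ would be generated by $Q_8$ together with elements of cycle type $4^21^4$; that subgroup is the index-two subgroup $12T63$, which moreover does not contain $\hat{Q}_8$ --- a contradiction. I expect this last step to be the main obstacle: before the clean group-theoretic fact $\langle Q_8,\,4^21^4\rangle=12T63\not\supseteq\hat{Q}_8$ can be brought to bear, one must carefully close off every other source of a positive contribution, in particular wild ramification at $3$ and every tame class other than $4B$. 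The remaining steps are routine discriminant and factorization computations, or direct appeals to \eqref{q8counters} and the displayed partition matrix.
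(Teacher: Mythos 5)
Your approach is essentially the same as the paper's. The paper's proof of this proposition is distributed across \S\ref{bestcounter}: the failure of the tame-wild principle for $M_{12}$ (and by the same token $M_{11}$, $M_{10}$, $12T112$, since each contains $\hat{Q}_8$) is argued exactly as you do, by computing the wild conductor ratio $16/22 = 0.\overline{72}$ from the $[2,2,5/2]^2$ quaternionic local datum and comparing against the tame hull $[3/4,4/3]$; the explicit pair $(f_{12a},f_{12b})$ at $q=277$ is the paper's own example; and the obstruction to contradicting \eqref{lessthan} is precisely your closing group-theoretic fact $\langle Q_8,\,4^21^4\rangle = 12T63 \not\supseteq \hat{Q}_8$ combined with $\Q$ having no unramified extensions.

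One honest comparison worth recording: for the final clause (``no $12T112$ pair contradicts \eqref{lessthan}''), the paper is terser than you are. It identifies the ``ideal'' shape $D_{12a}=2^{16}m^6$, $D_{12b}=2^{22}m^8$ that a putative counterexample would have to take, and then rules that out group-theoretically, leaning on ``our computations have shown'' for the reduction to this shape. You attempt to make that reduction explicit by bounding the $p$-local exponents of $|\fd_{K_{12b}}|^{3/4}/|\fd_{K_{12a}}|$: $\le 0$ at tame primes with equality only at class $4B$ (correct, immediate from the tame hull), $\le 0$ at $p=3$ (correct in spirit; a cleaner justification is that all $3$-inertial subgroups of $12T112$ are U-groups, so their Artin characters lie in the tame cone and hence project into $[3/4,4/3]$), and $\le 1/2$ at $p=2$. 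That last bound is the one genuinely unverified claim in your write-up — it does not appear in the paper and you offer no derivation — and it is exactly the kind of $2$-inertial computation the paper is summarizing when it says ``our computations have shown.'' You flag this as the place where work remains, which is the right self-assessment; nothing in your plan would fail, but that step is not yet a proof.
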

The group-theoretic argument for $12T112$ does not apply to the three $M_n$ and
we expect that there exist pairs $(K_{12a},K_{12b})$ for them contradicting not only
\eqref{divisible} but also  
\eqref{lessthan}.    In general, a closer analysis of the exact range of 
applicability of the tame-wild principle would be interesting.

\bibliographystyle{amsalpha}
\bibliography{jr}

\end{document}